\newcommand{\eqref}[1]{(\ref{#1})}
\newcommand{\rrvert}{\vert}
\newcommand{\rrVert}{\Vert}
\newcommand{\llvert}{\vert}
\newcommand{\llVert}{\Vert}
\def\pi{\uppi}
\def\mathds{\mathbh}
\newtheorem{thmm}{Theorem}[section]
\newtheorem{lem}[thmm]{Lemma}
\newtheorem{pro}[thmm]{Proposition}
\newcommand{\bb}{\mathbb}
\newcommand{\eu}{\EuScript}
\newcommand{\Scr}{\mathscr}
\newcommand{\Cal}{\mathcal}
\newcommand{\fr}{\mathfrak}
\newtheorem{appxthm}{Theorem}[section]
\newtheorem{appxlem}[appxthm]{Lemma}
\newtheorem{appxpro}[appxthm]{Proposition}
\begin{document}
\begin{frontmatter}

\title{On the optimal estimation of probability measures in weak and
strong topologies}
\runtitle{On the optimal estimation of a probability measure}

\begin{aug}
\author[A]{\inits{B.}\fnms{Bharath}~\snm{Sriperumbudur}\corref{}\ead[label=e1]{bks18@psu.edu}}
\address[A]{Department of Statistics,
Pennsylvania State University, University Park, PA 16802, USA.\\ \printead{e1}}
\end{aug}

%
\received{\smonth{7} \syear{2014}}
%
\revised{\smonth{2} \syear{2015}}

%
\begin{abstract}
Given random samples drawn i.i.d.~from a probability measure $\mathbb{P}$
(defined on say, $\mathbb{R}^d$), it is well-known that the empirical
estimator is
an
optimal estimator of $\mathbb{P}$ in weak topology but not even a consistent
estimator of its density (if it exists) in the strong topology (induced
by the
total variation distance). On the other
hand, various popular density estimators such as kernel and wavelet
density estimators are optimal in the strong topology in the sense of achieving
the minimax rate over all estimators for a Sobolev ball of densities.
Recently, it has been shown in a series of papers by Gin\'{e} and Nickl that
these density estimators on $\mathbb{R}$ that are optimal in strong
topology are
also optimal in
$\|\cdot\|_\mathcal{F}$ for certain choices of $\mathcal{F}$ such that
$\|\cdot\|_\mathcal{F}$ metrizes the weak topology, where
$\|\mathbb{P}\|_\mathcal{F}:=\sup\{\int f \,\mathrm{d}\mathbb{P}\dvt f\in\mathcal{F}\}$. In this paper,
we investigate this problem of optimal estimation in weak and strong topologies
by choosing $\mathcal{F}$ to be a unit ball in a reproducing kernel Hilbert space
(say $\mathcal{F}_H$ defined over $\mathbb{R}^d$), where this choice is both of
theoretical
and computational interest. Under some mild conditions on the
reproducing kernel, we show that $\|\cdot\|_{\mathcal{F}_H}$ metrizes
the weak
topology and the kernel density estimator (with $L^1$ optimal bandwidth)
estimates $\mathbb{P}$ at dimension independent optimal rate of $n^{-1/2}$ in
$\|\cdot\|_{\mathcal{F}_H}$ along with providing a uniform central limit
theorem for the kernel density estimator.
\end{abstract}

%
\begin{keyword}
\kwd{adaptive estimation}
\kwd{bounded Lipschitz metric}
\kwd{exponential inequality}
\kwd{kernel density estimator}
\kwd{Rademacher chaos}
\kwd{reproducing kernel Hilbert space}
\kwd{smoothed empirical processes}
\kwd{total variation distance}
\kwd{two-sample test}
\kwd{uniform central limit theorem}
\kwd{U-processes}
\end{keyword}
\end{frontmatter}

\section{Introduction}\label{Sec:intro}
Let $X_1,\ldots,X_n$ be independent random variables distributed
according to a
Borel probability measure $\bb{P}$ defined on a separable metric
space $\Cal{X}$ with $\bb{P}_n:=\frac{1}{n}\sum^n_{i=1}\delta_{X_i}$
being the
empirical measure induced by them. It is
well known that $\bb{P}_n$ is a consistent estimator of
$\bb{P}$ in weak
sense as $n\rightarrow\infty$, that is, for every bounded continuous real-valued
function $f$ on $\Cal{X}$, $\int f \,\mathrm{d}\bb{P}_n\stackrel{\mathrm{a.s.}}{\rightarrow
} \int
f \,\mathrm{d}\bb{P}$ as
$n\rightarrow\infty$, written as $\bb{P}_n\leadsto\bb{P}$. In
fact, if nothing is known about $\bb{P}$, then $\bb{P}_n$ is probably
the most appropriate
estimator to use as it is asymptotically efficient and minimax in
the
sense of van der Vaart \cite{Vaart-98}, Theorem~25.21, equation (25.22); also see Example~25.24. In addition,
for any Donsker class of
functions, $\Cal{F}$, $\Vert
\bb{P}_n-\bb{P}\Vert_\Cal{F}=\mathrm{O}_{\bb{P}}(n^{-1/2})$, where
\[
\Vert\bb{P}_n-\bb{P}\Vert_\Cal{F}:=\sup
_{f\in\Cal{F}}\biggl\llvert \int f \,\mathrm{d}\bb{P}_n-\int f \,\mathrm{d}\bb{P}
\biggr\rrvert ,
\]
that is, $\bb{P}_n-\bb{P}$ is asymptotically of the order of $n^{-1/2}$
uniformly
in $\Cal{F}$ and the processes $f\mapsto\sqrt{n}\int
f \,\mathrm{d}(\bb{P}_n-\bb{P}), f\in\Cal{F}$
converge in law to a Gaussian process in $\ell^\infty(\Cal{F})$, called the
$\bb{P}$-Brownian bridge indexed by $\Cal{F}$,
where $\ell^\infty(\Cal{F})$ denotes the Banach space of bounded real-valued
functions on $\Cal{F}$.
On the other hand, if $\bb{P}$ has a density $p$ with
respect to Lebesgue measure (assuming $\Cal{X}=\bb{R}^d$), then $\bb{P}_n$,
which is a random atomic measure, is not appropriate to estimate~$p$. However,
various estimators, $p_n$ have been proposed in literature to estimate
$p$, the
popular ones being the kernel density estimator and wavelet estimator, which
under suitable conditions have been shown to be optimal with respect to the
$L^r$ loss
($1\le r\le\infty$) in the sense of achieving the minimax rate over all
estimators for densities in certain classes
(Devroye and Gy{\"{o}}rfi \cite{Devroye-85}, Hardle \textit{et al.} \cite
{Hardle-98}, van der Vaart \cite{Vaart-98}). Therefore, depending on
whether $\bb{P}$
has a density or not, there are two different estimators (i.e., $\bb
{P}_n$ and
$p_n$) that are optimal in two different performance measures, that is,
$\Vert\cdot\Vert_\Cal{F}$ and $L^r$. While $\bb{P}_n$ is not adequate to
estimate $p$, the question arises as to whether
$\bb{P}^\star_n$ defined as $\bb{P}^\star_n(A):=\int_A p_n(x) \,\mathrm{d}x$ for every
Borel set $A\subset\bb{R}^d$, estimates $\bb{P}$
as good as $\bb{P}_n$ in the sense that $\Vert
\bb{P}^\star_n-\bb{P}\Vert_\Cal{F}=\mathrm{O}_{\bb{P}}(n^{-1/2})$, that is,
%
\begin{equation}
\sup_{f \in\Cal{F}}\biggl\llvert \int f(x)p_n(x) \,\mathrm{d}x-\int
f(x)p(x) \,\mathrm{d}x\biggr\rrvert =\mathrm{O}_{\bb{P}}\bigl(n^{-1/2}
\bigr),\label{Eq:smooth}
\end{equation}
and whether the processes $f\mapsto\sqrt{n}\int
f(x)(p_n-p)(x) \,\mathrm{d}x, f\in\Cal{F}$ converge in law to a Gaussian process in
$\ell^\infty(\Cal{F})$ for $\bb{P}$-Donsker class, $\Cal{F}$. If $p_n$
satisfies these properties, then it is a \emph{plug-in} estimator in
the sense
of Bickel and Ritov \cite{Bickel-03}, Definition~4.1, as it is simultaneously
optimal in two different performance measures. The question of whether
(\ref{Eq:smooth}) holds has been addressed for the kernel density estimator
(Yukich \cite{Yukich-92}, van der Vaart \cite{Vaart-94}, Gin{\'{e}} and
Nickl \cite{Gine-08a}) and wavelet density estimator
(Gin{\'{e}} and Nickl \cite{Gine-09b})
where a uniform central limit theorem as stated above has been proved for
various $\bb{P}$-Donsker classes, $\Cal{F}$ (and also for non-Donsker but
pre-Gaussian classes in Radulovi{\'{c}} and
Wegkamp \cite{Radulovic-00} and Gin{\'{e}} and Nickl \cite{Gine-08a},
Section~4.2).
For a $\bb{P}$-Donsker class $\Cal{F}$, it easy to show
that (\ref{Eq:smooth})
and the corresponding uniform central limit theorem (UCLT) hold if
$\Vert\bb{P}^\star_n-\bb{P}_n\Vert_\Cal{F}=\mathrm{o}_{\bb{P}}(n^{-1/2})$, that is,
%
\begin{equation}
\sup_{f \in\Cal{F}}\biggl\llvert \int f(x)p_n(x) \,\mathrm{d}x-\int
f(x) \,\mathrm{d}\bb{P}_n(x)\biggr\rrvert =\mathrm{o}_{\bb{P}}
\bigl(n^{-1/2}\bigr).\label{Eq:smooth-1}
\end{equation}
Several recent works
(Bickel and Ritov \cite{Bickel-03},
Nickl \cite{Nickl-07},
Gin{\'{e}} and Nickl \cite{Gine-08a,Gine-09a,Gine-09b,Gine-10}) have
shown that
many popular density estimators on $\Cal{X}=\bb{R}$, such as maximum
likelihood
estimator, kernel density estimator and wavelet estimator satisfy
(\ref{Eq:smooth-1}) if $\Cal{F}$ is $\bb{P}$-Donsker~-- the Donsker classes
that
were considered in these works are: functions of bounded variation,
$\{\mathds{1}_{(-\infty,t]}\dvt t\in\bb{R}\}$, H\"{o}lder, Lipschitz and Sobolev
classes on $\bb{R}$. In other words, these works show that there exists
estimators that are within a
$\Vert\cdot\Vert_\Cal{F}$-ball of size $\mathrm{o}_{\bb{P}}(n^{-1/2})$
around $\bb{P}_n$ such that they estimate $\bb{P}$ consistently
in $\Vert\cdot\Vert_\Cal{F}$ at the rate of $n^{-1/2}$, that is, they have
a statistical behavior similar to that of $\bb{P}_n$.

The main contribution of this paper is to generalize the above
behavior of
kernel
density estimators to any $d$ by showing that $\bb{P}$ can be
estimated
optimally in $\Vert\cdot\Vert_{\Cal{F}_H}$ using a kernel density
estimator, $p_n$ (with $L^1$
optimal bandwidth) on $\bb{R}^d$ where under
certain conditions on $\Cal{K}$, $\Vert\cdot\Vert_{\Cal{F}_H}$ with
%
\begin{equation}
\Cal{F}_H:= \bigl\{f\dvtx\bb{R}^d\rightarrow\bb{R} |
\Vert f\Vert_{\Cal{H}_k}\le 1\dvt f\in\Cal{H}_k, k\in\Cal{K}
\bigr\}\label{Eq:FH}
\end{equation}
metrizes the
weak
topology on the space of Borel probability measure on $\bb{R}^d$.
Here,
$\Cal{H}_k$ denotes a
reproducing kernel Hilbert space (RKHS) (Aronszajn \cite
{Aronszajn-50}); also see Berlinet and Thomas-Agnan~\cite{Berlinet-04} and
Steinwart and Christmann \cite{Steinwart-08}, Chapter~4, for a nice
introduction to RKHS and its applications in probability, statistics and
learning theory~-- with
$k\dvtx\bb{R}^d\times\bb{R}^d\rightarrow\bb{R}$ as the reproducing
kernel (and,
therefore, positive definite) and $\Cal{K}$ is a cone of positive definite
kernels. To elaborate, the paper
shows that the kernel density estimator on $\bb{R}^d$ with
an appropriate choice of bandwidth is not only optimal in the
strong topology (i.e., in total variation distance or $L^1$) but also optimal
in the weak
topology induced by $\Vert\cdot\Vert_{\Cal{F}_H}$ (i.e., has a similar
statistical behavior to that of $\bb{P}_n$). On
the other hand, note that $\bb{P}_n$ is an optimal estimator of $\bb
{P}$ only in
the weak topology and is far from
optimal in the strong
topology as it is not even a consistent estimator of~$\bb{P}$. A~similar
result~-- optimality of kernel density estimator in both
weak and strong topologies~-- was shown by Gin{\'{e}} and Nickl \cite{Gine-08b}
for only $d=1$ where $\Cal{F}$ is chosen to be a unit ball of
bounded Lipschitz functions, $\Cal{F}_{\mathrm{BL}}$ on $\bb{R}^d$, defined
as
%
\begin{equation}
\Cal{F}_{\mathrm{BL}}:= \biggl\{f\dvtx\bb{R}^d\rightarrow\bb{R} \Big|
 \Vert f\Vert_{\mathrm{BL}}:=\sup_{x\in\bb{R}^d}\bigl|f(x)\bigr|+\sup
_{x\ne
y}\frac{|f(x)-f(y)|}{\Vert x-y\Vert_2}\le1 \biggr\},\label{Eq:FB}
\end{equation}
with $\Vert\cdot\Vert_2$ being the Euclidean norm. In comparison, our work
generalizes the result of Gin{\'{e}} and Nickl \cite{Gine-08b} to any
$d$ by working
with $\Cal{F}_H$.

Before presenting our results, in Section~\ref{Sec:rkhs}, we provide a
brief introduction to reproducing kernel Hilbert spaces, discuss
some relevant properties of $\Vert\cdot\Vert_{\Cal{F}_H}$ and provide
concrete examples for $\Cal{F}_H$ through some concrete choices of $\Cal
{K}$. We
then present our first main result in Theorem~\ref{Thm:weak} which
shows that
under certain conditions on $\Cal{K}$, $\Vert
\cdot\Vert_{\Cal{F}_H}$ metrizes the weak topology on the space of
probability measures. Since $\bb{P}_n$ is a consistent estimator
of $\bb{P}$ in weak sense, we then obtain a rate for this convergence
by showing in Theorem~\ref{Thm:consistency} that $\Vert
\bb{P}_n-\bb{P}\Vert_{\Cal{F}_H}=\mathrm{O}_{\mathrm{a.s.}}(n^{-1/2})$ by bounding the expected
suprema of U-processes~-- specifically, the homogeneous Rademacher
chaos process
of degree 2~-- indexed by a uniformly bounded Vapnik-\u{C}ervonenkis
(VC)-subgraph class $\Cal{K}$ (see de la Pe{\~{n}}a and
Gin{\'{e}} \cite{delaPena-99}, Chapter~5 for details on
U-processes).
Since Theorems \ref{Thm:weak} and
\ref{Thm:consistency} are very general, we provide examples (see
Example~\ref{Exm:weak}) to show
that a
large family of $\Cal{K}$ satisfy the assumptions in these results and,
therefore,
yield a variety of probability metrics that metrize the weak
convergence while
ensuring a dimension independent rate of $n^{-1/2}$ for
$\bb{P}_n$ converging to $\bb{P}$.

In Theorem~\ref{Thm:estim-to-p}, we present our second main result which
provides an
exponential inequality for the tail probabilities of $\Vert
\bb{P}^\star_n-\bb{P}_n\Vert_{\Cal{F}_H}=\Vert\bb{P}_n\ast
K_h-\bb{P}_n\Vert_{\Cal{F}_H}$, where $\bb{P}_n\ast K_h$ is the kernel
density estimator with bandwidth $h$, $\ast$ represents the convolution and
$K_h=h^{-d}K(\cdot/h)$ with $K\dvtx\bb{R}^d\rightarrow\bb{R}$. The
proof is based
on an application of McDiarmid's inequality, together with expectation
bounds on
the suprema of homogeneous Rademacher chaos process of degree 2,
indexed over
VC-subgraph classes. For
sufficiently smooth reproducing kernels (see Theorem~\ref
{Thm:estim-to-p} for
details),
this result shows that the kernel density estimator on $\bb{R}^d$ is
within a
$\Vert\cdot\Vert_{\Cal{F}_H}$-ball of size $\mathrm{o}_{\bb{P}}(n^{-1/2})$ around
$\bb{P}_n$ (which means $\Cal{F}_H$ ensures (\ref{Eq:smooth-1})) and,
therefore, combining Theorems~\ref{Thm:weak} and \ref{Thm:estim-to-p} yields
that the
kernel density estimator with $L^1$ optimal
bandwidth is a consistent estimator of $\bb{P}$ in weak sense with a
convergence rate of $n^{-1/2}$ (and hence is optimal in both strong and
weak topologies). We then provide concrete examples of
$\Cal{K}$ in Theorem~\ref{thmm:examples} (also see
Remark~\ref{rem:gauss-exm}) that
guarantee this behavior for the kernel density estimator. Gin{\'{e}}
and Nickl~\cite{Gine-08b}
proved a similar
result
for $\Cal{F}_{\mathrm{BL}}$ with $d=1$ which can be generalized to
any $d\ge2$ using Corollary~3.5 in Sriperumbudur \textit{et al.} \cite
{Sriperumbudur-12}. However, for $d>
2$, it can only be shown that the kernel density estimator
with $L^1$ optimal bandwidth is within $\Vert
\cdot\Vert_{\Cal{F}_{\mathrm{BL}}}$-ball of size $\mathrm{o}_{\bb{P}}(n^{-1/d})$ -- it
is $\mathrm{o}_{\bb{P}}(\sqrt{\log n}/\sqrt{n})$ for $d=2$ -- around
$\bb{P}_n$ instead of $\mathrm{o}_{\bb{P}}(n^{-1/2})$ as with
$\Vert\cdot\Vert_{\Cal{F}_H}$.

Now given that (\ref{Eq:smooth}) holds for $\Cal{F}=\Cal{F}_H$ (see
Theorem~\ref{Thm:estim-to-p} for detailed conditions and
Theorem~\ref{thmm:examples} for examples), it is of interest to know
whether the
processes $f\mapsto\sqrt{n}\int f \,\mathrm{d}(\bb{P}_n\ast
K_h-\bb{P}), f\in\Cal{F}_H$ converge in law to a Gaussian process in
$\ell^\infty(\Cal{F}_H)$. While it is not easy to verify the $\bb{P}$-Donsker
property of $\Cal{F}_H$ or the conditions in Gin{\'{e}} and Nickl (\cite{Gine-08a}, Theorem~3) which
ensure this UCLT in $\ell^\infty(\Cal{F}_H)$ for any general $\Cal{K}$
that induces $\Cal{F}_H$, in Theorem~\ref{pro:singleton}, we present concrete
examples of $\Cal{K}$ for which $\Cal{F}_H$ is $\bb{P}$-Donsker so that
the following UCLTs are
obtained:
\[
\sqrt{n} (\bb{P}_n-\bb{P})\leadsto_{\ell^\infty(\Cal{F}_H)}
\bb{G}_\bb{P} \quad\mbox{and}\quad \sqrt{n} (\bb{P}_n\ast
K_h-\bb{P})\leadsto _{\ell^\infty(\Cal{F}_H)} \bb{G}_\bb{P},
\]
where $\bb{G}_\bb{P}$ denotes the $\bb{P}$-Brownian bridge
indexed by $\Cal{F}_H$ and $\leadsto_{\ell^\infty(\Cal{F}_H)}$
denotes the convergence in law of random elements in
$\ell^\infty(\Cal{F}_H)$. A similar result was presented in
Gin{\'{e}} and Nickl (\cite{Gine-08b}, Theorem~1) for $\Cal{F}_{\mathrm{BL}}$ with
$d=1$ under the condition
that $\bb{P}$ satisfies $\int_{\bb{R}}|x|^{2\gamma} \,\mathrm{d}\bb{P}(x)<\infty$ for
some $\gamma>1/2$, which shows that additional conditions are required on
$\bb{P}$ to obtain a UCLT while working with $\Cal{F}_{\mathrm{BL}}$ in contrast to
$\Cal{F}_H$ where no such conditions are needed.

While the choice of $\Cal{F}_H$ is abstract, there are significant
computational advantages associated with this choice (over say $\Cal{F}_{\mathrm{BL}}$),
which we discuss in Section~\ref{Sec:adaptive}, where we show that
for certain~$\Cal{K}$, it is very easy to compute $\Vert\bb{P}_n\ast
K_h-\bb{P}_n\Vert_{\Cal{F}_H}$ compared to $\Vert
\bb{P}_n\ast K_h-\bb{P}_n\Vert_{\Cal{F}_{\mathrm{BL}}}$ as in the former case, the
problem reduces to a maximization problem over $\bb{R}$ in contrast to an
infinite
dimensional optimization problem in $\Cal{F}_{\mathrm{BL}}$. The need to compute
$\Vert
\bb{P}_n\ast
K_h-\bb{P}_n\Vert_{\Cal{F}}$ occurs while constructing adaptive
estimators that
estimate
$\bb{P}$ efficiently in $\Cal{F}$ and at the
same time estimates the density of $\bb{P}$ (if it exists, but without
a priori
assuming its existence) at the best possible convergence rate in some relevant
loss over prescribed class of densities, for example, sup-norm loss
over the H\"{o}lder balls and $L^1$-loss over Sobolev balls. The
construction of these adaptive estimators involves applying Lepski's
method (Lepski, Mammen and Spokoiny \cite{Lepski-97}) to kernel density
estimators that are within a $\Vert
\cdot\Vert_\Cal{F}$-ball of size smaller than $n^{-1/2}$ around
$\bb{P}_n$, which in turn involves computing $\Vert\bb{P}_n\ast
K_h-\bb{P}_n\Vert_{\Cal{F}}$ (see Gin{\'{e}} and Nickl
\cite{Gine-08b}, Theorem~1, \cite{Gine-09a}, Theorem~2 and
\cite{Gine-10}, Theorem~3). Along the lines of Gin{\'{e}} and Nickl
\cite{Gine-08b}, Theorem~1, in Section~\ref{Sec:adaptive}, we also discuss
the optimal adaptive estimation of $\bb{P}$ in weak and strong topologies.

Various notation
and definitions that are used throughout the paper are collected in
Section~\ref{Sec:notation}. The missing proofs of the results are
provided in
Section~\ref{Sec:proofs} and supplementary results are collected in the \hyperref[app]{Appendix}.

\section{Definitions and notation}\label{Sec:notation}
Let $\Cal{X}$ be a topological space. $\ell^\infty(\Cal{X})$ denotes the
Banach space of bounded real-valued functions $F$ on $\Cal{X}$ normed by
$\Vert F\Vert_\Cal{X}:=\sup_{x\in\Cal{X}}|F(x)|$. $C(\Cal{X})$ denotes
the space
of all continuous real-valued functions on $\Cal{X}$. $C_b(\Cal{X})$ is the
space of all bounded, continuous real-valued functions on~$\Cal{X}$.
For a
locally compact Hausdorff space, $\Cal{X}$, $f\in C(\Cal{X})$ is said to
\emph{vanish at infinity} if for every $\epsilon>0$ the set
$\{x\in\Cal{X}\dvt |f(x)|\ge\epsilon\}$ is compact. The class of all
continuous $f$
on $\Cal{X}$ which vanish at infinity is denoted as $C_0(\Cal{X})$. The spaces
$C_b(\Cal{X})$ and $C_0(\Cal{X})$ are endowed with the uniform norm,
$\Vert\cdot\Vert_\Cal{X}$, which we alternately denote as $\Vert
\cdot\Vert_\infty$. $M^1_+(\Cal{X})$ denotes the space of all Borel probability
measures defined on $\Cal{X}$ while $M_b(\Cal{X})$ denotes the space of all
finite signed Borel measures on $\Cal{X}$. $L^r(\Cal{X},\mu)$ denotes
the Banach
space of $r$-power ($r\ge1$) $\mu$-integrable functions where $\mu$ is
a Borel
measure defined on $\Cal{X}$. We will write $L^r(\Cal{X})$ for
$L^r(\Cal{X},\mu)$ if $\mu$ is a Lebesgue measure on $\Cal{X}\subset\bb{R}^d$.
$W^s_1(\bb{R}^d)$ denotes the space of functions $f\in L^1(\bb{R}^d)$
whose partial derivatives up to order $s\in\bb{N}$ exist and are in
$L^1(\bb{R}^d)$. $\Cal{F}_{\mathrm{BL}}$ denotes the unit ball of
bounded Lipschitz functions on $\Cal{X}=\bb{R}^d$ as shown in (\ref
{Eq:FB}). A
function $k\dvtx\Cal{X}\times\Cal{X}\rightarrow\bb{C}$ is called a
positive definite (p.d.) kernel if, for all $n\in\bb{N}$,
$(\alpha_1,\ldots,\alpha_n)\in\bb{C}^n$ and all ($x_1,\ldots,x_n)\in
\Cal{X}^n$,
we
have
\[
\sum^n_{i,j=1}
\alpha_i\overline{\alpha}_jk(x_i,x_j)
\ge0,
\]
where $\overline{\alpha}$ is the complex conjugate of
$\alpha\in\bb{C}$. $\Cal{H}_k$ denotes a reproducing kernel Hilbert space
(RKHS) (see
Definition~\ref{fin:rkhs}) of functions with a positive definite $k$ as
the reproducing kernel and $\langle\cdot,\cdot\rangle_{\Cal{H}_k}$
denotes the
inner product on $\Cal{H}_k$. $\Cal{F}_H$ denotes the unit ball of RKHS
functions
indexed
by a cone of positive definite kernels, $\Cal{K}$ as shown in (\ref{Eq:FH}).
The convolution $f\ast g$
of two measurable functions $f$ and $g$ on $\bb{R}^d$ is defined as
$(f\ast
g)(x):=\int_{\bb{R}^d}f(y)g(x-y) \,\mathrm{d}y$, provided the integral exists for all
$x\in\bb{R}^d$. Similarly, the convolution of $\mu\in M_b(\bb{R}^d)$ and
measurable $f$ is defined as
\[
(f\ast\mu) (x):=\int_{\bb{R}^d}f(x-y) \,\mathrm{d}\mu(y)
\]
if
the integral exists for all $x\in\bb{R}^d$. For $f\in
L^1(\bb{R}^d)$, its Fourier transform is defined
as
\[
\widehat{f}(y)=(2\pi)^{-d/2}\int_{\bb{R}^d}f(x)
\mathrm{e}^{-\sqrt{-1}\langle
y,x\rangle} \,\mathrm{d}x.
\]
%
A sequence of probability measures,
$(\bb{P}_{(n)})_{n\in\bb{N}}$ is said to
converge weakly to $\bb{P}$ (denoted
as $\bb{P}_{(n)}\leadsto\bb{P}$) if and only if
$\int f \,\mathrm{d}\bb{P}_{(n)}\rightarrow\int f \,\mathrm{d}\bb{P}$ for all $f\in C_b(\Cal{X})$
as
$n\rightarrow
\infty$. For a Borel-measurable real-valued function $f$ on $\Cal{X}$ and
$\mu\in M_b(\Cal{X})$, we define $\mu f:=\int_{\Cal{X}} f \,\mathrm{d}\mu$. The empirical
process indexed by $\Cal{F}\subset L^2(\Cal{X},\bb{P})$ is given by
$f\mapsto
\sqrt{n}(\bb{P}_n-\bb{P})f=n^{-1/2}\sum^n_{i=1}(f(X_i)-\bb{P}f)$, where
$\bb{P}_n:=\frac{1}{n}\sum^n_{i=1}\delta_{X_i}$ with $(X_i)^n_{i=1}$ being
random samples drawn i.i.d. from $\bb{P}$ and $\delta_x$ represents the Dirac
measure at $x$. $\Cal{F}$ is said to be \emph{$\bb{P}$-Donsker} if
$\sqrt{n}(\bb{P}_n-\bb{P})\leadsto_{\ell^\infty(\Cal{F})}\bb{G}_\bb
{P}$, where
$\bb{G}_\bb{P}$ is the Brownian bridge indexed by $\Cal{F}$, that is, a centered
Gaussian process with covariance
$\bb{E}\bb{G}_\bb{P}(f)\bb{G}_\bb{P}(g)=\bb{P}((f-\bb{P}f)(g-\bb{P}g))$
and if
$\bb{G}_\bb{P}$ is sample-bounded and sample-continuous w.r.t. the covariance
metric. $\leadsto_{\ell^\infty(\Cal{F})}$ denotes the convergence in
law (or
weak convergence) of random elements in $\ell^\infty(\Cal{F})$. $\Cal
{F}$ is
said to be \emph{universal Donsker} if it is $\bb{P}$-Donsker for all
$\bb{P}\in M^1_+(\Cal{X})$.

Let $\Cal{C}$ be a collection of subsets of a set $\Cal{X}$. The collection
$\Cal{C}$ is said to \emph{shatter} an arbitrary set of $n$
points, $\{x_1,\ldots,x_n\}$, if for each of its $2^n$ subsets, there exists
$C\in\Cal{C}$ such that $C\cap\{x_1,\ldots,x_n\}$ yields the subset. The
\emph{Vapnik-\u{C}ervonenkis (VC)-index}, $\mathit{VC}(\Cal{C})$ of the class
$\Cal{C}$
is
the maximal $n$ for which an $n$-point set is shattered by $\Cal{C}$.
If $\mathit{VC}(\Cal{C})$ is
finite, then $\Cal{C}$ is said to be a \emph{VC-class}. A~collection
$\Cal{F}$
of real-valued functions on $\Cal{X}$ is called a \emph{VC-subgraph
class} if
the collection of all subgraphs of the functions in $\Cal{F}$, that is,
$ \{\{(x,t)\dvt t<f(x)\}\dvt f\in\Cal{F} \}$ forms a VC-class of sets in
$\Cal{X}\times\bb{R}$. The \emph{covering number}
$\Cal{N}(\Cal{F},\rho,\epsilon)$ is the minimal number of balls
$\{g\dvt \rho(f,g)<\epsilon\}$ of radius $\epsilon$ needed to cover $\Cal
{F}$, where $\rho$ is a metric on $\Cal{F}$.

Given random samples $(X_i)^n_{i=1}\subset\bb{R}^d$ drawn i.i.d. from
$\bb{P}$,
the kernel density estimator is defined as
\[
(\bb{P}_n\ast K_h) (x)=\frac{1}{nh^d}\sum
^n_{i=1}K \biggl(\frac{x-X_i}{h} \biggr),\qquad x\in
\bb{R}^d,
\]
where $K\dvtx\bb{R}^d\rightarrow\bb{R}$ is the smoothing kernel that satisfies
$K(x)=K(-x), x\in\bb{R}^d$, $K\in L^1(\bb{R}^d)$ and $\int_{\bb{R}^d}
K(x) \,\mathrm{d}x=1$ with $K_h(x):=h^{-d}K(x/h)$ and
$0<h:=h_n\rightarrow0$ as $n\rightarrow\infty$. $K$ is said
to be of order $r>0$ if
\begin{eqnarray*}
\int_{\bb{R}^d}\prod^d_{i=1}y^{\alpha_i}_i
K(y) \,\mathrm{d}y&=&0 \qquad\mbox{for } 0<|\alpha|\le r-1\quad\mbox{and}\\
 \int_{\bb{R}^d}
\prod^d_{i=1}|y_i|^{\alpha_i}\bigl|K(y)\bigr|
\,\mathrm{d}y&<&\infty \qquad\mbox{for } |\alpha|=r,
\end{eqnarray*}
where $y=(y_1,\ldots,y_d)$,
$\alpha=(\alpha_1,\ldots,\alpha_d)$, $\alpha_i\ge0, \forall i=1,\ldots
,d$ and
$|\alpha|:=\sum^d_{i=1}\alpha_i$. We refer the reader to
Berlinet and Thomas-Agnan (\cite{Berlinet-04}, Chapter~3, Section~8) for
details about the construction of
kernels of arbitrary order, $r$.

We would like to mention that throughout the paper, we ignore the measurability
issues that are associated with the suprema of an empirical process (or a
U-process) and therefore the probabilistic statements about these objects
should be considered in the outer measure.

\section{Reproducing kernel Hilbert spaces and
\texorpdfstring{$\Vert\cdot\Vert_{\Cal{F}_{H}}$}{$||cdot||_{F_{H}}$}}\label{Sec:rkhs}

In this section, we present a brief overview of RKHS along with some properties
of $\Vert\cdot\Vert_{\Cal{F}_H}$ with a goal to provide an
intuitive understanding of, otherwise an abstract class $\Cal{F}_H$ and its
associated distance, $\Vert\cdot\Vert_{\Cal{F}_H}$. Throughout this
section, we assume that $\Cal{X}$ is a topological space.

\subsection{Preliminaries}
We start with the definition of an
RKHS, which we quote from Berlinet and Thomas-Agnan \cite{Berlinet-04}.
For the purposes of this paper,
we deal with real-valued RKHS though the following definition can be extended
to the complex-valued case (see Berlinet and Thomas-Agnan
\cite{Berlinet-04}, Chapter~1, Definition~1).

\begin{fin}[(Reproducing kernel Hilbert space)]\label{fin:rkhs}
Let $(\Cal{H}_k,\langle
\cdot,\cdot\rangle_{\Cal{H}_k})$ be a Hilbert space of real-valued
functions on
$\Cal{X}$. A function $k\dvtx\Cal{X}\times\Cal{X}\rightarrow\bb{R},
(x,y)\mapsto
k(x,y)$ is
called a \emph{reproducing kernel} of the Hilbert space $\Cal{H}_k$ if
and only
if the
following hold:
\begin{longlist}[(ii)]
\item[(i)] $\forall y\in\Cal{X}$, $k(\cdot,y)\in\Cal{H}_k$;
\item[(ii)] $\forall y\in\Cal{X}$, $\forall f\in\Cal{H}_k$, $\langle
f,k(\cdot,y)\rangle_{\Cal{H}_k}=f(y)$.
\end{longlist}
If such a $k$ exists, then $\Cal{H}_k$ is called a \emph{reproducing kernel
Hilbert space}.
\end{fin}

Using the Riesz representation theorem, the
above definition can be shown to be
equivalent to defining $\Cal{H}_k$ as an RKHS if for all
$x\in\Cal{X}$, the evaluation functional, $\delta_x\dvtx\Cal
{H}_k\rightarrow\bb{R}$,
$\delta_x(f):=f(x), f\in\Cal{H}_k$ is continuous (Berlinet and
Thomas-Agnan \cite{Berlinet-04}, Chapter~1, Theorem~1). Starting from Definition~\ref{fin:rkhs}, it can be
shown that $\Cal{H}_k=\overline{\operatorname{span}}\{k(\cdot,x)\dvt x\in\Cal{X}\}$
where the closure is taken w.r.t. the RKHS norm
(see Berlinet and Thomas-Agnan \cite{Berlinet-04}, Chapter~1, Theorem~3), which means the kernel
function, $k$ generates the RKHS. Since $\langle
k(\cdot,x),k(\cdot,y)\rangle_{\Cal{H}_k}=k(x,y), \forall x,y\in\Cal
{X}$, it
is easy to show that every reproducing kernel (r.k.), $k$ is symmetric
and positive definite. More interestingly, the converse is also true,
that is, the Moore--Aronszajn theorem (Aronszajn \cite{Aronszajn-50})
states that for
every positive definite kernel, $k$, there exists a unique RKHS, $\Cal{H}_k$
with $k$ as the r.k. Since $k$ is a reproducing kernel if and only if
it is
positive definite, usually it might be simpler to verify for the positive
definiteness of $k$ rather than explicitly constructing $\Cal{H}_k$ and
verifying whether $k$ satisfies the properties in
Definition~\ref{fin:rkhs}. An
important characterization for positive definiteness on $\bb{R}^d$ (more
generally on locally compact Abelian groups) is given by
Bochner's theorem (Wendland \cite{Wendland-05}, Theorem~6.6): a bounded continuous
translation invariant kernel
$k(x,y)=\psi(x-y)$ on $\bb{R}^d$ is positive definite if and only if
$\psi$ is
the Fourier transform of
a nonnegative finite Borel measure, $\Upsilon$, that is,
%
\begin{equation}
k(x,y)=\psi(x-y)=\int \mathrm{e}^{-\sqrt{-1}(x-y)^T\omega} \,\mathrm{d}\Upsilon(\omega), \qquad x,y\in
\bb{R}^d. \label{Eq:Bochner}
\end{equation}
In addition, if $\psi\in L^1(\bb{R}^d)$, then the
corresponding RKHS is given by Wendland (see \cite{Wendland-05}, Theorem~10.12),
\[
\Cal{H}_k= \biggl\{f\in L^2\bigl(\bb{R}^d
\bigr)\cap C\bigl(\bb{R}^d\bigr) \dvt \int \frac{|\widehat{f}(\omega)|^2}{\widehat{\psi}(\omega)} \,\mathrm{d}\omega<
\infty \biggr\},
\]
where $\widehat{f}$ and $\widehat{\psi}$ denote the Fourier transforms of
$f$ and $\psi$, respectively. Note that since $\psi\in L^1(\bb{R}^d)$,
we have
$\mathrm{d}\Upsilon(\omega)=(2\pi)^{-d/2}\widehat{\psi}(\omega) \,\mathrm{d}\omega$. Another
characterization of positive definiteness (which we will use later in our
results) is due to Sch\"{o}nberg for radially symmetric positive definite
kernels (Wendland \cite{Wendland-05}, Theorems 7.13 and 7.14): A function
$k(x,y)=\phi(\Vert
x-y\Vert^2_2), x,y\in\bb{R}^d$ is positive definite if and only if $\phi
$ is
the Laplace transform of a nonnegative finite Borel measure, $\nu$ on
$[0,\infty)$, that is,
%
\begin{equation}
k(x,y)=\phi\bigl(\Vert x-y\Vert^2_2\bigr)=\int
^\infty_0 \mathrm{e}^{-t\Vert
x-y\Vert^2_2} \,\mathrm{d}\nu(t),\qquad x,y\in
\bb{R}^d.\label{Eq:schoenberg}
\end{equation}
Note that the Bochner's characterization in (\ref{Eq:Bochner}) is also valid
for $k$ in (\ref{Eq:schoenberg}) as it is also translation invariant. Two
important examples of positive definite kernels and their corresponding RKHSs
that appear throughout the paper are: \emph{Gaussian kernel},
$k(x,y)=\exp(-\sigma\Vert
x-y\Vert^2_2), x,y\in\bb{R}^d, \sigma>0$, which induces the \emph{Gaussian
RKHS},
%
\begin{equation}
\Cal{H}_k= \biggl\{f\in L^2\bigl(\bb{R}^d
\bigr)\cap C\bigl(\bb{R}^d\bigr) \dvt \int \bigl|\widehat{f}(
\omega)\bigr|^2\mathrm{e}^{\Vert
\omega\Vert^2_2/4\sigma} \,\mathrm{d}\omega<\infty \biggr\}\label{Eq:gauss-rkhs}
\end{equation}
and the \emph{Mat\'{e}rn
kernel},
$k(x,y)=\frac{2^{1-\beta}}{\Gamma(\beta)}\Vert
x-y\Vert^{\beta-d/2}_2\frak{K}_{d/2-\beta}(\Vert
x-y\Vert_2), x,y\in\bb{R}^d, \beta>d/2$, which induces the Sobolev space,
$H^\beta_2$,
%
\begin{equation}
\Cal{H}_k=H^\beta_2= \biggl\{f\in
L^2\bigl(\bb{R}^d\bigr)\cap C\bigl(\bb{R}^d
\bigr) \dvt \int \bigl(1+\Vert \omega\Vert^2_2
\bigr)^\beta\bigl|\widehat{f}(\omega)\bigr|^2 \,\mathrm{d}\omega<\infty \biggr\}.
\label{Eq:sobolev}
\end{equation}
Here, $\Gamma$ is the Gamma
function and $\frak{K}_v$ is the modified Bessel
function of the third kind of order~$v$, where $v$ controls the
smoothness of
$k$. Note that $L^2(\bb{R}^d)$ is not an RKHS as it does not consist of
functions.

\subsection{Properties of \texorpdfstring{$\Vert\cdot\Vert_{\Cal{F}_H}$}{$||cdot||_{F_H}$}}
In the following, we present various properties which are not only
helpful to
intuitively understand $\Vert\cdot\Vert_{\Cal{F}_H}$ but are also
useful to
derive our main results in Section~\ref{Sec:mainresult}. First, in
Proposition~\ref{pro:embedding}, we provide an
alternate expression
for $\Vert\cdot\Vert_{\Cal{F}_H}$ to obtain a better interpretation, using
which we discuss the relation of
$\Vert\cdot\Vert_{\Cal{F}_H}$ to other classical distances on probabilities.
This alternate representation will be particularly helpful in studying the
convergence of $\bb{P}_n$ to $\bb{P}$ in $\Vert\cdot\Vert_{\Cal{F}_H}$
and also
in deriving our main result (Theorem~\ref{Thm:estim-to-p}) in
Section~\ref{Sec:mainresult}. Second, we discuss
the question of the metric property of
$\Vert\cdot\Vert_{\Cal{F}_H}$ -- it is easy to
verify that $\Vert\cdot\Vert_{\Cal{F}_H}$ is a pseudometric~-- and highlight
some of the results that we obtained
in our earlier works along with some examples in Example~\ref{Exm:charac}.
Third, we present a new result in Theorem~\ref{Thm:weak} about the topology
induced by $\Vert\cdot\Vert_{\Cal{F}_H}$ wherein we show that under
certain mild
conditions on $\Cal{K}$, $\Vert\cdot\Vert_{\Cal{F}_H}$ metrizes the
weak topology on probability measures. We also present some examples of
$\Cal{K}$ in Example~\ref{Exm:weak} that satisfy these conditions thereby
ensuring the metrization of weak topology by the corresponding metric,
$\Vert\cdot\Vert_{\Cal{F}_H}$. Finally, in Theorem~\ref
{Thm:consistency}, we
present an exponential concentration inequality for the tail
probabilities of
$\Vert\bb{P}_n-\bb{P}\Vert_{\Cal{F}_H}$ and show that for various
families of
$\Cal{K}$ (see Remark~\ref{Rem:supp}(i) and
Theorem~\ref{thmm:examples}), $\Vert\bb{P}_n-\bb{P}\Vert_{\Cal{F}_H}=\mathrm{O}_{\mathrm{a.s.}}
(n^{-1/2})$, which when
combined with Theorem~\ref{Thm:weak} provides a rate of convergence of
$n^{-1/2}$ for $\bb{P}_n$ converging to $\bb{P}$ in weak sense.

\textit{Alternate representation for}
$\Vert\cdot\Vert_{\Cal{F}_H}$: The following result (a similar
result is
proved in Sriperumbudur
\textit{et al.} \cite{Sriperumbudur-10a}, Theorem~1, where $\Cal{K}$ is chosen
to be a
singleton set but
we provide a proof in Section~\ref{subsec:embedding} for completeness) presents
an alternate
representation to $\Vert\cdot\Vert_{\Cal{F}_H}$. This representation is
particularly useful as it shows that $\Vert\cdot\Vert_{\Cal{F}_H}$ is
completely determined by the kernels, $k\in\Cal{K}$ and does not depend
on the
individual functions in the corresponding RKHSs.

\begin{pro}\label{pro:embedding}
Define
$\Scr{P}_\Cal{K}:= \{\bb{P}\in M^1_+(\Cal{X})\dvt \sup_{k\in\Cal{K}}\int
\sqrt{k(x,x)} \,\mathrm{d}\bb{P}(x)<\infty
 \}$ where every $k\in\Cal{K}$, $k\dvtx\Cal{X}\times\Cal
{X}\rightarrow\bb{R}$ is measurable.
Then for any
$\bb{P},\bb{Q}\in\Scr{P}_\Cal{K}$,
%
\begin{equation}
\Vert\bb{P}-\bb{Q}\Vert_{\Cal{F}_H} =\sup_ { k\in\Cal{ K } }
\fr{D}_k(\bb{P},\bb{Q}),\label{Eq:prop}\
\end{equation}
where
%
\begin{eqnarray}
\fr{D}_k(\bb{P},\bb{Q})&:=&\biggl\llVert \int k(\cdot,x) \,\mathrm{d}
\bb{P}(x)-\int k(\cdot,x) \,\mathrm{d}\bb{Q}(x)\biggr\rrVert _{\Cal{H}_k}\label{Eq:MMD}
\\
&=&\sqrt{\int\int k(x,y) \,\mathrm{d}(\bb{P}-\bb{Q}) (x) \,\mathrm{d}(\bb{P}-\bb{Q})
(y)},\label{Eq:MMD-1}
\end{eqnarray}
with $\int k(\cdot,x) \,\mathrm{d}\bb{P}(x)$ and $\int k(\cdot,x) \,\mathrm{d}\bb{Q}(x)$
being defined in Bochner sense (Diestel and Uhl \cite{Diestel-77}, Definition~1).
\end{pro}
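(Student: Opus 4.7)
The plan is to establish (\ref{Eq:prop}) by first handling a single kernel $k\in\Cal{K}$ and then taking the supremum. For fixed $k$, the starting observation is that the condition $\int\sqrt{k(x,x)}\,d\bb{P}(x)<\infty$ is exactly $\int \Vert k(\cdot,x)\Vert_{\Cal{H}_k}\,d\bb{P}(x)<\infty$ (since $\Vert k(\cdot,x)\Vert_{\Cal{H}_k}^2=k(x,x)$ by the reproducing property). Together with the measurability assumption on $k(\cdot,x)$ (which, combined with $\Cal{H}_k=\overline{\text{span}}\{k(\cdot,x):x\in\Cal{X}\}$, makes the map $x\mapsto k(\cdot,x)$ into $\Cal{H}_k$ Pettis-measurable into a separable subspace), this guarantees that the mean embedding
\[
\mu^k_\bb{P}:=\int k(\cdot,x)\,d\bb{P}(x)
\]
exists as a Bochner integral in $\Cal{H}_k$, and likewise for $\bb{Q}$.

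Next, I would invoke the defining property of the Bochner integral, namely that it commutes with continuous linear functionals. Applying the reproducing evaluation functional $f\mapsto\langle f,g\rangle_{\Cal{H}_k}$ for $g\in\Cal{H}_k$ yields
\[
\langle g,\mu^k_\bb{P}-\mu^k_\bb{Q}\rangle_{\Cal{H}_k}=\int\langle g,k(\cdot,x)\rangle_{\Cal{H}_k}\,d(\bb{P}-\bb{Q})(x)=\int g\,d(\bb{P}-\bb{Q}).
\]
Taking the supremum of $|\int g\,d(\bb{P}-\bb{Q})|$ over $\{g\in\Cal{H}_k:\Vert g\Vert_{\Cal{H}_k}\le1\}$ and applying Cauchy--Schwarz (with equality attained by $g=(\mu^k_\bb{P}-\mu^k_\bb{Q})/\Vert\mu^k_\bb{P}-\mu^k_\bb{Q}\Vert_{\Cal{H}_k}$ whenever the denominator is nonzero) produces $\Vert\mu^k_\bb{P}-\mu^k_\bb{Q}\Vert_{\Cal{H}_k}=\fr{D}_k(\bb{P},\bb{Q})$. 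Taking the supremum over $k\in\Cal{K}$ on both sides, using that $\Cal{F}_H$ is precisely the union of unit balls of $\Cal{H}_k$ over $k\in\Cal{K}$, yields (\ref{Eq:prop}) together with the first equality in the definition of $\fr{D}_k$.

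For the second representation (\ref{Eq:MMD-1}), I would expand $\Vert\mu^k_\bb{P}-\mu^k_\bb{Q}\Vert_{\Cal{H}_k}^2=\langle\mu^k_\bb{P}-\mu^k_\bb{Q},\mu^k_\bb{P}-\mu^k_\bb{Q}\rangle_{\Cal{H}_k}$ and apply the Bochner-integral/inner-product interchange twice. Concretely, writing $\nu:=\bb{P}-\bb{Q}$, the first application gives $\langle \mu^k_\nu, h\rangle_{\Cal{H}_k}=\int h(x)\,d\nu(x)$ for every $h\in\Cal{H}_k$; specializing $h=\mu^k_\nu$ and then applying the same identity a second time to evaluate $\mu^k_\nu(x)=\int k(x,y)\,d\nu(y)$ yields the desired double integral $\int\int k(x,y)\,d\nu(x)\,d\nu(y)$.

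The main obstacle, and the only step requiring care, is justifying the Bochner-integral framework: one needs strong measurability of $x\mapsto k(\cdot,x)$ into $\Cal{H}_k$ so that the integral is well-defined and the interchange-with-inner-product identities hold. I would handle this via Pettis' theorem, noting that weak measurability follows from measurability of $y\mapsto\langle k(\cdot,y), k(\cdot,x)\rangle_{\Cal{H}_k}=k(x,y)$ for each $x$ (i.e., the hypothesis), and essential separable-valuedness follows because $\mu^k_\bb{P}$ takes values in the closed span of $\{k(\cdot,x)\}$. Once this foundational point is secured, the rest of the argument is a clean application of Cauchy--Schwarz and bilinearity.
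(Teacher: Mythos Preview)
Your proposal is correct and follows essentially the same route as the paper: identify $\int\sqrt{k(x,x)}\,d\bb{P}<\infty$ as the Bochner-integrability condition, pull the inner product through the Bochner integral to get $\langle g,\mu^k_\bb{P}-\mu^k_\bb{Q}\rangle_{\Cal{H}_k}=\int g\,d(\bb{P}-\bb{Q})$, attain the supremum over the unit ball at the normalized mean-embedding difference, and then expand the squared norm for (\ref{Eq:MMD-1}). If anything, you are more explicit than the paper about the strong-measurability step (via Pettis' theorem), which the paper simply subsumes under ``Bochner-integrable.''
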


Since $\sqrt{k(x,x)}=\Vert k(\cdot,x)\Vert_{\Cal{H}_k}$, it is easy to
verify from
Proposition~\ref{pro:embedding} that for any $\bb{P},\bb{Q}\in\Scr
{P}_\Cal{K}$, $\Vert\bb{P}-\bb{Q}\Vert_{\Cal{F}_H}<\infty$.
It also follows from (\ref{Eq:prop}) and (\ref{Eq:MMD})
that $\Vert\bb{P}-\bb{Q}\Vert_{\Cal{F}_H}$ can
be
interpreted as the supremum distance between the
embeddings $\bb{P}\mapsto\int
k(\cdot,x) \,\mathrm{d}\bb{P}(x)$ and $\bb{Q}\mapsto\int
k(\cdot,x) \,\mathrm{d}\bb{Q}(x)$, indexed by $k\in\Cal{K}$.
%
Choosing
$k(\cdot,x)$ as
%
\begin{equation}
\frac{1}{(2\pi)^{d/2}}\mathrm{e}^{-\sqrt{-1}\langle
\cdot,x\rangle_2}, \qquad\mathrm{e}^{\langle\cdot,x\rangle_2} \quad\mbox{and}\quad
\frac{1}{(4\pi)^{d/2}}\mathrm{e}^{-\Vert
\cdot-x\Vert^2_2/4},\qquad x\in\bb{R}^d,\label{Eq:choice}
\end{equation}
the
embedding $\Phi\dvtx M^1_+(\Cal{X})\rightarrow\Cal{H}_k$, $\bb{P}\mapsto\int
k(\cdot,x) \,\mathrm{d}\bb{P}(x)$ reduces to the
characteristic function, moment generating function (if it exists)
and Weierstrass transform of $\bb{P}$, respectively. In this sense, $\Phi
$ can be
seen as a generalization of these notions (which are all defined on
$\bb{R}^d$) to an arbitrary topological space $\Cal{X}$ (in fact, it
holds for
any arbitrary measurable space).

\textit{When is} $\Vert\cdot\Vert_{\Cal{F}_H}$ \textit{a metric on} $\Scr{P}_\Cal{K}$?
 While $\Vert\cdot\Vert_{\Cal{F}_H}$ is a pseudo-metric on
$\Scr{P}_\Cal{K}$, it
is in general not
a metric as $\Vert\bb{P}-\bb{Q}\Vert_{\Cal{F}_H}=0\not\Longrightarrow
\bb{P}=\bb{Q}$ as shown by
the choice $\Cal{K}=\{k\}$ where $k(x,y)=\langle
x,y\rangle_2, x,y\in\bb{R}^d$. For this choice, it
is easy to
check that $\Vert\bb{P}-\bb{Q}\Vert_{\Cal{F}_H}$ is the Euclidean distance
between the means of $\bb{P}$
and $\bb{Q}$ and, therefore, is not a metric on $\{\bb{P}\in
M^1_+(\bb{R}^d)\dvt \int\Vert x\Vert \,\mathrm{d}\bb{P}(x)<\infty\}$ (and hence on
$M^1_+(\bb{R}^d)$).
The
question of when is $\fr{D}_k$ a metric on $M^1_+(\Cal{X})$ is
addressed in
Fukumizu \textit{et al.} \cite{Fukumizu-08a,Fukumizu-08b}, Gretton \textit{et al.} \cite
{Gretton-06} and Sriperumbudur
\textit{et al.} \cite{Sriperumbudur-10a}. By defining any
kernel for which $\fr{D}_k$ is a metric as the \emph{characteristic
kernel}, it
is easy to see that if any $k\in\Cal{K}$ is characteristic, then
$\Vert\cdot\Vert_{\Cal{F}_H}$ is a
metric on $\Scr{P}_\Cal{K}$. Sriperumbudur
\textit{et al.} (\cite{Sriperumbudur-10a}, Theorem~7) showed that $k$
is characteristic if and
only if
%
\begin{equation}
\int\int k(x,y) \,\mathrm{d}\mu(x) \,\mathrm{d}\mu(y)>0\qquad \forall\mu\in M_b(\Cal{X})
\setminus\{0\} \mbox{ with } \mu(\Cal{X})=0.\label{Eq:charac}
\end{equation}
Combining this with the Bochner
characterization for positive definiteness (see (\ref{Eq:Bochner})),
Sriperumbudur
\textit{et al.} (\cite{Sriperumbudur-10a}, Corollary~4)
showed that
\[
\frak{D}_k(\bb{P},\bb{Q})=\llVert \phi_\bb{P}-
\phi_\bb{Q}\rrVert _{L^2(\bb{R}^d,\Upsilon)}, \qquad\bb{P},\bb {Q}\in
M^1_+\bigl(\bb{R}^d\bigr),
\]
using which $k$ is shown to be characteristic if and only if
$\operatorname{supp}(\Upsilon)=\bb{R}^d$ (Sriperumbudur
\textit{et al.} \cite{Sriperumbudur-10a}, Theorem~9)~-- Fukumizu \textit{et al.} \cite{Fukumizu-08b} generalized
this result to locally
compact Abelian groups, compact non-Abelian groups and the semigroup
$\bb{R}^d_+$. Here, $\phi_\bb{P}$ and $\phi_\bb{Q}$ represent the
characteristic functions of $\bb{P}$ and $\bb{Q}$, respectively. Another
interesting characterization for the characteristic property of $k$ is obtained
by Fukumizu \textit{et al.} \cite{Fukumizu-08a,Fukumizu-08b} and Gretton \textit{et al.}
\cite{Gretton-06}, which relates it to the richness
of $\Cal{H}_k$ in the sense of approximating certain classes of
functions by
functions in $\Cal{H}_k$. We refer the reader to Sriperumbudur,
Fukumizu and
Lanckriet \cite{Sriperumbudur-11a} for
more
details on the relation
between the characteristic property of $k$ and the richness of $\Cal{H}_k$.

\begin{exm}\label{Exm:charac}
The following are some examples of $\Cal{K}$ for which
$\Vert\cdot\Vert_{\Cal{F}_H}$ is a
metric on
$M^1_+(\bb{R}^d)$:
%
\begin{enumerate}
\item Gaussian: $\Cal{K}= \{\mathrm{e}^{-\sigma\Vert
x-y\Vert^2_2}, x,y\in\bb{R}^d \dvt \sigma\in(0,\infty) \}$;
\item Laplacian: $\Cal{K}= \{\mathrm{e}^{-\sigma\Vert
x-y\Vert_1}, x,y\in\bb{R}^d \dvt \sigma\in(0,\infty) \}$;
\item Mat\'{e}rn:
\[
\Cal{K}= \biggl\{\frac{2({c}/{2})^{\beta-{d}/{2}}}{\Gamma(\beta
-{d}/{2})} \Vert x-y\Vert^{\beta-{d}/{2}}_2
\frak{K}_{{d}/{2}-\beta} \bigl(c\Vert x-y\Vert_2 \bigr), x,y\in
\bb{R}^d, \beta>\frac{d}{2}\dvt c\in(0,\infty ) \biggr\},
\]
where $\frak{K}_v$ is the modified Bessel function of
the third kind of order $v$;
\item Inverse multiquadrics:
$\Cal{K}= \{ (1+\llVert \frac{x-y}{c}\rrVert ^2_2
)^{-\beta},
x,y\in\bb{R}^d, \beta>0 \dvt c\in(0,\infty)  \}$;
\item Splines:
\[
\Cal{K}= \Biggl\{\prod^d_{j=1} \biggl(1-
\frac{|x_j-y_j|}{c_j} \biggr)\mathds{1}_{
 \{|x_j-y_j|\le
c_j \}}, x,y\in\bb{R}^d
\dvt c_j\in (0,\infty), \forall j=1,\ldots,d \Biggr\};
\]
\item Radial basis functions:
\[
\Cal{K}= \biggl\{\int_{(0,\infty)}\mathrm{e}^{-\sigma\Vert
x-y\Vert^2_2} \,\mathrm{d}\Lambda(
\sigma), x,y\in\bb{R}^d \dvt \Lambda\in M^1_+\bigl((0,
\infty)\bigr) \biggr\}.
\]
\end{enumerate}
In all these examples, it is easy to check that every $k\in\Cal{K}$ is bounded
and characteristic (as $\operatorname{supp}(\Upsilon)=\bb{R}^d$ or in turn satisfies
(\ref{Eq:charac})) and, therefore, $\Vert\cdot\Vert_{\Cal{F}_H}$ is a
metric on
$M^1_+(\bb{R}^d)$.
\end{exm}

\textit{Topology induced by} $\Vert\cdot\Vert_{\Cal{F}_H}$:
Sriperumbudur
\textit{et al.} \cite{Sriperumbudur-10a} showed that for any bounded kernel $k$,
\[
\fr{D}_k(\bb{P},\bb{Q})\le\sup_{x\in\Cal{X}}
\sqrt{k(x,x)} \operatorname{TV}(\bb{P},\bb{Q}),
\]
where $\operatorname{TV}$ is the
total variation distance.
This means
there can be two
distinct $\bb{P}$ and $\bb{Q}$ which need not be distinguished by
$\fr{D}_k$ but are distinguished in total variation, that is, $\fr
{D}_k$ induces a
topology that is weaker (or coarser) than the strong topology on
$M^1_+(\Cal{X})$. Therefore, it is of interest to understand the topology
induced by $\Vert\cdot\Vert_{\Cal{F}_H}$. The following result shows
that under
additional conditions
on $\Cal{K}$, $\Vert\cdot\Vert_{\Cal{F}_H}$ metrizes the weak-topology on
$M^1_+(\Cal{X})$. A special case of this result is already proved in
Sriperumbudur
\textit{et al.} (\cite{Sriperumbudur-10a}, Theorem~23),
for $\fr{D}_k$ when $\Cal{X}$ is compact.

\begin{thmm}\label{Thm:weak}
Let $\Cal{X}$ be a Polish space that is locally compact Hausdorff. Suppose
$(\bb{P}_{(n)})_{n\in\bb{N}}\subset M^1_+(\Cal{X})$ and $\bb{P}\in
M^1_+(\Cal{X})$.

\textup{(a)} If there
exists a $k\in\Cal{K}$ such that $k(\cdot,x)\in C_0(\Cal{X})$ for all
$x\in
\Cal{X}$ and
%
\begin{equation}
\int\int k(x,y) \,\mathrm{d}\mu(x) \,\mathrm{d}\mu(y)>0\qquad\forall\mu\in M_b(\Cal{X})
\setminus\{0\}.\label{Eq:dense}
\end{equation}
Then
\[
\llVert \bb{P}_{(n)}-\bb{P}\rrVert _{\Cal{F}_H}\rightarrow 0
\quad\Longrightarrow\quad \bb{P}_{(n)}\leadsto \bb{P} \qquad\mbox{as } n\rightarrow
\infty.
\]
\textup{(b)} If $\Cal{K}$ is uniformly bounded, that is, $\sup_{k\in\Cal{K},x\in
\Cal{X}}k(x,x)<\infty$ and satisfies the following property $(\mathrm{P})$:
\[
\forall x\in \Cal{X}, \forall\epsilon>0, \exists \mbox{ open } U_{x,\epsilon}
\subset \Cal{X} \mbox{ such that } \bigl\Vert k(\cdot,x)-k(\cdot,y)\bigr\Vert_{\Cal{H}_k}<
\epsilon, \forall k\in\Cal{K}, \forall y\in U_{x,\epsilon}.
\]
Then
\[
\bb{P}_{(n)}\leadsto\bb{P}\quad \Longrightarrow \quad \llVert
\bb{P}_{(n)}-\bb{P}\rrVert _{\Cal{F}_H}\rightarrow 0 \qquad\mbox{as } n
\rightarrow\infty.
\]
\end{thmm}

\begin{pf}
(a) Define $\Cal{H}_\ast$ to be the RKHS associated with the
reproducing kernel $k_\ast$. Suppose $k_\ast\in\Cal{K}$
satisfies $k_*(\cdot,x)\in C_0(\Cal{X}), \forall x\in\Cal{X}$. By
Steinwart and Christmann \cite{Steinwart-08}, Lemma~4.28
(see Sriperumbudur, Fukumizu and
Lanckriet \cite{Sriperumbudur-11c}, Theorem~5), it follows
that the inclusion
$\mathrm{id}\dvtx\Cal{H}_{\ast}\rightarrow C_0(\Cal{X})$ is well-defined and
continuous. In addition, since $k_\ast\in\Cal{K}$ satisfies
(\ref{Eq:dense}), it follows from Sriperumbudur, Fukumizu and
Lanckriet (\cite{Sriperumbudur-11a}, Proposition~4), that
$\Cal{H}_{*}$ is
dense in $C_0(\Cal{X})$ w.r.t. the uniform norm. We would like to mention
that this denseness result simply follows from the Hahn--Banach theorem
(Rudin \cite{Rudin-91}, Theorem~3.5 and the remark following Theorem~3.5),
which says that $\Cal{H}_{\ast}$ is dense in $C_0(\Cal{X})$ if and only if
\[
\Cal{H}^\perp_{\ast}:= \biggl\{\mu\in M_b(
\Cal{X})\dvt \int f \,\mathrm{d}\mu=0, \forall f\in\Cal{H}_{\ast} \biggr\}=\{0\}.
\]
It is easy to
check that $\Cal{H}^\perp_{\ast}=\{0\}$ if and only if
\[
\mu\mapsto\int k(\cdot,x) \,\mathrm{d}\mu(x), \qquad\mu\in M_b(\Cal{X})
\]
is injective, which is then
equivalent to (\ref{Eq:dense}). Since $\Cal{H}_\ast$ is dense in
$C_0(\Cal{X})$
in the uniform norm, for any $f\in C_0(\Cal{X})$ and every $\epsilon
>0$, there
exists a $g\in\Cal{H}_{*}$ such that $\Vert f-g\Vert_\infty\le
\epsilon$.
Therefore,
\begin{eqnarray*}
\llvert \bb{P}_{(n)}f-\bb{P}f\rrvert &=&\bigl\llvert
\bb{P}_{(n)}(f-g)+\bb{P} (g-f)+(\bb{ P } _ { (n) } g-\bb{ P } g)
\bigr\rrvert
\\
&\le& \bb{P}_{(n)}|f-g|+\bb{P}|f-g|+\llvert
\bb{P}_{(n)}g-\bb{P}g\rrvert
\\
&\le& 2\epsilon+\llvert \bb{P}_{(n)}g-\bb{P}g\rrvert
\\
&\le& 2\epsilon+\Vert g\Vert_{\Cal{H}_{*}}\fr{D}_{k_*}(
\bb{P}_{(n)},\bb{P})
\le2\epsilon+\Vert g
\Vert_{\Cal{H}_{*}}\llVert \bb{P}_{(n)}-\bb{P}\rrVert _{\Cal{F}_H}
.
\end{eqnarray*}
Since $\epsilon>0$ is arbitrary, $\Vert g\Vert_{\Cal{H}_\ast}<\infty$
and $\Vert\bb{P}_{(n)}-\bb{P}\Vert_{\Cal{F}_H}\rightarrow0$ as
$n\rightarrow
\infty$, we
have $\bb{P}_{(n)}f\rightarrow\bb{P}f$ for all $f\in C_0(\Cal{X})$ as
$n\rightarrow
\infty$, which means $\bb{P}_{(n)}$ converges to $\bb{P}$ vaguely.
Since vague
convergence and weak convergence are equivalent on
the set of Radon probability measures (Berg, Christensen and
Ressel \cite{Berg-84},
page 51), which is same
as $M^1_+(\Cal{X})$ since $\Cal{X}$ is Polish, the
result
follows.

(b) Since $\Cal{K}$ is uniformly bounded, it is easy to see that
\[
\sup_{f\in\Cal{F}_H,x\in
\Cal{X}}\bigl|f(x)\bigr|=\sup_{k\in\Cal{K},x\in\Cal{X}}\sup
_{\Vert
f\Vert_{\Cal{H}_k\le1}}\bigl|\bigl\langle f,k(\cdot,x)\bigr\rangle_{\Cal{H}_k}\bigr|=
\sup_{k\in\Cal{K},x\in
\Cal{X}}\sqrt{k(x,x)}<\infty,
\]
which means $\Cal{F}_H$ is uniformly bounded. Now, for a given $x\in\Cal
{X}$ and
$\epsilon>0$, pick some $y\in U_{x,\epsilon}$ such that $\Vert
k(\cdot,x)-k(\cdot,y)\Vert_{\Cal{H}_k}<\epsilon$ for all $k\in\Cal{K}$. This
means, for any $f\in\Cal{F}_H$,
\[
\bigl|f(x)-f(y)\bigr|\le\Vert f\Vert_{\Cal{H}_k}\bigl\Vert k(\cdot,x)-k(\cdot,y)
\bigr\Vert_{\Cal{H}_k}< \epsilon,
\]
which
implies $\Cal{F}_H$ is equicontinuous on $\Cal{X}$. The result
therefore follows
from Dudley \cite{Dudley-02}, Corollary~11.3.4, which shows that if
$\bb{P}_{(n)}\leadsto\bb{P}$ then $\bb{P}_{(n)}$ converges to
$\bb{P}$ in $\Vert\cdot\Vert_{\Cal{F}_H}$ as $n\rightarrow\infty$.
\end{pf}

Comparing (\ref{Eq:charac}) and (\ref{Eq:dense}), it is clear that one requires
a stronger condition for weak convergence than for $\Vert\cdot\Vert
_{\Cal{F}_H}$
being just a metric.
However, these conditions can be shown to be equivalent for bounded continuous
translation invariant kernels on $\bb{R}^d$, that is, kernels of the
type in
(\ref{Eq:Bochner}). This is because if $k$ satisfies (\ref
{Eq:Bochner}), then
\[
\int\int k(x,y) \,\mathrm{d}\mu(x) \,\mathrm{d}\mu(y)=\Vert \widehat{\mu}\Vert^2_{L^2(\bb{R}^d,\Upsilon)}
\]
and, therefore, (\ref{Eq:dense})
holds
if and only if $\operatorname{supp}(\Upsilon)=\bb{R}^d$, which is indeed the
characterization for $k$ being characteristic. Here, $\widehat{\mu}$ represents
the
Fourier transform of $\mu$ defined as $\widehat{\mu}(\omega)=\int
\mathrm{e}^{\sqrt{-1}\omega^Tx} \,\mathrm{d}\mu(x), \omega\in\bb{R}^d$. Therefore, for $\Cal
{K}$ in
Example~\ref{Exm:charac},
convergence in $\Vert\cdot\Vert_{\Cal{F}_H}$ implies weak convergence on
$M^1_+(\bb{R}^d)$.
However, for the converse to hold, $\Cal{K}$ has to satisfy $(\mathrm{P})$ in
Theorem~\ref{Thm:weak}, which is captured in the following example.

\begin{exm}\label{Exm:weak}
The following families of kernels satisfy the conditions (\ref
{Eq:dense}), $(\mathrm{P})$
and the uniform boundedness condition of Theorem~\ref{Thm:weak} so that
$\Vert\cdot\Vert_{\Cal{F}_H}$ metrizes the weak topology on $M^1_+(\bb{R}^d)$.
\begin{enumerate}
\item
$\Cal{K}= \{\mathrm{e}^{-\sigma\Vert
x-y\Vert^2_2}, x,y\in\bb{R}^d \dvt \sigma\in(0,a],
a<\infty \}$;
\item
$\Cal{K}= \{\mathrm{e}^{-\sigma\Vert
x-y\Vert_1}, x,y\in\bb{R}^d \dvt \sigma\in(0,a],
a<\infty \}$;
\item
$\Cal{K}= \{\frac{2 ({c}/{2} )^{\beta-
{d}/{2}}}{\Gamma(\beta-{d}/{2})} \Vert
x-y\Vert^{\beta-{d}/{2}}_2\frak{K}_{{d}/{2}-\beta} (c\Vert
x-y\Vert_2 ), x,y\in\bb{R}^d, \beta>\frac{d}{2} \dvt c\in
(0,a], a<\infty \}$;
\item
$\Cal{K}= \{ (1+\llVert \frac{x-y}{c}\rrVert ^2_2
)^{-\beta},
x,y\in\bb{R}^d, \beta>0 \dvt c\in[a,\infty), a>0  \}$;
\item
$\Cal{K}= \{\prod^d_{i=1} (1+\frac{|x_i-y_i|^2}{c^2_i}
 )^{-1}, x,y\in\bb{R}^d \dvt c_i\in
[a_i,\infty), a_i>0, \forall i=1,\ldots,d \}$;
%
\item$\Cal{K}$ in Theorem~\ref{thmm:examples}(b) and (c).
\end{enumerate}
\end{exm}

\textit{Rate of convergence of} $\bb{P}_n$ \textit{to} $\bb{P}$ in $\Vert
\cdot\Vert_{\Cal{F}_H}$: The following result, which is
proved in Section~\ref{subsec:consistency}, presents an
exponential inequality for the tail
probability of $\Vert\bb{P}_n-\bb{P}\Vert_{\Cal{F}_H}$, which in combination
with Theorem~\ref{Thm:weak} provides a convergence rate for the weak convergence
of $\bb{P}_n$ to $\bb{P}$.

\begin{thmm}\label{Thm:consistency}
Let $X_1,\ldots,X_n$ be random samples drawn i.i.d. from $\bb{P}$
defined on
a measurable space $\Cal{X}$. Assume there exists $\nu>0$ such
that $\sup_{k\in\Cal{K},x\in\Cal{X}}k(x,x)\le\nu$. Then for every $\tau
>0$, with
probability at least $1-2\mathrm{e}^{-\tau}$ over the choice of
$(X_i)^n_{i=1}\sim\bb{P}^n$, 
%
\begin{equation}
\Vert\bb{P}_n-\bb{P}\Vert_{\Cal{F}_H}\le 4\sqrt{2}\sqrt{
\inf_{\alpha>0} \biggl\{\alpha+\frac{4\mathrm{e}}{
n}\int
^{2\nu}_{\alpha} \log2 \Cal{ N } (\Cal{ K }, \rho ,
\epsilon) \,\mathrm{d}\epsilon \biggr\}}+\frac{3\sqrt{2\nu}(\sqrt{
2}+\sqrt{ \tau
} ) } { \sqrt{ n } }, \label{Eq:empirical-consistent}
\end{equation}
where for any $k_1,k_2\in\Cal{K}$,
%
\begin{equation}
\rho(k_1,k_2):=\sqrt{\frac{2}{n^2}\sum
^n_{i<j} \bigl(k_1(X_i,
X_j)-k_2(X_i,X_j)
\bigr)^2 }.\label{Eq:rho}
\end{equation}
In particular, if
there exists finite positive
constants $A$
and $\beta$ (that are not dependent on $n$) such that
%
\begin{equation}
\log\Cal{N}(\Cal{K},\rho,\epsilon)\le A \biggl(\frac{2\nu}{\epsilon}
\biggr)^\beta, \qquad 0<\epsilon<2\nu \label{Eq:entropynumber}
\end{equation}
then there exists constants $(D_i)^{5}_{i=1}$ (dependent
only on $A$, $\beta$, $\nu$, $\tau$ and not on $n$) such that
%
\begin{equation}
\bb{P}^n \bigl( \bigl\{(X_1,\ldots,X_n)\in
\Cal{X}^n\dvt \Vert \bb{P}_n-\bb{P}\Vert_{\Cal{F}_H}>
\lambda(A,\beta,\nu, \tau) \bigr\} \bigr)\le 2\mathrm{e}^{-\tau},\label{Eq:concentration-empirical}
\end{equation}
where
%
\begin{equation}
\label{Eq:rates-empirical}\lambda(A,\beta,\nu,\tau)\le \cases{ %
\displaystyle\frac{D_1}{
 \sqrt{ n } }, &\quad $0<\beta<1,$
\vspace*{2pt}\cr
\displaystyle D_2\sqrt{\frac{\log
n}{n}}+\frac{D_3}{\sqrt{n}},&\quad $\beta=1,$
\vspace*{2pt}\cr
\displaystyle \frac{D_4}{n^{
1/2\beta} } +\frac{
D_5} {
\sqrt{ n } }, &\quad $\beta>1$.}
\end{equation}
%
\end{thmm}

\begin{rem}\label{Rem:supp}
(i) If $\Cal{K}$ is a VC-subgraph, by van der Vaart and Wellner
(\cite{Vaart-96}, Theorem~2.6.7),
there exists finite constants
$B$
and $\alpha$ (that are not dependent on $n$) such that
$\Cal{N}(\Cal{K},\rho,\epsilon)\le
B (2\nu/\epsilon )^\alpha, 0<\epsilon<2\nu$, which implies there
exists $A$ and $0<\beta<1$ such that (\ref{Eq:entropynumber}) holds.
By Theorem~\ref{Thm:consistency}, this implies $\Vert
\bb{P}_n-\bb{P}\Vert_{\Cal{F}_H}=\mathrm{O}_\bb{P}(n^{-1/2})$, and hence by
the Borell--Cantelli lemma, $\Vert
\bb{P}_n-\bb{P}\Vert_{\Cal{F}_H}\stackrel{\mathrm{a.s.}}{\rightarrow}0$ as
$n\rightarrow
\infty$. Therefore, it is clear that if $\Cal{K}$ is a uniformly
bounded VC-subgraph, then
{\renewcommand{\theequation}{Q}\label{eqQ}
\begin{equation}
\bb{P}_n\leadsto\bb{P} \qquad\mbox{a.s. as } n\rightarrow\infty
\end{equation}}
\hspace*{-2pt}with a rate of convergence of
$n^{-1/2}$. Ying and Campbell (\cite{Ying-10}, Lemma~2)~-- also see
Proposition~\ref{pro:vc}~-- showed
that the Gaussian kernel family in Example~\ref{Exm:charac} is a
VC-subgraph (in fact, using the proof idea in Lemma~2 of Ying and
Campbell \cite{Ying-10} it can
be easily shown that Laplacian and inverse multiquadric
families are also VC-subgraphs) and, therefore, these kernel
classes ensure
\eqref{eqQ} with a convergence rate of $n^{-1/2}$. Instead of directly
showing the
radial basis function (RBF) class in Example~\ref{Exm:charac} to be a
VC-subgraph, Ying and Campbell (\cite{Ying-10}, see the proof of
Corollary~1), bounded
the expected suprema of the
Rademacher chaos process of degree 2, that is,
%
\setcounter{equation}{15}
\begin{equation}
U_n\bigl(\Cal{K};(X_i)^n_{i=1}
\bigr):=\bb{E}_\varepsilon\sup_{k\in
\Cal{K}} \Biggl\llvert \sum
^n_
{i<j}\varepsilon_i
\varepsilon_j k(X_i,X_j)\Biggr\rrvert
\label{Eq:chaos}
\end{equation}
indexed by the RBF class,
$\Cal{K}$,
by that of the Gaussian class (see (\ref{Eq:exm2-U}) in
the proof of Theorem~\ref{thmm:examples}(a)) and since the Gaussian
class is
a VC-subgraph, we obtain
$U_n(\Cal{K};(X_i)^n_{i=1})=\mathrm{O}_\bb{P}(n)$ for the RBF class. We also
show in
Theorem~\ref{thmm:examples}(d) that $U_n(\Cal{K};(X_i)^n_{i=1})=\mathrm{O}_\bb
{P}(n)$ for
the Mat\'{e}rn kernel family in Example~\ref{Exm:charac}. Using these
bounds in
(\ref{Eq:Mc-4}) and following through the proof of Theorem~\ref{Thm:consistency}
yields that \eqref{eqQ} holds with a convergence rate of $n^{-1/2}$. Note
that this
rate
of convergence is faster than the rate of $n^{-1/d}, d\ge3$ that is
obtained with $\Vert\cdot\Vert_{\Cal{F}_{\mathrm{BL}}}$
(Sriperumbudur \textit{et al.}
\cite{Sriperumbudur-12}, Corollary~3.5). Here, $(\varepsilon_i)^n_{i=1}$ denote
i.i.d. Rademacher random variables.

(ii) We would like to mention that Theorem~\ref{Thm:consistency}
is a
variation
on Theorem~7 in Sriperumbudur
\textit{et al.} \cite{Sriperumbudur-09c} where $U_n(\Cal{K};(X_i)^n_{i=1})$ is
bounded by the
entropy integral in de la Pe{\~{n}}a and
Gin{\'{e}} (\cite{delaPena-99}, Corollary~5.1.8), with the lower limit of
the
integral being zero unlike in Theorem~\ref{Thm:consistency}. This
generalization (see Mendelson \cite{Mendelson-02}, Srebro, Sridharan and
Tewari \cite{Srebro-10} for a similar result to bound
the expected
suprema of empirical processes) allows to handle the polynomial growth of
entropy number for $\beta\ge1$ compared to Sriperumbudur
\textit{et al.} (\cite{Sriperumbudur-09c}, Theorem~7).
Also, compared to Sriperumbudur
\textit{et al.} (\cite{Sriperumbudur-09c}, Theorem~7), we provide explicit
constants in Theorem~\ref{Thm:consistency}.%

\end{rem}

\section{Main results}\label{Sec:mainresult}
In this section, we present our main results of demonstrating the
optimality of
the kernel density estimator in $\Vert\cdot\Vert_{\Cal{F}_H}$ through an
exponential concentration inequality in Section~\ref{subsec:expinequality} and a
uniform central limit theorem in Section~\ref{subsec:uclt-main}.

\subsection{An exponential concentration inequality for
\texorpdfstring{$\Vert\bb{P}_n\ast K_h-\bb{P}\Vert_{\Cal{F}_H}$}{$||P_n\ast K_h-P||_{F_H}$}}\label{subsec:expinequality}

In this section, we present an exponential inequality for
the weak convergence of kernel density estimator on $\bb{R}^d$ using which
we show
the optimality of the kernel density estimator in both strong and weak
topologies. This is carried out using the
ideas in Section~\ref{Sec:rkhs}, in particular through bounding the tail
probability of $\Vert\bb{P}_n\ast K_h-\bb{P}\Vert_{\Cal{F}_H}$, where
$\bb{P}_n\ast K_h$ is the kernel density estimator. Since $\Vert
\bb{P}_n\ast K_h-\bb{P}\Vert_{\Cal{F}_H}\le
\Vert\bb{P}_n\ast K_h-\bb{P}_n\Vert_{\Cal{F}_H}+\Vert
\bb{P}_n-\bb{P}\Vert_{\Cal{F}_H}$, the result follows from
Theorem~\ref{Thm:consistency} and bounding the tail probability of
$\Vert
\bb{P}_n\ast
K_h-\bb{P}_n\Vert_{\Cal{F}_H}$, again through an application of McDiarmid's
inequality, which is captured in Theorem~\ref{Thm:estim-to-p}.

\begin{thmm}\label{Thm:estim-to-p}
Let $\bb{P}$ have a density $p\in W^s_1(\Cal{X})$, $s\in\bb{N}$ with
$(X_i)^n_{i=1}$ being samples drawn
i.i.d. from $\bb{P}$ defined on an open subset $\Cal{X}$ of $\bb{R}^d$.
Assume $\Cal{K}$
satisfies the following:
\begin{longlist}[(iii)]
\item[(i)] Every $k\in\Cal{K}$ is translation invariant, that is,
$k(x,y)=\psi(x-y), x,y\in\Cal{X}$, where $\psi$ is a positive definite
function on $\Cal{X}$;
\item[(ii)] For every
$k\in\Cal{K}$, $\partial^{\alpha,\alpha}k\dvtx\Cal{X}\times\Cal{
X}\rightarrow\bb{R}$ exists and is
continuous for all multi-indexes $\alpha\in\bb{N}^d_0$ with $|\alpha
|\le m$,
$m\in\bb{N}$, where
$\partial^{\alpha,\alpha}:=\partial^{\alpha_1}_1\cdots\partial^{\alpha_d}
_d\partial^{\alpha_1}_{1+d}\cdots\partial^{\alpha_d}_{2d}$;
\item[(iii)] $\exists\nu>0$ such that $\sup_{k\in\Cal{K}, x\in
\Cal{X}}k(x,x)\le\nu<\infty$;
\item[(iv)] For
$|\alpha|=m\wedge r$, $\exists\nu_\alpha>0$ such
that $\sup_{k^\prime\in\Cal{K}_\alpha,x\in
\Cal{X}}k^\prime(x,x)\le\nu_\alpha<\infty$ where
$\Cal{K}_\alpha:=\{\partial^{\alpha,\alpha}k \dvt k\in\Cal{K}\}$,
\end{longlist}
where $1\le r\le m+s$, $r\in\bb{N}$ is the order of the smoothing kernel
$K$. Then for every $\tau>0$, with
probability at least $1-2\mathrm{e}^{-\tau}$ over the choice of $(X_i)^n_{i=1}$,
there exists finite constants
$(A_i)^2_{i=1}$ and $(B_i)^2_{i=1}$ (dependent only on $m$, $r$, $s$, $p$,
$K$, $\tau$, 
$\nu$, $\nu_\alpha$ and not on $n$) such that
%
\begin{eqnarray}
\Vert K_h\ast\bb{P}_n-\bb{P}_n
\Vert_{\Cal{F}_H}\le4\sqrt{2}h^{m\wedge
r}\sum
_{|\alpha|=m\wedge r}\Theta(\alpha)\sqrt{\Cal{ T } (\Cal { K}
_\alpha,\rho_\alpha,\nu_\alpha)}+\frac{
A_1h^{m\wedge r}}{\sqrt{n}}+A_2h^{r}\label{Eq:final-talagrand}
\end{eqnarray}
and
%
\begin{eqnarray}
\label{Eq:final-talagrand-1} \Vert K_h\ast\bb{P}_n-\bb{P}
\Vert_{\Cal{F}_H}&\le& 4\sqrt{2}h^{m\wedge
r}\sum
_{|\alpha|=m\wedge r}\Theta(\alpha)\sqrt{\Cal{ T } (\Cal { K }
_\alpha,\rho_\alpha,\nu_\alpha)}+4\sqrt{2}\sqrt{
\Cal{T}(\Cal{K},\rho,\nu ) }
\nonumber
\\[-8pt]
\\[-8pt]
\nonumber
& &{} +\frac{B_1h^{m\wedge r}}{\sqrt{n}}+\frac{B_2}{\sqrt{n}}+A_2h^{r},
\end{eqnarray}
where $m\wedge r:=\min(m,r)$,
\begin{eqnarray*}
\Cal{T}(\Cal{K}_\alpha,\rho_\alpha,\nu_\alpha)&:=&\inf
_{\delta>0} \biggl\{\delta+\frac{4\mathrm{e}
} { n } \int
^ {
2\nu_\alpha}_\delta\log2\Cal{N}(\Cal{K}_\alpha,
\rho_\alpha,\epsilon) \,\mathrm{d}\epsilon \biggr\},\\
\Theta(\alpha)&=&\int
\frac{\prod^d_{i=1}|t_i|^{\alpha_i}}{\prod^d_{i=1}\alpha_i!}\bigl|K(t)\bigr| \,\mathrm{d}t,
\end{eqnarray*}
$\rho$ is defined as in (\ref{Eq:rho}) and for any $k_1,k_2\in\Cal
{K}_\alpha$,
\[
\rho_\alpha(k_1,k_2)=\sqrt{
\frac{2}{n^2}\sum^n_{i<j}
\bigl(k_1(X_i, X_j)-k_2(X_i,X_j)
\bigr)^2 }.
\]
In addition, suppose there exists finite constants $C_\alpha$, $C_\Cal{K}$,
$\omega_\alpha$ and $\omega_\Cal{K}$ (that are not dependent on~$n$)
such that
%
\begin{equation}
\log\Cal{N}(\Cal{K}_\alpha,\rho_\alpha,\epsilon)\le
C_\alpha \biggl(\frac{2\nu_\alpha}{\epsilon} \biggr)^{\omega_\alpha},\qquad 0<\epsilon<2
\nu_\alpha, \mbox{ for } |\alpha|=m\wedge r\label{Eq:entropy-alpha-kde}
\end{equation}
and
%
\begin{equation}
\log\Cal{N}(\Cal{K}, \rho,\epsilon)\le C_\Cal{K} \biggl(
\frac{2\nu}{\epsilon} \biggr)^{\omega_\Cal{K}},\qquad 0<\epsilon<2\nu.\label{Eq:entropy-full-kde}
\end{equation}
Define $\omega_\star:=\max\{\omega_\alpha\dvt |\alpha|=m\wedge r\}$. If
%
\begin{equation}
\sqrt{ (\log n )^{\mathds{1}_{\{\omega_\star=1\}}}} n^{{((\omega_\star\vee1)-1)}/{(2\omega_\star)}}h^{m\wedge r}\rightarrow0,\qquad
\sqrt{n}h^{r}\rightarrow 0 \mbox{ as } h\rightarrow 0, n\rightarrow
\infty,\label{Eq:condition-h-kde}
\end{equation}
then
%
\begin{equation}
\Vert\bb{P}_n\ast K_h-\bb{P}_n
\Vert_{\Cal{F}_H}=\mathrm{o}_{\mathrm{a.s.}}\bigl(n^{-1/2}\bigr)\label{Eq:small-oh-kde}
\end{equation}
and, therefore,
%
\begin{equation}
\Vert\bb{P}_n\ast K_h-\bb{P}\Vert_{\Cal{F}_H}=\mathrm{O}_{\mathrm{a.s.}}
\bigl(\sqrt{(\log n)^{\mathds{1}_{\{\omega_\Cal{K}=1\}}}}
n^{-{(\omega_\Cal{K}\wedge
1)}/{(2\omega_\Cal{K})}} \bigr).\label{Eq:rates-kde}
\end{equation}
\end{thmm}

\begin{rem}\label{Rem:main}
(i) Theorem~\ref{Thm:estim-to-p} shows that the kernel density estimator
with bandwidth, $h$ converging to zero sufficiently fast as given by conditions
in (\ref{Eq:condition-h-kde}) is within $\Vert\cdot\Vert_{\Cal
{F}_H}$-ball of
size $\mathrm{o}_{\mathrm{a.s.}}(n^{-1/2})$ around $\bb{P}_n$ and behaves like $\bb{P}_n$
in the
sense that it converges to $\bb{P}$ in $\Vert\cdot\Vert_{\Cal{F}_H}$ at a
dimension independent rate of $n^{-1/2}$ (see
Theorem~\ref{Thm:consistency}) as long as $\Cal{K}$ is not too big,
which is
captured by $\omega_\Cal{K}<1$ in (\ref{Eq:entropy-full-kde}). In
addition, if
$\Cal{K}$ satisfies the conditions in Theorem~\ref{Thm:weak}, then the kernel
density estimator converges weakly to $\bb{P}$ a.s. at the rate of $n^{-1/2}$.
Since we are interested in the optimality of $\bb{P}_n\ast K_h$ in both strong
and weak topologies, it is interest to understand
whether the asymptotic behavior in (\ref{Eq:small-oh-kde}) holds for
$h^\ast\simeq
n^{-1/(2s+d)}$ where $h^\ast$ is the optimal bandwidth (of the kernel
density estimator) for the estimation of $p$ in $L^1$ norm. It is easy
to verify
that if
%
\begin{equation}
r>s+\frac{d}{2} \quad\mbox{and}\quad m>\frac{(2s+d)(\omega_\star-1)}{2\omega_\star}\vee\frac{ d } {
2}\label{Eq:m-choice}
\end{equation}
then $h^\ast$ satisfies (\ref{Eq:condition-h-kde}) and,
therefore, $\Vert
K_{h^\ast}\ast\bb{P}_n-\bb{P}_n\Vert_{\Cal{F}_H}=\mathrm{o}_ { \mathrm{a.s.} } (n^ { -1/2
})$ so that $\Vert K_{h^\ast}
\ast\bb{P}_n-\bb{P}\Vert_{\Cal{F}_H}=\mathrm{O}_{\mathrm{a.s.}}(n^{-1/2})$ if $\Cal{K}$
is not
too big. This means for an appropriate choice of $\Cal{K}$ (i.e.,
$\omega_\Cal{K}<1$),
the kernel density estimator $K_h\ast\bb{P}_n$ with $h=h^\ast$ is
optimal in
both weak (induced by $\Vert\cdot\Vert_{\Cal{F}_H}$) and strong topologies
unlike
$\bb{P}_n$ which is only an optimal estimator of $\bb{P}$ in the weak
topology. In
Theorem~\ref{thmm:examples}, we present examples of $\Cal{K}$ for which the
kernel density estimator is optimal in both strong and weak topologies (induced
by $\Vert\cdot\Vert_{\Cal{F}_H}$). Under the conditions in
(\ref{Eq:m-choice}), it can be shown that $h^{\ast\ast}\simeq(n/\log
n)^{-1/(2s+d)}$, which is the optimal bandwidth for the estimation of
$p$ in
sup-norm, also satisfies
(\ref{Eq:condition-h-kde}) and, therefore, (\ref{Eq:small-oh-kde}) and
(\ref{Eq:rates-kde}) hold for $h=h^{\ast\ast}$.

(ii) The
condition on $r$ in (\ref{Eq:m-choice}) coincides with the one obtained for
$\{\mathds{1}_{(-\infty,t]}\dvt t\in\bb{R}\}$ in Bickel and Ritov \cite
{Bickel-03} and bounded
variation and Lipschitz classes with $d=1$ in Gin{\'{e}} and Nickl \cite{Gine-08a},
see Remarks 7 and
8. This condition shows that for the kernel density
estimator with
bandwidth $h^\ast$ to be optimal in the weak topology (assuming $\omega
_\ast\le
1$, $\omega_\Cal{K}<1$ and $\Cal{K}$ satisfying the conditions
in Theorem~\ref{Thm:weak}), the order of the kernel has to be chosen
higher by
$\frac{d}{2}$ than the usual (the usual being estimating $p$ using the
kernel density estimator in $L^1$-norm). An interesting aspect of the second
condition in (\ref{Eq:m-choice}) is that the smoothness of kernels in
$\Cal{K}$
should increase with either $d$ or the size of $\Cal{K}_\alpha$ for
$\bb{P}_n\ast K_h$ with $h=h^\ast$ or $h=h^{\ast\ast}$ to lie in
$\Vert\cdot\Vert_{\Cal{F}_H}$-ball of
size $\mathrm{o}_{\mathrm{a.s.}}(n^{-1/2})$ around $\bb{P}_n$. If $\Cal{K}_\alpha$ is large,
that is, $\omega_\star>1$, then the choice of $m$ depends on the
smoothness $s$ of
$p$ and therefore $s$ has to be known a priori to pick $k$
appropriately. Also,
since the smoothness of kernels in
$\Cal{K}$ should grow with $d$ for
(\ref{Eq:small-oh-kde}) to hold, it implies that the rate in
(\ref{Eq:rates-kde}) holds under
weaker metrics on the space of probabilities. On the other hand, it is
interesting to note that as long $\Cal{K}$ satisfies the conditions in
Theorem~\ref{Thm:weak}, each of these weaker metrics metrize the weak
topology.

(iii) If $\Cal{K}$ is singleton, then it is easy to verify that the
first terms in (\ref{Eq:final-talagrand}) and (\ref{Eq:final-talagrand-1})
are of order $h^{m\wedge r}/\sqrt{n}$~-- use the idea in
Remark~\ref{rem:pn-p-remark}(ii) for (\ref{Eq:tempo-proof})~-- and
the second
term in (\ref{Eq:final-talagrand-1}) is of order $n^{-1/2}$ (see
(\ref{Eq:Mc-5})). Therefore, the claims of Theorem~\ref{Thm:estim-to-p}
hold as
if $\omega_\star\le1$ and $\omega_\Cal{K}\le1$.
\end{rem}

\begin{pf}
Note that
%
\begin{equation}
\Vert K_h\ast\bb{P}_n-\bb{P}_n
\Vert_{\Cal{F}_H}\le \bigl\Vert K_h\ast(\bb{P}_n-
\bb{P})-(\bb{P}_n-\bb{P})\bigr\Vert_{\Cal{F}_H}+\Vert K_h
\ast\bb{P}- \bb{ P }\Vert_{\Cal{F}_H}.\label{Eq:split}
\end{equation}

(a) \textit{Bounding} $\Vert
K_h\ast(\bb{P}_n-\bb{P})-(\bb{P}_n-\bb{P})\Vert_{\Cal{F}_H}$:

By defining $\Cal{B}_0:=\Vert
K_h\ast(\bb{P}_n-\bb{P})-(\bb{P}_n-\bb{P})\Vert_{\Cal{F}_H}$, we have
\begin{eqnarray*}
\Cal{B}_0&=&\sup_{f\in
\Cal{F}_H} \biggl\llvert \int
f(x) \,\mathrm{d}\bigl(K_h\ast(\bb{P}_n-\bb{P})\bigr) (x)-\int
f(x) \,\mathrm{d}(\bb{P}_n-\bb{P}) (x)\biggr\rrvert
\\
&=&\sup_{f\in\Cal{F}_H}\biggl\llvert \int(f\ast
K_h-f) \,\mathrm{d}(\bb{P}_n-\bb{P})\biggr\rrvert
=\Vert\bb{P}_n-\bb{P}\Vert_\Cal{G},
\end{eqnarray*}
where $\Cal{G}:=\{f\ast K_h-f\dvt f\in\Cal{F}_H\}$. We now
obtain a bound on $\Vert\bb{P}_n-\bb{P}\Vert_\Cal{G}$ through an application
of McDiarmid's inequality. To this end, consider
%
\begin{equation}
\Vert g\Vert_\infty\le\Vert f\ast K_h-f
\Vert_\infty=\sup_{x\in
\Cal{X}}\biggl\llvert \int
\bigl(f(x+ht)-f(x) \bigr)K(t) \,\mathrm{d}t\biggr\rrvert .\label{Eq:g}
\end{equation}
Since every $k\in\Cal{K}$ is $m$-times differentiable, by Steinwart and
Christmann \cite{Steinwart-08}, Corollary~4.36, every $f\in\Cal{F}_H$ is $m$-times continuously
differentiable and for any $k\in\Cal{K}$, $f\in\Cal{H}_k$,
%
\begin{equation}
\bigl\llvert \partial^\alpha f(x)\bigr\rrvert \le \Vert f
\Vert_{\Cal{H}_k}\sqrt{\partial^{\alpha,\alpha}k(x,x)},\qquad x\in\Cal{X}
\label{Eq:diff-inequality}
\end{equation}
for $\alpha\in\bb{N}^d_0$ with $|\alpha|\le m$. Therefore, Taylor series
expansion of
$f(x+th)$ around $x$ gives
%
\begin{eqnarray}
\label{Eq:taylor}f(x+th)-f(x)&=&\sum_{0<|\alpha|\le
(m\wedge r)-1}h^{|\alpha|}
\Lambda_\alpha(t) \partial^{\alpha}f(x)
\nonumber
\\[-8pt]
\\[-8pt]
\nonumber
& &\hspace*{55pt}{} +h^{m\wedge r}\sum_{|\alpha|=m\wedge r}
\Lambda_\alpha(t)\partial^\alpha f(x+hD_\theta t),
\end{eqnarray}
where
\[
\Lambda_\alpha(t):=\frac{\prod^d_{i=1}t^{\alpha_i}_i}{\prod^d_{i=1}\alpha_i!},
\]
$D_\theta=\operatorname{diag}(\theta_1,\ldots,\theta_d)$ and $0<\theta_i<1$ for
all $i=1,\ldots,d$. Using (\ref{Eq:taylor}) in (\ref{Eq:g}) along with the
regularity of $K$, we have
%
\begin{eqnarray}
\label{Eq:g1} \Vert g\Vert_\infty&\le& \Vert f\ast
K_h-f\Vert_\infty\le h^{m\wedge r}\sup
_{x\in
\Cal{X}}\biggl\llvert \sum_{|\alpha|=m\wedge r}\int
\Lambda_\alpha(t) K(t) \partial^\alpha f(x+hD_\theta
t) \,\mathrm{d}t\biggr\rrvert
\nonumber
\\[-8pt]
\\[-8pt]
\nonumber
&\le&h^{m\wedge r}\sup_{x\in\Cal{X}}\sum
_{|\alpha|=m\wedge r}\int \Lambda_\alpha\bigl(|t|\bigr) \bigl|K(t)\bigr| \bigl\llvert
\partial^\alpha f(x+hD_\theta t)\bigr\rrvert \,\mathrm{d}t,
\end{eqnarray}
where $|t|_i:=|t_i|, \forall i=1,\ldots,d$. Using
(\ref{Eq:diff-inequality}) in (\ref{Eq:g1}), for any $g\in\Cal{G}$, we get
%
\begin{eqnarray}
\label{Eq:uniformbound} \Vert g\Vert_\infty&\le& \Vert f\ast
K_h-f\Vert_\infty
\nonumber
\\
&\le& h^{m\wedge r}\sup_{k\in\Cal{K},x\in
\Cal{X}}\sum
_{|\alpha|=m\wedge r}\int \Lambda_\alpha\bigl(|t|\bigr) \bigl|K(t)\bigr| \sqrt{
\partial^{\alpha,\alpha}k(x+hD_\theta t,x+hD_\theta t)} \,\mathrm{d}t
\nonumber
\\[-8pt]
\\[-8pt]
\nonumber
&\stackrel{\mathrm{(i)}} {\le} &h^{m\wedge
r}\sum_{|\alpha|=m\wedge r}
\sqrt{\sup_{k\in\Cal{K}, x\in
\Cal{X}}\partial^{\alpha,\alpha}k(x, x) } \int
\Lambda_\alpha\bigl(|t|\bigr) \bigl|K(t)\bigr| \,\mathrm{d}t
\\
&=& L_{m,r}h^{m\wedge r},\nonumber
\end{eqnarray}
where
\[
L_{m,r}:=\sum_{|\alpha|=m\wedge r}\sqrt{
\nu_\alpha} \Theta(\alpha)<\infty.
\]
%
Now, let us consider
%
\begin{eqnarray}
\label{Eq:symm} \bb{E}\Vert \bb{P}_n-\bb{P}\Vert_\Cal{G}
\stackrel{(\star)} {\le}\frac{2}{n} \bb {E}\sup_{
g\in\Cal{G}}
\Biggl\llvert \sum^n_{ i=1 }
\varepsilon_i g(X_i)\Biggr\rrvert =\frac{2}{n}
\bb{E}\sup_{f\in\Cal{F}_H} \Biggl\llvert \sum
^n_ { i=1
} \varepsilon_i(f\ast
K_h-f) (X_i)\Biggr\rrvert ,
\end{eqnarray}
where we have invoked the symmetrization inequality (van der Vaart and
Wellner \cite{Vaart-96}, Lemma~2.3.1) in $(\star)$ with
$(\varepsilon_i)^n_{i=1}$ being the Rademacher random variables. By McDiarmid's
inequality, for any $\tau>0$, with probability at least
$1-\mathrm{e}^{-\tau}$,
%
\begin{eqnarray}
\label{eq:mcdiarmid} \Vert\bb{P}_n-\bb{P}\Vert_\Cal{G}&\le&
\bb{E}\Vert \bb{P}_n-\bb{P}\Vert_\Cal{G}+\Vert g
\Vert_\infty\sqrt{\frac{2\tau}{n}}
\nonumber
\\[-8pt]
\\[-8pt]
\nonumber
&\le&\frac{2}{n} \bb{E}\sup_{f\in\Cal{F}_H} \Biggl\llvert
\sum^n_ { i=1
} \varepsilon_i(f
\ast K_h-f) (X_i)\Biggr\rrvert +L_{m,r}h^{m\wedge
r}
\sqrt{\frac{2\tau}{n}},
\end{eqnarray}
where (\ref{Eq:uniformbound}) and (\ref{Eq:symm}) are used in
(\ref{eq:mcdiarmid}). Define
\[
R_n(\Cal{F}_H):=\bb{E}_\varepsilon\sup
_{f\in\Cal{F}_H} \Biggl\llvert \frac{1}{n}\sum
^n_{i=1
} \varepsilon_i(f\ast
K_h-f) (X_i)\Biggr\rrvert ,
\]
where $\bb{E}_\varepsilon$ denotes
the
expectation
w.r.t. $(\varepsilon_i)^n_{i=1}$ conditioned on $(X_i)^n_{i=1}$. Applying
McDiarmid's inequality to $R_n(\Cal{F}_H)$, we have for any $\tau>0$, with
probability at least
$1-\mathrm{e}^{-\tau}$,
%
\begin{eqnarray}\label{Eq:empiricalrad}
&&\bb{E}\sup_{f\in\Cal{F}_H}\Biggl\llvert \frac{1}{n}\sum
^n_{i=1
} \varepsilon_i(f\ast
K_h-f) (X_i)\Biggr\rrvert
\nonumber
\\[-8pt]
\\[-8pt]
\nonumber
&&\quad \le R_n(
\Cal{F}_H)+L_{m,r}h^{m\wedge
r}\sqrt{
\frac{2\tau}{n}}.
\end{eqnarray}
%
Bounding
$R_n(\Cal{F}_H)$ yields
\begin{eqnarray*}
&&R_n(\Cal{F}_H)\\
&&\quad=\bb{E}_\varepsilon\sup
_{f\in\Cal{F}_H} \Biggl\llvert \frac{1}{n}\sum
^n_{i=1} \varepsilon_i\int
\bigl(f(X_i+th)-f(X_i)\bigr)K(t) \,\mathrm{d}t\Biggr\rrvert
\\
&&\quad\stackrel{(\ref{Eq:taylor})} {=} \frac{h^{m\wedge r}}{n}\bb{E} _\varepsilon
\sup_ {
f\in\Cal{ F
}_H} \Biggl\llvert \sum_{|\alpha|=m\wedge r}
\int \Lambda_\alpha(t) K(t)\sum^n_{j=1}
\varepsilon_j\partial^\alpha f(X_j+hD_\theta
t) \,\mathrm{d}t\Biggr\rrvert
\\
&&\quad=\frac{h^{m\wedge r}}{n}\bb{E}_\varepsilon\sup_{f\in\Cal{F}_H}
\Biggl\llvert \sum_{ |\alpha|=m\wedge r } \int \Lambda_\alpha(t)
K(t) \Biggl\langle f,\sum^n_{j=1}
\varepsilon_j\partial^\alpha k(\cdot,X_j+hD_\theta
t) \Biggr\rangle_{\Cal{H}_k} \,\mathrm{d}t\Biggr\rrvert
\\
&&\quad\le \frac{h^{m\wedge
r}}{n}\bb{E}_\varepsilon\sup
_{k\in\Cal{K}}\sum_{|\alpha|=m\wedge r}\int
\Lambda_\alpha\bigl(|t|\bigr) \bigl|K(t)\bigr|\Biggl\llVert \sum
^n_{j=1}\varepsilon_j
\partial^\alpha k(\cdot, X_j+hD_\theta t)\Biggr
\rrVert _{\Cal{H}_k} \,\mathrm{d}t
\\
&&\quad= \frac{h^{m\wedge
r}}{n}\bb{E}_\varepsilon\sup_{k\in\Cal{K}}
\sum_{|\alpha|=m\wedge r}\int \Lambda_\alpha\bigl(|t|\bigr) \bigl|K(t)\bigr|
\sqrt{\sum^n_{i,j=1}
\varepsilon_i\varepsilon_j\partial^{\alpha
,\alpha}
k(X_i+hD_\theta t,X_j+hD_\theta t)}
\,\mathrm{d}t.
\end{eqnarray*}
Since $k$ is translation invariant, we have
%
\begin{eqnarray}
R_n(\Cal{F}_H)&=& \frac{h^{m\wedge
r}}{n}
\bb{E}_\varepsilon\sup_{k\in\Cal{K}}\sum
_{|\alpha|=m\wedge r} \Theta(\alpha)\sqrt{ \sum
^n_ { i, j=1 } \varepsilon_i
\varepsilon_j\partial^{\alpha,\alpha} k(X_i,X_j)}\label{Eq:tempo-proof}
\\
&\le& \frac{\sqrt{2}h^{m\wedge r}}{n}\sum_{|\alpha|=m\wedge r}\Theta(\alpha
)\sqrt{ \bb{E}_\varepsilon\sup_{k\in\Cal{K}}\Biggl\llvert
\sum^n_ {i<j} \varepsilon_i
\varepsilon_j\partial^{\alpha,\alpha} k(X_i,X_j)
\Biggr\rrvert }+\frac{h^{m\wedge r}L_{m,r}}{\sqrt{n}}
\nonumber
\\
&=& \frac{\sqrt{2}h^{m\wedge r}}{n}\sum_{|\alpha|=m\wedge r}\Theta(\alpha )
\sqrt{ \bb{E}_\varepsilon\sup_{k^\prime\in\Cal{K}_\alpha}\Biggl\llvert
\sum^n_ {i<j} \varepsilon_i
\varepsilon_jk^\prime(X_i,X_j)\Biggr
\rrvert }+\frac{h^{m\wedge
r}L_{m,r}}{\sqrt{n}} \label{Eq:bound-gauss-exm}
\\
&\stackrel{(\dagger)} {\le} & 2\sqrt{2}h^{m\wedge
r}
\sum_{|\alpha|=m\wedge
r}\Theta(\alpha)\sqrt{\Cal{T}(
\Cal{K}_\alpha,\rho_\alpha,\nu_\alpha)}
+
\frac{3L_{m,r}h^{m\wedge
r}}{\sqrt{n}},\label{Eq:expectation}
\end{eqnarray}
where we used Lemma~\ref{lem:chaining} in
$(\dagger)$ with $\theta=\frac{3}{4}$. Combining (\ref{eq:mcdiarmid}),
(\ref{Eq:empiricalrad}) and (\ref{Eq:expectation}), we have that for any
$\tau>0$, with probability at least $1-2\mathrm{e}^{-\tau}$, 
%
\begin{eqnarray}
\Cal{B}_0&\le&4\sqrt{2}h^{
m\wedge
r}
\sum_{|\alpha|=m\wedge
r}\Theta(\alpha)\sqrt{\Cal{T}(
\Cal{K}_\alpha,\rho_\alpha,\nu_\alpha )}+
\frac{
A_1h^ { m\wedge
r}}{\sqrt{n}}, 
\label{Eq:diff-talagrand}
\end{eqnarray}
where
$A_1:=(6+\sqrt{18\tau})L_{m,r}$.

(b) \textit{Bounding}
$\Vert K_h\ast\bb{P}-\bb{P}\Vert_{\Cal{F}_H}$:

Defining $\Cal{B}_1:=\Vert\bb{P}\ast
K_h-\bb{P}\Vert_{\Cal{F}_H}$, we have
%
\begin{eqnarray}
\label{Eq:approx} 
\Cal{B}_1&=&\sup
_{f\in\Cal{F}_H}\biggl\llvert \int f(x) \,\mathrm{d}(\bb{P}\ast K_h)
(x)-\int f(x) \,\mathrm{d}\bb{P}(x)\biggr\rrvert
\nonumber
\\
&=&\sup_{f\in\Cal{F}_H}\biggl\llvert \frac{1}{h^d}\int\int
f(x)K \biggl(\frac{x-y}{h} \biggr) \,\mathrm{d}\bb{P}(y) \,\mathrm{d}x-\int f(x) \,\mathrm{d}\bb{P}(x)\biggr
\rrvert
\nonumber
\\[-8pt]
\\[-8pt]
\nonumber
&=&\sup_{f\in\Cal{F}_H}\biggl\llvert \int \biggl(\int
\bigl(f(x+th)-f(x)\bigr) \,\mathrm{d}\bb{P}(x) \biggr)K(t) \,\mathrm{d}t\biggr\rrvert
\nonumber
\\
&=&\sup_{f\in\Cal{F}_H}\biggl\llvert \int
\bigl((\tilde{f}\ast p) (ht)-(\tilde{f}\ast p) (0) \bigr)K(t) \,\mathrm{d}t\biggr\rrvert,\nonumber
\end{eqnarray}
where $\tilde{f}(x)=f(-x)$. Since $p\in L^1(\bb{R}^d)$ and $\partial
^\alpha f$
is bounded for all $|\alpha|\le m$, by
Folland \cite{Folland-99}, Proposition~8.10, we have $\partial^\alpha
(f\ast
p)=(\partial^\alpha f)\ast p$ for $|\alpha|\le m$. In addition, since
$\partial^\alpha f$ is continuous for all $|\alpha|\le m$ and
$\partial^\beta p\in L^1(\bb{R}^d)$ for $|\beta|\le s$, by extension
of Gin{\'{e}} and Nickl \cite{Gine-08a}, Lemma~5(b), to $\bb{R}^d$, we have
$\partial^{\alpha+\beta}(f\ast
p)=\partial^\beta((\partial^\alpha
f)\ast p)=(\partial^\alpha f)\ast(\partial^\beta p)$ for $|\alpha|\le
m, |\beta|\le s$, which means for all $f\in\Cal{F}_H$, $f\ast p$ is
$m+s$-differentiable. Therefore, using the Taylor series expansion of
$(\tilde{f}\ast p)(ht)$ around zero (as in (\ref{Eq:taylor})) along
with the
regularity of $K$ in (\ref{Eq:approx}), we have
%
\begin{eqnarray}\label{Eq:temp}
\Cal{B}_1&=&\sup_{f\in\Cal{F}_H}
\biggl\llvert h^{r}\sum_{|\alpha|+|\beta
|=r} \int
\Lambda_{\alpha+\beta}(t) K(t) \bigl(\partial^\alpha\tilde{ f } \ast
\partial^\beta p\bigr) (hD_\theta t) \,\mathrm{d}t\biggr\rrvert
\nonumber
\\
&\le&h^{r}\sup_{f\in\Cal{F}_H}\sum
_{
{|\alpha|+|\beta|=r} }\int \Lambda_{\alpha+\beta}\bigl(|t|\bigr) \bigl|K(t)\bigr|\bigl|\bigl(
\partial^\alpha\tilde{ f } \ast \partial^\beta p\bigr)
(hD_\theta t)\bigr| \,\mathrm{d}t
\nonumber\\
&\le&h^{r}\sup_{f\in\Cal{F}_H}\sum
_{
{|\alpha|+|\beta|=r}}\int \Lambda_{\alpha+\beta}\bigl(|t|\bigr)
 \bigl|K(t)\bigr|\bigl(\bigl|
\partial^\alpha\tilde{f}\bigr|\ast\bigl |\partial^\beta p\bigr|\bigr)
(hD_\theta t) \,\mathrm{d}t.
\end{eqnarray}
Since
%
\begin{eqnarray}
\label{Eq:temp-1} \bigl(\bigl|\partial^\alpha\tilde{f}\bigr|\ast\bigl |
\partial^\beta p\bigr|\bigr) (hD_\theta t)&=&\int\bigl|
\partial^\alpha f (x-hD_\theta t)\bigr| \bigl|\partial^\beta
p(x)\bigr| \,\mathrm{d}x
\nonumber
\\
&\stackrel{(\ref{Eq:diff-inequality})} {\le}  & \int\sqrt{\partial^{\alpha,\alpha}
k(x-hD_\theta t,x-hD_\theta t)} \bigl|\partial^\beta p(x)\bigr|
\,\mathrm{d}x
\nonumber\\
&\stackrel{\mathrm{(i)}} {\le}  & \bigl\Vert \partial^\beta p
\bigr\Vert_{L^1(\bb{R}^d)}\sqrt{\sup_{k\in\Cal{K},x\in
\Cal{X}}\partial^{\alpha,\alpha}
k(x,x)},
\end{eqnarray}
using (\ref{Eq:temp-1}) in (\ref{Eq:temp}), we obtain
%
\begin{equation}
\Cal{B}_1\le A_2
h^{r},\label{Eq:approx-final} 
\end{equation}
where
$A_2:=\sum_{|\alpha|+|\beta|=r}\Theta(\alpha+\beta)\sqrt{\nu_{\alpha} }
\Vert
\partial^\beta
p\Vert_{L^1(\bb{R}^d)}$
and $(\alpha+\beta)_i=\alpha_i+\beta_i, \forall i=1,\ldots, d$. Using
(\ref{Eq:diff-talagrand}) and (\ref{Eq:approx-final}) in (\ref
{Eq:split}), we
obtain the result in (\ref{Eq:final-talagrand}). Since
%
\begin{equation}
\Vert K_h\ast\bb{P}_n-\bb{P}\Vert_{\Cal{F}_H}\le
\Vert K_h\ast \bb{P}_n-\bb{P}_n
\Vert_{\Cal{F}_H}+\Vert \bb{P}_n-\bb{P}\Vert_{\Cal{F}_H},\label{Eq:triangle-1}
\end{equation}
the result in (\ref{Eq:final-talagrand-1}) follows from
Theorem~\ref{Thm:consistency} and (\ref{Eq:final-talagrand}). Under the entropy
number conditions in~(\ref{Eq:entropy-alpha-kde}), it is easy to check (see
(\ref{Eq:rates-empirical})) that
\[
\sum_{|\alpha|=m\wedge r}\Theta(\alpha)\sqrt{\Cal{T}(
\Cal{K}_{\alpha
},\rho_{
\alpha}, \nu_ { \alpha})}=\mathrm{O} \bigl(
\sqrt{(\log n)^{\mathds{1}_{\{\omega_\star=1\}}}} n^{-{(\omega_\star\wedge1)}/{(2\omega_\star)}} \bigr)
\]
and, therefore, (\ref{Eq:small-oh-kde}) holds if $h$ satisfies
(\ref{Eq:condition-h-kde}). Using (\ref{Eq:small-oh-kde}) and
(\ref{Eq:concentration-empirical}) in (\ref{Eq:triangle-1}), the result in
(\ref{Eq:rates-kde}) follows under the assumption that $\Cal{K}$ satisfies
(\ref{Eq:entropy-full-kde}).
\end{pf}

\begin{rem}
Since every $k\in\Cal{K}$ is translation invariant, an alternate proof
can be
provided by using the representation for $\fr{D}_k$ (following
(\ref{Eq:charac})) in
Proposition~\ref{pro:embedding}: $\Vert
\bb{P}-\bb{Q}\Vert_{\Cal{F}_H}=\sup_\Upsilon\Vert\phi_\bb{P}-\phi_\bb{Q}
\Vert_{L^2 (\bb{R}^d,\Upsilon)}$, where the supremum is taken over all finite
nonnegative Borel measures on $\bb{R}^d$. In this case, conditions on the
derivatives of $k\in\Cal{K}$ translate into moment requirements for
$\Upsilon$.
However, the
current proof is more transparent as it clearly shows why the translation
invariance of $k$ is needed; see (\ref{Eq:uniformbound}) and
(\ref{Eq:tempo-proof}).
\end{rem}

In the following result (proved in Section~\ref{subsec:pro-examples}),
we present some families of $\Cal{K}$ that
ensure the claims of Theorems \ref{Thm:consistency} and \ref{Thm:estim-to-p}.

\begin{thmm}\label{thmm:examples}
Suppose the assumptions on $\bb{P}$ and $K$ in Theorem~\ref{Thm:estim-to-p}
hold and let $0<a<\infty$. Then for the following classes of kernels,
\begin{longlist}
\item[(a)]
\[
\Cal{K}= \bigl\{k(x,y)=\psi_\sigma(x-y), x,y\in\bb{R}^d
\dvt \sigma\in\Sigma \bigr\},
\]
where $\psi_\sigma(x)=\mathrm{e}^{-\sigma\Vert
x\Vert^2_2}$ and $\Sigma:=(0,a]$;
\item[(b)]
\[
\Cal{K}= \biggl\{k(x,y)=\int^\infty_0
\psi_\sigma(x-y) \,\mathrm{d}\Lambda(\sigma), x,y\in\bb{R}^d \dvt
\Lambda\in\Cal{M}_A \biggr\},
\]
where
\[
\Cal{M}_A:= \biggl\{\Lambda\in M^1_+\bigl((0,\infty)
\bigr)\dvt \int^\infty_0\sigma^r \,\mathrm{d}
\Lambda(\sigma)\le A<\infty \biggr\}
\]
for some fixed $A>0$;
\item[(c)]
\[
\Cal{K}= \biggl\{k(x,y)=\int_{(0,\infty)^d} \mathrm{e}^{-(x-y)^T\Delta
(x-y)} \,\mathrm{d}\Lambda
(\Delta), x,y\in\bb{R}^d \dvt \Lambda\in\Cal{Q}_A
\biggr\},
\]
where $\Delta:=\operatorname{diag}(\sigma_1,\ldots,\sigma_d)$,
\[
\Cal{Q}_A:= \Biggl\{\Lambda\in M^1_+\bigl((0,
\infty)^d\bigr) \Big| \Lambda=\bigotimes^d_{i=1}
\Lambda_i \dvt \Lambda _i\in \Cal{M}_{A_i},
i=1,\ldots,d \Biggr\}
\]
and
\[
\Cal{M}_{A_i}:= \biggl\{\Lambda_i\in M^1_+
\bigl((0,\infty)\bigr) \dvt \sup_{j\in\{1,\ldots,d\}}\int^\infty_0
\sigma^{\alpha_j} \,\mathrm{d}\Lambda_i(\sigma)\le A_i<\infty
\biggr\}
\]
for some
fixed constant
$A:=(A_1,\ldots,A_d)\in(0,\infty)^d$ with $\sum^d_{i=1}\alpha_i=r$
and $\alpha_i\ge0, \forall i=1,\ldots, d$;
\item[(d)]
\[
\Cal{K}= \biggl\{k(x,y)=A\frac{\llVert
x-y\rrVert ^{\beta-{d}/{2}}_2}{c^{{d}/{2}-\beta}}\frak {K}_{{d}/{2
} -\beta} \bigl(c\Vert x-y
\Vert_2 \bigr), x,y\in\bb{R}^d, \beta>m+\frac{d}{2}
\dvt c\in\Sigma \biggr\},
\]
where $A:=\frac{2^{{d}/{2}+1-\beta}}{\Gamma(\beta-{d}/{2})}$
and $m\in\bb{N}$,
\end{longlist}
$\Vert
\bb{P}_n-\bb{P}\Vert_{\Cal{F}_H}=\mathrm{O}_{\mathrm{a.s.}}(n^{-1/2})$, $\Vert\bb
{P}_n\ast
K_h-\bb{P}_n\Vert_{\Cal{F}_H}=\mathrm{o}_{\mathrm{a.s.}}(n^{-1/2})$ and $\Vert
\bb{P}_n\ast
K_h-\bb{P}\Vert_{\Cal{F}_H}=\mathrm{O}_{\mathrm{a.s.}}(n^{-1/2})$ for any $h$ satisfying
$\sqrt{n}h^{r}\rightarrow0$ as $h\rightarrow0$ and $n\rightarrow\infty$,
which is particularly satisfied by $h=h^\ast$ and
$h=h^{\ast\ast}$ if $r>s+\frac{d}{2}$, where $h^\ast$ and $h^{\ast\ast
}$ are defined in
Remark~\ref{Rem:main}\textup{(i)}.
%
\end{thmm}

\begin{rem}\label{rem:gauss-exm} (i) The Gaussian RKHS in
(\ref{Eq:gauss-rkhs}) has the
property that $\Cal{H}_{\sigma}\subset\Cal{H}_\tau$ if
$0<\sigma<\tau<\infty$, where $\Cal{H}_\sigma$ is the Gaussian RKHS
induced by
$\psi_\sigma$. This follows since for any $f\in\Cal{H}_\sigma$,
%
\begin{eqnarray}
\label{Eq:subset}\Vert f\Vert^2_{\Cal{H}_\tau}&:=&(4\pi
\tau)^{{d}/{2}}\int \bigl|\widehat{f}(\omega)\bigr|^2\mathrm{e}^{{\Vert
\omega\Vert^2_2}/{(4\tau)}} \,\mathrm{d}
\omega
\nonumber
\\[-8pt]
\\[-8pt]
\nonumber
& =&(4\pi\tau)^{{d}/{2}}\int \bigl|\widehat{f}(\omega)\bigr|^2\mathrm{e}^{{\Vert
\omega\Vert^2_2}/{(4\sigma)}}\mathrm{e}^{\Vert\omega\Vert^2_2 ({1}/{(4\tau)}-
{1} /{ (4\sigma)}  )}
\,\mathrm{d}\omega\le \biggl(\frac{\tau}{\sigma} \biggr)^{{d}/{2}}\Vert f
\Vert^2_{\Cal{H}_\sigma},
\end{eqnarray}
which implies\vspace*{1pt} for any $k\in\Cal{K}$, $\Cal{H}_k\subset\Cal{H}_a$,
where the
definition of $\Vert\cdot\Vert^2_{\Cal{H}_\tau}$ for any $\tau>0$ in
the first
line of (\ref{Eq:subset}) is obtained from Wendland \cite{Wendland-05}, Theorem~10.12.
From
(\ref{Eq:subset}), it follows that
\[
\biggl\{\Vert f\Vert_{\Cal{H}_\sigma}\le \biggl(\frac{\sigma}{a}
\biggr)^{d/4} \dvt f\in\Cal{H}_\sigma, \sigma\in (0,a] \biggr\}
\subset\bigl\{f\in\Cal{H}_a\dvt \Vert f\Vert_{\Cal{H}_a}\le1\bigr\}\subset
\Cal{F}_H
\]
and
%
\begin{equation}
\Cal{F} _H\subset \bigcup_{\sigma\in(0,a]} \biggl
\{f\in\Cal{H}_a\dvt \Vert f\Vert_{\Cal
{H}_a}\le \biggl(
\frac{a}{\sigma} \biggr)^{d/4} \biggr\}=\Cal{H}_a,\label{Eq:fh-subset}
\end{equation}
where
$\Cal{F}_H:= \{\Vert
f\Vert_{\Cal{H}_\sigma}\le1 \dvt f\in\Cal{H}_\sigma, \sigma\in(0,a] \}$.

(ii) The
kernel classes in (b) and (c) above are generalizations
of
the
Gaussian family in (a). This can be seen by choosing
$\Cal{M}_A=\{\delta_{\sigma}\dvt \sigma\in\Sigma\}$ in (b) where $A=a^r$ and
$\Cal{M}_{A_i}=\{\delta_\sigma: \sigma\in\Sigma\}$ in (c) with
$A_i=a^r$ for all $i=1,\ldots, d$.
%
By choosing
\[
\Cal{M}_A= \biggl\{\mathrm{d}\Lambda(\sigma)=\frac{c^{2\beta}}{\Gamma(\beta)}
\sigma^{
\beta-1 } \mathrm{e}^ { -\sigma c ^2} \,\mathrm{d}\sigma, \beta>0 \dvt c\in [a,\infty), a>0
\biggr\}
\]
in (b), the inverse multiquadrics kernel
family,
%
\begin{equation}
\Cal{K}= \biggl\{k(x,y)= \biggl(1+\biggl\llVert \frac{x-y}{c} \biggr\rrVert
^2_2 \biggr)^ {
-\beta}, x,y\in\bb{R}^d,
\beta>0 \dvt c\in[a,\infty), a>0 \biggr\}\label{Eq:multiquadrics}
\end{equation}
mentioned in Example~\ref{Exm:weak}
is obtained, where $A=a^{-2r}\frac{\Gamma(r+\beta)}{\Gamma(\beta)}$ with
$\Gamma$ being the
Gamma function.
%
Similarly, choosing
\[
\Cal{M}_{A_i}= \bigl\{\mathrm{d}\Lambda_i(\sigma)=c^2_i\mathrm{e}^{-\sigma
c^2_i}
\,\mathrm{d}\sigma: c_i\in[a_i,\infty), a_i>0 \bigr\}
\]
yields the family
\[
\Cal{K}= \Biggl\{\prod^d_{i=1} \biggl(1+
\frac{|x_i-y_i|^2}{c^2_i} \biggr)^{-1}, x,y\in\bb{R}^d \dvt
c_i\in [a_i,\infty), a_i>0, \forall i=1,
\ldots,d \Biggr\}
\]
in
Example~\ref{Exm:weak} where $A_i=\sup_{j\in
\{1,\ldots,d\}}\alpha_j! a^{-2\alpha_j}_i$. It is easy to verify that these
classes of
kernels metrize the weak topology on $M^1_+(\bb{R}^d)$.

(iii)
Suppose there exists $B>0$ and
$\delta>0$
such that $\inf_{\Lambda\in\Cal{M}_A}\int^\infty_0
\mathrm{e}^{-\delta\sigma^2} \,\mathrm{d}\Lambda(\sigma)\ge B$ (similarly, there exists
$B_i>0$ and $\delta_i>0$
such that $\inf_{\Lambda_i\in\Cal{M}_{A_i}}\int^\infty_0
\mathrm{e}^{-\delta_i\sigma^2} \,\mathrm{d}\Lambda_i(\sigma)\ge B_i, i=1,\ldots,d$), where
$\Cal{M}_A$ and $(\Cal{M}_{A_i})^d_{i=1}$ are defined in (b) and
(c) of
Theorem~\ref{thmm:examples}. Then it is easy to show (see
Section~\ref{sec:proof-weak} for a proof) that $\Vert\cdot\Vert_{\Cal{F}_H}$
metrizes the weak topology on $M^1_+(\bb{R}^d)$, which when combined
with the
result in Theorem~\ref{thmm:examples} yields that for $\Cal{K}$ in (a)--(c),
\[
\bb{P}_n\leadsto\bb{P} \quad\mbox{and}\quad \bb{P}_n\ast
K_h\leadsto\bb{P} \qquad\mbox{a.s.}
\]
at the rate of $n^{-1/2}$.

(iv) It is clear from (\ref{Eq:sobolev}) that any
$k$ in the Mat\'{e}rn family in Example~\ref{Exm:charac}~-- the family in
Theorem~\ref{thmm:examples}(d) is a special case of this~-- induces
an RKHS
which is a Sobolev space,
\[
H_c:=H^{\beta,c}_2= \biggl\{f\in L^2
\bigl(\bb{R}^d\bigr)\cap C\bigl(\bb{R}^d\bigr) \dvt \int
\bigl(c^2+\Vert \omega\Vert^2_2
\bigr)^\beta\bigl|\widehat{f}(\omega)\bigr|^2 \,\mathrm{d}\omega<\infty \biggr\},
\]
where $\beta>d/2$ and $c>0$. Similar to the Gaussian kernel family, it
can be
shown that $H_c\subset H_\alpha$ for $0<c<\alpha<\infty$ since for any
$f\in H_c$,
\[
\Vert f\Vert^2_{H_\alpha}:=\frac{2^{1-\beta}}{A\alpha^{2\beta-d}\Gamma(\beta
)}\int \bigl(
\alpha^2+\Vert \omega\Vert^2_2
\bigr)^\beta\bigl|\widehat{f}(\omega)\bigr|^2 \,\mathrm{d}\omega\le \biggl(
\frac{\alpha}{c} \biggr)^{d}\Vert f\Vert^2_{H_c},
\]
where the definition of $\Vert\cdot\Vert_{H_\alpha}$ follows from
Wendland \cite{Wendland-05}, Theorems
6.13 and 10.12. Therefore, we have
\[
\biggl\{\Vert f\Vert_{H_c}\le \biggl(\frac{c}{a}
\biggr)^{d/2} \dvt f\in H_c, c\in (0,a] \biggr\}\subset\bigl\{f
\in H_a\dvt \Vert f\Vert_{H_a}\le1\bigr\}\subset
\Cal{F}_H
\]
and
%
\begin{equation}
\Cal{F} _H\subset \bigcup_{c\in(0,a]} \biggl
\{f\in H_a\dvt \Vert f\Vert_{H_a}\le \biggl(
\frac{a}{c} \biggr)^{d/2} \biggr\}=H_a,\label{Eq:fh-subset-1}
\end{equation}
where
$\Cal{F}_H:= \{\Vert
f\Vert_{H_c}\le1 \dvt f\in H_c, c\in(0,a] \}$. Unlike in Example~\ref{Exm:charac}, $\Cal{K}$
in Theorem~\ref{thmm:examples}(d) requires $\beta>m+\frac{d}{2}$.
This is to
ensure that every $k\in\Cal{K}$ is $m$-times continuously
differentiable as
required in Theorem~\ref{Thm:estim-to-p}, which is guaranteed by the
Sobolev embedding theorem (Folland \cite{Folland-99}, Theorem~9.17) if
$\beta>m+\frac{d}{2}$. Also, since $\Cal{K}$ metrizes the weak topology (holds
for the Gaussian family as well) on
$M^1_+(\bb{R}^d)$ (see Example~\ref{Exm:weak}), we obtain that
\[
\bb{P}_n\leadsto\bb{P} \quad\mbox{and}\quad \bb{P}_n\ast
K_h\leadsto\bb{P} \qquad\mbox{a.s.}
\]
at the rate of $n^{-1/2}$.
\end{rem}

%
\subsection{Uniform central limit theorem}\label{subsec:uclt-main}
So far, we have presented
exponential concentration inequalities for $\Vert
\bb{P}_n-\bb{P}\Vert_{\Cal{F}_H}$ and $\Vert\bb{P}_n\ast K_h
-\bb{P}\Vert_{\Cal{F}_H}$ in Theorems \ref{Thm:consistency} and
\ref{Thm:estim-to-p}, respectively, and showed that $\sqrt{n} \Vert
\bb{P}_n-\bb{P}\Vert_{\Cal{F}_H}=\mathrm{O}_{\mathrm{a.s.}}{(1)}$ and $\sqrt{n} \Vert
\bb{P}_n\ast K_h-\bb{P}\Vert_{\Cal{F}_H}=\mathrm{O}_{\mathrm{a.s.}}{(1)}$ for families of
$\Cal{K}$ in Theorem~\ref{thmm:examples}. It is therefore easy to note that
if $\Cal{F}_H$ is $\bb{P}$-Donsker, then
$\sqrt{n} (\bb{P}_n-\bb{P} )\leadsto_{\ell^\infty(\Cal{F}_H)}
\bb{G}_\bb{P}$
and so
$\sqrt{n} (\bb{P}_n\ast K_h-\bb{P} )\leadsto_{\ell^\infty(\Cal{F}_H)}
\bb{G}_\bb{P}$ (as $\sqrt{n}\Vert\bb{P}_n\ast
K_h-\bb{P}_n\Vert_{\Cal{F}_H}=\mathrm{o}_{\mathrm{a.s.}}(1)$) for any $h$ satisfying
$\sqrt{n}h^{r}\rightarrow0$ as $h\rightarrow0$ and $n\rightarrow
\infty$.
Here, $\bb{G}_\bb{P}$ denotes the $\bb{P}$-Brownian bridge indexed by
$\Cal{F}_H$. However, unlike Theorems~\ref{Thm:consistency} and~\ref{Thm:estim-to-p} which hold for a general $\Cal{F}_H$, it is not
easy to
verify the $\bb{P}$-Donsker property of $\Cal{F}_H$ for any general~$\Cal{K}$.
In particular, it is not
easy to check whether there exists a pseudometric on $\Cal{F}_H$ such that
$\Cal{F}_H$ is totally bounded (w.r.t. that pseudometric) and $\Cal{F}_H$
satisfies the asymptotic equicontinuity
condition (see Dudley \cite{Dudley-99}, Theorem~3.7.2) or $\Cal{F}_H$
satisfies the uniform entropy condition
(see van der Vaart and Wellner \cite{Vaart-96}, Theorem~2.5.2) as
obtaining estimates on the
$L^2(\bb{P}_n)$
covering number of $\Cal{F}_H$ does not appear to be straightforward.
On the
other hand, in the following result (which is proved in Section~\ref{subsec:thmm-singleton}), we show that $\Cal{F}_H$ is
$\bb{P}$-Donsker for the classes considered in Theorem~\ref{thmm:examples}
(with a slight restriction to the parameter space) and,
therefore, UCLT in $\ell^\infty(\Cal{F}_H)$ holds. A similar result
holds for any
general $\Cal{K}$
(other than the ones in Theorem~\ref{thmm:examples}), if $\Cal{K}$ is
singleton consisting of a bounded continuous kernel.

\begin{thmm}\label{pro:singleton}
Suppose the assumptions on $\bb{P}$ and $K$ in Theorem~\ref{Thm:estim-to-p}
hold and let $0<a<b<\infty$. Define $\Sigma:=[a,b]$. Then for the
following classes of kernels,
\begin{longlist}[(a)]
\item[(a)]
\[
\Cal{K}= \bigl\{k(x,y)=\mathrm{e}^{-\sigma\Vert
x-y\Vert^2_2}, x,y\in\bb{R}^d \dvt \sigma
\in\Sigma \bigr\};
\]
\item[(b)]
\[
\Cal{K}= \biggl\{k(x,y)= \biggl(1+\biggl\llVert \frac{x-y}{c} \biggr\rrVert
^2_2 \biggr)^{-\beta}, x,y\in\bb{R}^d,
\beta>0 \dvt c\in \Sigma \biggr\};
\]
\item[(c)]
\[
\Cal{K}= \biggl\{k(x,y)=A\frac{\llVert
x-y\rrVert ^{\beta-{d}/{2}}_2}{c^{{d}/{2}-\beta}}\frak {K}_{{d}/{2
} -\beta} \bigl(c\Vert x-y
\Vert_2 \bigr), x,y\in\bb{R}^d, \beta>m+\frac{d}{2}
\dvt c\in\Sigma \biggr\},
\]
where $A:=\frac{2^{{d}/{2}+1-\beta}}{\Gamma(\beta-{d}/{2})}$
and $m\in\bb{N}$;
\item[(d)] $\Cal{K}=\{k\}$ where $k$ satisfies the conditions in
Theorem~\ref{Thm:estim-to-p},
\end{longlist}
$\sqrt{n} (\bb{P}_n-\bb{P})\leadsto_{\ell^\infty(\Cal{F}_H)}
\bb{G}_\bb{P}$ and for any $h$ satisfying
$\sqrt{n}h^{r}\rightarrow0$ as $h\rightarrow0$ and $n\rightarrow
\infty$,
we have
\[
\sqrt{n} (\bb{P}_n\ast K_h-\bb{P})
\leadsto_{\ell^\infty(\Cal{F}_H)} \bb{G}_\bb{P},
\]
which particularly holds for $h^\ast$ and $h^{\ast\ast}$ if $r>s+\frac
{d}{2}$, where $h^\ast$
and $h^{\ast\ast}$ are defined in Remark~\ref{Rem:main}\textup{(i)}.
\end{thmm}

Theorem~7 in Gin{\'{e}} and Nickl \cite{Gine-08a} shows the above
result for Mat\'{e}rn kernels
(i.e., $\Cal{K}$ in (c) with $c=1$ and $d=1$), but here we generalize
it to a
wide
class of kernels. Theorem~\ref{pro:singleton}(d) shows that all the kernels
(with the parameter fixed a priori, for example, $\sigma$ in the
Gaussian kernel) we
have encountered so far -- such as in
Examples \ref{Exm:charac} and \ref{Exm:weak}~-- satisfy the conditions in
Theorem~\ref{pro:singleton} and, therefore, yield a UCLT. Note that the kernel
classes, $\Cal{K}$ in Theorem~\ref{pro:singleton} are slightly constrained
compared to those in Theorem~\ref{thmm:examples}
and Remark~\ref{rem:gauss-exm}(ii). This restriction in the kernel
class is
required as the proof of $\Cal{F}_H$ being $\bb{P}$-Donsker (which in
combination with Slutsky's lemma and Theorem~\ref{thmm:examples} yields the
desired result in Theorem~\ref{pro:singleton}) critically hinges on the
inclusion result shown in (\ref{Eq:fh-subset}) and (\ref
{Eq:fh-subset-1}); also
see (\ref{Eq:fh-subset2}) for such an inclusion result for $\Cal{K}$ in
Theorem~\ref{pro:singleton}(b). However, this technique is not
feasible for the
kernel classes, (b) and (c) in Theorem~\ref{thmm:examples} to be
shown as
$\bb{P}$-Donsker, while we reiterate that for any general $\Cal{K}$, it is
usually difficult to check for the Donsker property of $\Cal{F}_H$.

Combining Theorems \ref{Thm:weak}, \ref{thmm:examples} and
\ref{pro:singleton}, we obtain that the kernel density estimator with
bandwidth $h^*$ is an optimal estimator of $p$ in both strong and weak
topologies unlike $\bb{P}_n$, which estimates $\bb{P}$ optimally only
in the
weak topology. While this optimality result holds in $d=1$ when using
$\Vert
\cdot\Vert_{\Cal{F}_{\mathrm{BL}}}$ as the loss to measure the optimality of
$\bb{P}_n\ast K_h$ in the weak sense, the result does not hold for
$d\ge2$ as
discussed before. In addition, for $d=1$, the UCLT for
$\sqrt{n}(\bb{P}_n-\bb{P})$ and $\sqrt{n}(\bb{P}_n\ast K_h-\bb{P})$ in
$\ell^\infty(\Cal{F}_{\mathrm{BL}})$ holds only under a certain moment condition on
$\bb{P}$, that is, $\int|x|^{1+\gamma} \,\mathrm{d}\bb{P}(x)<\infty$ for some
$\gamma>0$ (see Gin{\'{e}} and Zinn \cite{Gine-86}, Theorem~2) while no
such condition on $\bb{P}$
is required
to obtain the UCLT for the above processes in
$\ell^\infty(\Cal{F}_H)$ though both $\Vert\cdot\Vert_{\Cal{F}_{\mathrm{BL}}}$
and $\Vert\cdot\Vert_{\Cal{F}_H}$ metrize the weak topology on
$M^1_+(\bb{R}^d)$.
\section{Discussion}\label{Sec:adaptive}
So far we have shown that the kernel density estimator on $\bb{R}^d$
with an
appropriate choice of bandwidth is an optimal estimator of $\bb{P}$ in
$\Vert\cdot\Vert_{\Cal{F}_H}$, that is, in
weak topology, similar to $\bb{P}_n$. In Section~\ref{subsec:alternate}, we
present a similar result for an alternate metric
$\Vert\cdot\Vert_{\Cal{K}_\Cal{X}}$ (defined below) that is topologically
equivalent to $\Vert\cdot\Vert_{\Cal{F}_H}$, that is, metrizes the weak
topology on
$M^1_+(\Cal{X})$ where $\Cal{X}$ is a
topological space and $\Cal{K}_\Cal{X}\subset\Cal{F}_H$, showing that
$\Cal{F}_H$ is not the only class that guarantees the optimality of kernel
density estimator in weak and strong topologies. While a
result similar to these is shown in $\Vert\cdot\Vert_{\Cal{F}_{\mathrm{BL}}}$
for $d=1$
in Gin{\'{e}} and Nickl \cite{Gine-08b}, there is a significant
computational advantage associated
with $\Cal{F}_H$ over $\Cal{K}_\Cal{X}$ and $\Cal{F}_{\mathrm{BL}}$ in the context
of constructing adaptive estimators that are optimal in both strong and weak
topologies, which we discuss in Section~\ref{subsec:adaptive}.
\subsection{Optimality in
\texorpdfstring{$\Vert\cdot\Vert_{\Cal{K}_\Cal{X}}$}{$||cdot||_{K_X}$}}\label{subsec:alternate}
In this section, we consider an alternate metric, $\Vert
\cdot\Vert_{\Cal{K}_\Cal{X}}$, which we show in Proposition~\ref
{pro:bound} (proved in Section~\ref{subsec:probound}) to
be topologically equivalent to
$\Vert\cdot\Vert_{\Cal{F}_H}$ if $\Cal{K}$
is uniformly bounded, where
\[
\Cal{K}_\Cal{X}:= \bigl\{k(\cdot,x) \dvt k\in\Cal{K}, x\in\Cal{X}
\bigr\}
\]
and $\Cal{X}$ is a topological space. Note that
$\Cal{K}_\Cal{X}\subset\Cal{F}_H$ if $k(x,x)\le1, \forall x\in\Cal{X},
k\in\Cal{K}$, which means a reduced subset of $\Cal{F}_H$ is sufficient to
metrize the weak topology on $M^1_+(\Cal{X})$.

\begin{pro}\label{pro:bound}
Suppose $\nu:=\sup_{k\in\Cal{K},x\in\Cal{X}}k(x,x)<\infty$. Then for any
$\bb{P},\bb{Q}\in M^1_+(\Cal{X})$
%
\begin{equation}
\nu^{-1/2}\Vert\bb{P}-\bb{Q}\Vert_{\Cal{K}_\Cal{X}}\le\Vert \bb{P}-\bb{Q}
\Vert_{\Cal{F}_H}\le\sqrt{2 \Vert\bb{P}-\bb{Q} \Vert_{\Cal{K}_\Cal{X}}},\label{Eq:bound-distance}
\end{equation}
where
\[
\Vert \bb{P}-\bb{Q}\Vert_{\Cal{K}_\Cal{X}}=\sup_{k\in\Cal{K}} \biggl
\llVert \int k(\cdot,x) \,\mathrm{d}\bb{P}(x)-\int k(\cdot,x) \,\mathrm{d}\bb{Q}(x)\biggr\rrVert
_\infty.
\]
In addition if $\Cal{K}$ satisfies the assumptions in Theorem~\ref{Thm:weak},
then for any sequence $(\bb{P}_{(n)})_{n\in\bb{N}}\subset M^1_+(\Cal
{X})$ and
$\bb{P}\in M^1_+(\Cal{X})$,
%
\begin{equation}
\Vert \bb{P}_{(n)}-\bb{P}\Vert_{\Cal{K}_\Cal{X}}\rightarrow0
\quad\Longleftrightarrow\quad \Vert\bb{P}_{(n)}-\bb{P}\Vert_{\Cal{F}_H}
\rightarrow0 \quad\Longleftrightarrow\quad \bb{P}_{(n)}\leadsto\bb{P} \qquad\mbox{as } n
\rightarrow\infty.\label{Eq:chain-pro}
\end{equation}
\end{pro}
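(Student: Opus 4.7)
\textbf{Plan of proof for Proposition~\ref{pro:bound}.}
My plan is to first establish the two-sided inequality in (\ref{Eq:bound-distance}) and then derive the topological equivalence (\ref{Eq:chain-pro}) as a direct consequence of these bounds together with Theorem~\ref{Thm:weak}.

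For the lower bound in (\ref{Eq:bound-distance}), I will observe that by the reproducing property every $k(\cdot,x)$ satisfies $\Vert k(\cdot,x)\Vert_{\Cal{H}_k}=\sqrt{k(x,x)}\le\sqrt{\nu}$, so that $\nu^{-1/2}k(\cdot,x)\in\Cal{F}_H$ for every $k\in\Cal{K}$ and $x\in\Cal{X}$. Consequently $\nu^{-1/2}\Cal{K}_\Cal{X}\subset\Cal{F}_H$ and taking suprema of $|\int f\,d(\bb{P}-\bb{Q})|$ over the smaller class yields $\nu^{-1/2}\Vert\bb{P}-\bb{Q}\Vert_{\Cal{K}_\Cal{X}}\le\Vert\bb{P}-\bb{Q}\Vert_{\Cal{F}_H}$, provided one first rewrites $\Vert\bb{P}-\bb{Q}\Vert_{\Cal{K}_\Cal{X}}$ in its dual form (by a Bochner-integral argument identical to that used in Proposition~\ref{pro:embedding}, one checks that $\sup_x|\int k(x,y)\,d(\bb{P}-\bb{Q})(y)|=\sup_x|\langle k(\cdot,x),\int k(\cdot,y)\,d(\bb{P}-\bb{Q})(y)\rangle_{\Cal{H}_k}|$ equals $\sup_{\Vert f\Vert_{\Cal{H}_k}\le\sqrt{\nu}}|\int f\,d(\bb{P}-\bb{Q})|$ up to the $\sqrt{\nu}$ factor).

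For the upper bound, I will use the representation (\ref{Eq:MMD-1}) from Proposition~\ref{pro:embedding}. For fixed $k\in\Cal{K}$, writing the double integral as an iterated integral and bounding the inner integral pointwise,
\begin{equation}
\fr{D}_k^2(\bb{P},\bb{Q})=\int\Bigl(\int k(x,y)\,d(\bb{P}-\bb{Q})(x)\Bigr)\,d(\bb{P}-\bb{Q})(y)\le \Vert\bb{P}-\bb{Q}\Vert_{\Cal{K}_\Cal{X}}\,|\bb{P}-\bb{Q}|(\Cal{X}),\nonumber
\end{equation}
where $|\bb{P}-\bb{Q}|(\Cal{X})\le 2$ since $\bb{P},\bb{Q}$ are probability measures. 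Taking square roots and then the supremum over $k\in\Cal{K}$ via (\ref{Eq:prop}) gives $\Vert\bb{P}-\bb{Q}\Vert_{\Cal{F}_H}\le\sqrt{2\Vert\bb{P}-\bb{Q}\Vert_{\Cal{K}_\Cal{X}}}$, which is the desired upper bound.

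Finally, (\ref{Eq:chain-pro}) follows immediately: the two inequalities in (\ref{Eq:bound-distance}) are sandwich estimates showing that convergence in $\Vert\cdot\Vert_{\Cal{K}_\Cal{X}}$ and convergence in $\Vert\cdot\Vert_{\Cal{F}_H}$ are equivalent (with one implication coming from each side of the sandwich), and under the hypotheses of Theorem~\ref{Thm:weak} the latter is in turn equivalent to weak convergence. I do not anticipate serious technical obstacles here; the only mildly delicate point is to justify rigorously that $\Vert\bb{P}-\bb{Q}\Vert_{\Cal{K}_\Cal{X}}$ as defined (via the supremum of an $L^\infty$-norm) coincides with the supremum of $|\int f\,d(\bb{P}-\bb{Q})|$ taken over $\Cal{K}_\Cal{X}$, which in turn relies on the same Bochner-integrability argument invoked in Proposition~\ref{pro:embedding}; this is where measurability of $k(\cdot,x)$ and the finiteness of $\int\sqrt{k(x,x)}\,d\bb{P}$ implicit in uniform boundedness are used.
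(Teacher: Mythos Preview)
Your proposal is correct and follows essentially the same approach as the paper's proof: the upper bound via (\ref{Eq:MMD-1}) and the total-variation bound $|\bb{P}-\bb{Q}|(\Cal{X})\le 2$ is identical, and your lower bound via the inclusion $\nu^{-1/2}\Cal{K}_\Cal{X}\subset\Cal{F}_H$ is just a repackaging of the paper's Cauchy--Schwarz step $|\langle k(\cdot,x),\int k(\cdot,y)\,d(\bb{P}-\bb{Q})(y)\rangle_{\Cal{H}_k}|\le\sqrt{\nu}\,\fr{D}_k(\bb{P},\bb{Q})$. One small caution: in your parenthetical remark, the quantity $\sup_x|\langle k(\cdot,x),\int k(\cdot,y)\,d(\bb{P}-\bb{Q})(y)\rangle_{\Cal{H}_k}|$ is only \emph{bounded above} by $\sup_{\Vert f\Vert_{\Cal{H}_k}\le\sqrt{\nu}}|\int f\,d(\bb{P}-\bb{Q})|$, not equal to it---but this inequality is exactly what you need, and your main inclusion argument already delivers it cleanly.
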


From (\ref{Eq:bound-distance}), it simply follows that
\[
\sqrt{n} \Vert \bb{P}_n-\bb{P}\Vert_{\Cal{K}_\Cal{X}}=\mathrm{O}_{\mathrm{a.s.}}(1),\qquad
\sqrt{n} \Vert \bb{P}_n\ast K_h-\bb{P}_n
\Vert_{\Cal{K}_\Cal{X}}=\mathrm{o}_{\mathrm{a.s.}}(1)
\]
and
\[
\sqrt{n} \Vert \bb{P}_n\ast K_h-\bb{P}
\Vert_{\Cal{K}_\Cal{X}}=\mathrm{O}_{\mathrm{a.s.}}(1)
\]
for any $\Cal{K}$
in Theorem~\ref{Thm:estim-to-p} with $\omega_\ast< 1$
and $\omega_\Cal{K}< 1$ (and, therefore, for any $\Cal{K}$
in Theorem~\ref{thmm:examples}) with $h$ satisfying
$\sqrt{n}h^{r}\rightarrow0$ as $h\rightarrow0$ and $n\rightarrow
\infty$. Therefore, if $\Cal{K}_\Cal{X}$ is $\bb{P}$-Donsker, then for
any $h$
satisfying these conditions, we obtain
\[
\sqrt{n} (\bb{P}_n\ast K_h-\bb{P})
\leadsto_{\ell^\infty(\Cal{K}_\Cal{X})} \bb{G}_\bb{P}.
\]
The following result (proved in Section~\ref{subsec:uclt}) shows that
$\Cal{K}_\Cal{X}$ is
a
universal Donsker class (i.e., $\bb{P}$-Donsker for all probability measures
$\bb{P}$ on $\bb{R}^d$) for $\Cal{K}$ considered in
Theorem~\ref{thmm:examples} and therefore we obtain UCLT for
$\sqrt{n} (\bb{P}_n-\bb{P})$ and
$\sqrt{n} (\bb{P}_n\ast K_h-\bb{P})$ in $\ell^\infty(\Cal{K}_\Cal{X})$.

\begin{thmm}\label{thmm:uclt}
Suppose the assumptions on $\bb{P}$ and $K$ in Theorem~\ref{Thm:estim-to-p}
hold. Define $\Cal{K}_\Cal{X}:=\{k(\cdot,x)\dvt k\in\Cal{K}, x\in\Cal{X}\}
$. Then
for $\Cal{K}$ in Theorem~\ref{thmm:examples}, $\Cal{K}_\Cal{X}$ is a
universal Donsker class and
\[
\sqrt{n} (\bb{P}_n-\bb{P})\leadsto_{\ell^\infty(\Cal{K}_\Cal{X})}
\bb{G}_\bb{P} \quad\mbox{and}\quad \sqrt{n} (\bb{P}_n\ast
K_h-\bb{P})\leadsto_{\ell^\infty(\Cal{K}_\Cal{X})} \bb{G}_\bb{P},
\]
for $h$ satisfying
$\sqrt{n}h^{r}\rightarrow0$ as $h\rightarrow0$ and $n\rightarrow
\infty$,
which particularly holds for $h^\ast$ and $h^{\ast\ast}$ if $r>s+\frac
{d}{2}$, where $h^\ast$
and $h^{\ast\ast}$ are defined in Remark~\ref{Rem:main}\textup{(i)}.
\end{thmm}

Combining Theorem~\ref{Thm:weak} and Proposition~\ref{pro:bound}, along
with Theorems \ref{thmm:examples} and
\ref{thmm:uclt}, it is clear that the kernel density estimator with
bandwidth $h^*$ is an optimal estimator of $p$ in both strong and weak
topologies (induced by $\Vert\cdot\Vert_{\Cal{K}_\Cal{X}}$). While this result
matches with the one obtained for
$\Vert\cdot\Vert_{\Cal{F}_H}$, by comparing Theorems \ref
{pro:singleton} and
\ref{thmm:uclt}, we note that the convergence in
$\ell^\infty(\Cal{K}_\Cal{X})$ does not require the restriction in the parameter
space as imposed in kernel classes for
convergence in $\ell^\infty(\Cal{F}_H)$ in Theorem~\ref{pro:singleton}.
However, we show in the following section that $\Vert\cdot\Vert_{\Cal
{F}_H}$ is
computationally easy to deal with than $\Vert\cdot\Vert_{\Cal{K}_\Cal{X}}$.

\subsection{Adaptive estimation and computation}
\label{subsec:adaptive}
Let us return to the fact that there exists estimators that are
$\mathrm{o}_{\bb{P}}(n^{-1/2})$ from $\bb{P}_n$ in $\Vert\cdot\Vert_\Cal{F}$ (for
suitable choice of $\Cal{F}$) and behave
statistically similar to $\bb{P}_n$. While we
showed this fact through Theorems \ref{Thm:estim-to-p} and \ref{thmm:examples}
for the kernel density estimator with $\Cal{F}=\Cal{F}_H$ (and
Proposition~\ref{pro:bound} for
$\Cal{F}=\Cal{K}_\Cal{X}$), Gin{\'{e}} and Nickl \cite
{Gine-08a,Gine-09a,Gine-08b} showed the same
result
with $\Cal{F}$ being functions of bounded variation,
$\{\mathds{1}_{(-\infty,t]}\dvt t\in\bb{R}\}$, H\"{o}lder, Lipschitz and Sobolev
classes on $\bb{R}$. Similar result is shown for wavelet density estimators
and spline projection estimators in
$\Cal{F}=\{\mathds{1}_{(-\infty,t]}\dvt t\in\bb{R}\}$ (Gin{\'{e}} and Nickl
\cite{Gine-09b,Gine-10}) and
maximum likelihood estimators in $\Cal{F}_{\mathrm{BL}}$ (Nickl \cite
{Nickl-07}). While
$\bb{P}_n$ is simple and elegant to
use in practice, these other estimators that are
$\mathrm{o}_{\bb{P}}(n^{-1/2})$ from $\bb{P}_n$ have been shown to improve upon
it in the
following
aspect: without any assumption on $\bb{P}$, it is possible to construct
adaptive estimators that estimate $\bb{P}$ efficiently in $\Cal{F}$ and
at the
same time estimate the density of $\bb{P}$ (if it exists without a priori
assuming its existence) at the best possible convergence rate in some relevant
loss over prescribed class of densities. Concretely, Gin{\'{e}} and
Nickl \cite{Gine-09a,Gine-10}
proved the above
behavior for kernel density estimator, wavelet density estimator and spline
projection estimator on $\bb{R}$ for
$\Cal{F}=\{\mathds{1}_{(-\infty,t]}\dvt t\in\bb{R}\}$ and sup-norm loss
over the H\"{o}lder balls.
By choosing
$\Cal{F}=\Cal{F}_{\mathrm{BL}}$ (with $d=1$), Gin{\'{e}} and Nickl \cite
{Gine-08b} showed that the kernel
density
estimator adaptively estimates $\bb{P}$ in weak topology and
at the same
time estimates the density of $\bb{P}$ in strong topology at the best possible
convergence rate over Sobolev balls.

The construction of these adaptive estimators involves applying Lepski's
method (Lepski, Mammen and Spokoiny \cite{Lepski-97}) to kernel density
estimators (in fact to any of the
other estimators we discussed above) that are within a $\Vert
\cdot\Vert_\Cal{F}$-ball of size smaller than $n^{-1/2}$ around
$\bb{P}_n$ and then using the exponential inequality of the type in
Theorem~\ref{Thm:estim-to-p} to control the probability of the event that
$\sqrt{n}\Vert\bb{P}_n\ast K_h-\bb{P}_n\Vert_{\Cal{F}}$ is ``too
large'' (see
Gin{\'{e}} and Nickl \cite{Gine-08b}, Theorem~1, \cite{Gine-09a}, Theorem~2, and
\cite{Gine-10}, Theorem~3, for the optimality of the adaptive estimator in
both $\Vert\cdot\Vert_\Cal{F}$ and some relevant loss over prescribed
class of
densities). Using Theorem~\ref{Thm:estim-to-p}, it is quite
straightforward in
principle to construct an adaptive estimator that is optimal
in both strong and weak topologies along the lines of Gin{\'{e}} and
Nickl (\cite{Gine-08b}, Theorem~1), by incorporating two minor changes in the proof of Theorem~1 in
\cite{Gine-08b}: the first change is to apply Theorem~\ref
{Thm:estim-to-p} in
the place of Lemma~1 and extend Lemma~2 in Gin{\'{e}} and Nickl \cite
{Gine-08b} from
$\bb{R}$ to $\bb{R}^d$. Informally, the procedure involves computing the
bandwidth $\tilde{h}_n$ as
%
\begin{eqnarray}
\tilde{h}_n&=&\max \biggl\{h\in\eu{H}\dvt \bigl\Vert
\bb{P}_n\ast( K_h-K_g)
\bigr\Vert_{L^1}\le \sqrt{\frac{A}{ng^d}}, \forall g<h, g\in\eu{H}
\nonumber
\\[-8pt]
\\[-8pt]
\nonumber
& & \hspace*{24pt}\mbox{and } \Vert\bb{P}_n\ast K_h-
\bb{P}_n\Vert_{\Cal{F}}\le\frac{n^{-{1}/{2}}}{\log n} \biggr\},
\label{Eq:lepski}
\end{eqnarray}
where
$\eu{H}:= \{h_k=\rho^{-k}\dvt k\in\bb{N}\cup\{0\}, \rho^{-k}>(\log
n)^2/n \}$ and $\rho>1$ is arbitrary. Here, $A$ depends on some moment
conditions on $\bb{P}\in\Cal{P}(\gamma,L)$, specifically through $\gamma
$ and
$L$, where
\[
\Cal{P}(\gamma,L)= \biggl\{\bb{P}\in M^1_+\bigl(\bb{R}^d
\bigr) \dvt \int\bigl(1+\Vert x\Vert^2_2
\bigr)^{\gamma} \,\mathrm{d}\bb{P}(x)\le L \biggr\}
\]
for some $L<\infty$ and $\gamma>\frac{d}{2}$. Along the lines of
Theorem~1 in
Gin{\'{e}} and Nickl \cite{Gine-08b}, the following result can be
obtained (we state here without a
proof) that shows the kernel density estimator with a purely data-driven
bandwidth, $\tilde{h}_n$ to be optimal in both strong and weak topologies.

\begin{thmm}\label{thmm:adaptive}
Let $(X_i)^n_{i=1}$ be random samples drawn i.i.d. from a probability
measure $\bb{P}\in\Cal{P}(\gamma,L)$ for some $L<\infty$ and
$\gamma>\frac{d}{2}$. Suppose $K$ is of order $r$ satisfying $r>T+\frac{d}{2}$,
$T\in\bb{N}\cup\{0\}$ such that $\int_{\bb{R}^d}(1+\Vert
x\Vert^2_2)^{\gamma}K^2(x) \,\mathrm{d}x<\infty$ where $p\in W^s_1(\bb{R}^d)$ for
some $0<s\le T$. If $\Cal{F}_H$ is $\bb{P}$-Donsker (satisfied by $\Cal
{K}$ in
Theorem~\ref{pro:singleton}), then
\[
\Vert\bb{P}_n\ast K_{\tilde{h}_n}-\bb{P}\Vert_{\Cal{F}_H}=\mathrm{O}_{\bb{P}}
\bigl(n^{-1/2}\bigr) \quad\mbox{and}\quad \sqrt{ n } (\bb{ P } _n\ast
K_{\tilde{h}_n}-\bb{P})\leadsto_{\ell^\infty(\Cal{F}_H)}\bb{G}_\bb{P}.
\]
Similarly, for $\Cal{K}$ in Theorem~\ref{thmm:examples}, we have
\[
\Vert\bb{P}_n\ast K_{\tilde{h}_n}-\bb{P}\Vert_{\Cal{K}_\Cal{X}}=\mathrm{O}_{\bb{P}}
\bigl(n^{-1/2}\bigr) \quad\mbox{and}\quad \sqrt{ n } (\bb{ P } _n\ast
K_{\tilde{h}_n}-\bb{P})\leadsto_{\ell^\infty(\Cal{K}_\Cal{X})}\bb{G}_\bb{P}.
\]
In addition, for any $0<s\le T$,
\[
\Vert \bb{P}_n\ast K_{\tilde{h}_n}-p\Vert_{L^1}=\mathrm{O}_{\bb{P}}
\bigl(n^{-{s}/{(2s+d)}} \bigr).
\]
\end{thmm}

We now discuss some computational aspects of the estimator in
(\ref{Eq:lepski}), which requires computing $\Vert\bb{P}_n\ast
K_h-\bb{P}_n\ast K_g\Vert_{L^1}$ and $\Vert\bb{P}_n\ast
K_h-\bb{P}_n\Vert_{\Cal{F}}$. While computing $\Vert\bb{P}_n\ast
K_h-\bb{P}_n\ast K_g\Vert_{L^1}$ is usually not straightforward, the
computation of $\Vert\bb{P}_n\ast
K_h-\bb{P}_n\Vert_{\Cal{F}}$ can be simple depending on the choice of
$\Cal{F}$. In the following, we show that $\Cal{F}=\Cal{F}_H$ yields a simple
maximization problem over a subset of $(0,\infty)$ depending on the
choice of
$\Cal{K}$ and $K$, in contrast to an infinite dimensional optimization problem
that would
arise if $\Cal{F}=\Cal{F}_{\mathrm{BL}}$ and optimization over $\bb{R}^{d}\times
(0,\infty)$ if $\Cal{F}=\Cal{K}_\Cal{X}$ therefore demonstrating the
computational
advantage of working with $\Cal{F}_H$ over $\Cal{F}_{\mathrm{BL}}$ and $\Cal
{K}_\Cal{X}$.

Consider $\Vert\bb{P}_n\ast
K_h-\bb{P}_n\Vert_{\Cal{F}_H}$, which from (\ref{Eq:MMD}) and (\ref{Eq:MMD-1})
yields
\begin{eqnarray*}
&&\Vert\bb{P}_n\ast K_h-\bb{P}_n
\Vert_{\Cal{F}_H}\\
&&\quad=\sup_{k\in\Cal{K}}\Biggl\llVert
\frac{1}{n} \sum^n_{i=1}\int
K_h(X_i-x)k(\cdot,x) \,\mathrm{d}x-\frac{1}{n}\sum
^n_{i=1}k(\cdot, X_i)\Biggr\rrVert
_{\Cal{H}_k}
\\
&&\quad=\frac{1}{n}\sup_{k\in\Cal{K}}\sqrt{\sum
^n_{i,j=1}\eu{A}(X_i,
X_j)+k(X_i, X_j) -2\int K_h(x)k(X_i-x,X_j)
\,\mathrm{d}x},
\end{eqnarray*}
which in turn reduces to
%
\begin{equation}
\Vert\bb{P}_n\ast K_h-\bb{P}_n
\Vert_{\Cal{F}_H}= \sqrt{\frac{1}{n^2}\sup_{k\in\Cal{K}}
\sum^n_{i,j=1} (K_h\ast
K_h\ast\psi+\psi-2K_h\ast\psi) (X_i-X_j)}\label{Eq:compute}
\end{equation}
when $k$ is translation invariant, that is,
$k(x,y)=\psi(x-y), x,y\in\bb{R}^d$, where
\[
\eu{A}(X_i,X_j):=\int\int K_h(x)K_h(y)k(X_i-x,X_j-y)
\,\mathrm{d}x \,\mathrm{d}y.
\]
While computing
(\ref{Eq:compute}) is not easy in general, in the following we present two
examples where~(\ref{Eq:compute}) is easily computable for appropriate
choices of $\Cal{K}$ and $K$. Let $\Cal{K}$ be as in
Theorem~\ref{thmm:examples}(a) (i.e., $\psi(x):=\psi_\sigma
(x)=\mathrm{e}^{-\sigma\Vert
x\Vert^2_2}, x\in\bb{R}^d, \sigma\in\Sigma$) and $K=\pi^{-d/2}\psi_1$. Then
%
\begin{equation}
\Vert\bb{P}_n\ast K_h-\bb{P}_n
\Vert_{\Cal{F}_H}=\frac{1}{n}\sqrt{\sup_{\sigma\in\Sigma}
\Cal{A}_\psi(\sigma)},\label{Eq:compute-fh}
\end{equation}
where
\[
\Cal{A}_\psi(\sigma):= \sum^n_{i,j=1}
\biggl(\frac{\psi_{{\sigma}/{
(2\sigma h^2+1)}}(X_i-X_j)}{(2\pi)^d(2\sigma
h^2+1)^{d/2}}-\frac{2\psi_{{\sigma}/{(\sigma h^2+1)}}(
X_i-X_j)}{(2\pi)^{d/2}(\sigma
h^2+1)^{d/2}}+\psi_{\sigma}(X_i-X_j)
\biggr).
\]
Also choosing $\Cal{K}$ to be as in Remark~\ref{rem:gauss-exm}, that is,
\[
\psi(x):=\phi_\alpha(x)=\prod^d_{i=1}
\frac{\alpha^2}{\alpha^2+x^2_i},\qquad x\in\bb { R } ^d, \alpha\in[c,\infty), c>0,
\]
which is a
special case of $\Cal{K}$ in Theorem~\ref{thmm:examples}(c)
and $K=\pi^{-d}\phi_1$ in (\ref{Eq:compute}) yields
\[
\Vert\bb{P}_n\ast K_h-\bb{P}_n
\Vert_{\Cal{F}_H}=\frac{1}{n}\sqrt{\sup_{\alpha\in
[c,\infty)}
\Cal{A}_\phi(\alpha)},
\]
where
\[
\Cal{A}_\phi(\alpha):=\sum^n_{i, j=1 }
\biggl(\frac{\phi_{\alpha+2h}(X_i-X_j)}{\alpha^{-d}2^d(\alpha
+2h)^d}-\frac{2\phi_
{ \alpha+h}
(X_i-X_j)}{2^{d/2}\alpha^{-d}(\alpha+h)^d}+\phi_\alpha(X_i-X_j)
\biggr).
\]
In both these examples (where $\Vert\cdot\Vert_{\Cal{F}_H}$ metrizes
the weak
topology on $M^1_+(\bb{R}^d)$), it is clear that one can compute
$\Vert\bb{P}_n\ast
K_h-\bb{P}_n\Vert_{\Cal{F}_H}$ easily by solving a maximization problem
over a
subset of $(0,\infty)$, which can be carried out using standard
gradient ascent
methods. For the choice of $K$ in both these examples, it is easy to see
that $K$ is of order $2$ and
therefore Theorem~\ref{thmm:examples} holds if $s<2-\frac{d}{2}$.

On the other hand, note that
\[
\Vert\bb{P}_n\ast K_h-\bb{P}_n
\Vert_{\Cal{F}_{\mathrm{BL}}}=\frac{1}{n}\sup_{f\in\Cal{F}_{\mathrm{BL}}}\Biggl\llvert
\sum^n_
{i=1}(K_h\ast f-f)
(X_i)\Biggr\rrvert
\]
is not easily computable in practice. Also for $\Cal{F}=\Cal{K}_\Cal
{X}$, we
have
\[
\Vert\bb{P}_n\ast K_h-\bb{P}_n
\Vert_{\Cal{K}_\Cal{X}}=\sup_{k\in\Cal{K},y\in\Cal{X}} \Biggl\llvert
\frac{1}{n} \sum^n_{i=1}\int
K_h(X_i-x)k(y,x) \,\mathrm{d}x-\frac{1}{n}\sum
^n_{i=1}k(y,X_i)\Biggr\rrvert ,
\]
which reduces to
\[
\Vert\bb{P}_n\ast K_h-\bb{P}_n
\Vert_{\Cal{K}_\Cal{X}}=\sup_{k\in\Cal{K},y\in\Cal{X}} \Biggl\llvert
\frac{1}{n} \sum^n_{i=1}(K_h
\ast\psi-\psi) (y-X_i)\Biggr\rrvert
\]
when $k(x,y)=\psi(x-y)$. For the choice of $K$ and $\Cal{K}$ as above (i.e.,
$\psi_\sigma$ and $\phi_\alpha$), it is easy to verify that the
computation of
$\Vert\bb{P}_n\ast
K_h-\bb{P}_n\Vert_{\Cal{K}_\Cal{X}}$ involves solving an optimization problem
over $\bb{R}^d\times(0,\infty)$ which is more involved than solving the
one in
(\ref{Eq:compute-fh}) that is obtained by working with
$\Cal{F}_H$.

In addition to the above application of adaptive estimation, there
are various statistical applications where the choice of $\Cal{F}_H$
can be
computationally useful (over $\Cal{F}_{\mathrm{BL}}$ and $\Cal{K}_\Cal{X}$), the
examples of which include the two-sample and independence testing. As an
example, in two-sample testing, $\Vert\bb{P}_n\ast K_h-\bb{Q}_m\ast
K_g\Vert_{\Cal{F}_H}$ can be used as a statistic to test for
$\bb{P}=\bb{Q}$ vs. $\bb{P}\ne\bb{Q}$ based on $n$ and $m$ numbers of random
samples drawn i.i.d. from $\bb{P}$ and $\bb{Q}$
respectively, assuming these distributions to have densities w.r.t. the Lebesgue
measure. Based on the above discussion, it is easy to verify that
$\Vert\bb{P}_n\ast K_h-\bb{Q}_m\ast K_g\Vert_{\Cal{F}}$ is simpler to compute
when $\Cal{F}=\Cal{F}_H$ compared to the other choices of $\Cal{F}$
such as
$\Cal{K}_\Cal{X}$ and $\Cal{F}_{\mathrm{BL}}$. Similarly, computationally
efficient test
statistics can be obtained for
nonparametric independence tests through $\Vert\cdot\Vert_{\Cal{F}_H}$.

\section{Proofs}\label{Sec:proofs}
In this section, we present the missing proofs of results in
Sections~\ref{Sec:rkhs} and \ref{Sec:mainresult}.

\subsection{Proof of Proposition \texorpdfstring{\protect\ref{pro:embedding}}{3.1}}\label
{subsec:embedding}
For any $f\in\Cal{H}_k$ and $\bb{P}\in\Scr{P}_\Cal{K}$, we have
\[
\int f(x) \,\mathrm{d}\bb{P}(x)=\int\bigl\langle f,k(\cdot,x)\bigr\rangle_{\Cal{H}_k} \,\mathrm{d}
\bb{P}(x)= \biggl\langle f,\int k(\cdot,x) \,\mathrm{d}\bb{P}(x) \biggr\rangle_{\Cal{H}_k},
\]
where the last equality
follows from
the assumption that $k$ is Bochner-integrable, that is,
\[
\int\bigl\Vert k(\cdot,x)\bigr\Vert_{\Cal{H}_k} \,\mathrm{d}\bb{P}(x)=\int \sqrt{k(x,x)} \,\mathrm{d}
\bb{P}(x)<\infty.
\]
Therefore, for any $\bb{P},\bb{Q}\in\Scr{P}_{\Cal{K}}$,
\begin{eqnarray*}
\sup_{k\in\Cal{K}}\sup_{\Vert f\Vert_{\Cal{H}_k}\le
1}\biggl\llvert \int f(x)
\,\mathrm{d}(\bb{P}-\bb{Q}) (x)\biggr\rrvert &=&\sup_{k\in\Cal{K}}\sup
_{\Vert
f\Vert_{\Cal{H}_k}\le1} \biggl\langle f,\int k(\cdot,x) \,\mathrm{d}(\bb{P}-\bb{Q}) (x)
\biggr\rangle_{\Cal{H}_k}
\\
&=&\sup_{k\in\Cal{K}}\biggl\llVert \int k(\cdot,x) \,\mathrm{d}(\bb{P}-
\bb{Q}) (x)\biggr\rrVert _{\Cal{H}_k},
\end{eqnarray*}
where the inner supremum is attained at $f=\frac{\int
k(\cdot,x) \,\mathrm{d}(\bb{P}-\bb{Q})(x)}{\llVert \int
k(\cdot,x) \,\mathrm{d}(\bb{P}-\bb{Q})(x)\rrVert _{\Cal{H}_k}}$. Because of the
Bochner-integrability of
$k$,
\[
\biggl\langle\int k(\cdot,x) \,\mathrm{d}\bb{P}(x),\int k(\cdot,y) \,\mathrm{d}\bb{Q}(y) \biggr
\rangle_{\Cal{H}_k}=\int\int k(x,y) \,\mathrm{d}\bb{P}(x) \,\mathrm{d}\bb{Q}(y)
\]
and (\ref{Eq:MMD-1}) follows.

\subsection{Proof of Theorem \texorpdfstring{\protect\ref{Thm:consistency}}{3.3}}\label{subsec:consistency}
Since $\sup_{k\in\Cal{K},x\in\Cal{X}}k(x,x)\le\nu$ and
$\bb{P}_n,\bb{P}\in\Scr{P}_\Cal{K}$, by Proposition~\ref
{pro:embedding}, we have
\[
\Vert\bb{P}_n-\bb{P}\Vert_{\Cal{F}_H}=\sup_{k\in\Cal{K}}
\biggl\llVert \int k(\cdot,x) \,\mathrm{d}(\bb{P}_n-\bb{P}) (x)\biggr\rrVert
_{\Cal{H}_k}.
\]
It is easy to check
that $\sup_{k\in\Cal{K}}\llVert \int
k(\cdot,x) \,\mathrm{d}(\bb{P}_n-\bb{P})(x)\rrVert _{\Cal{H}_k}$ satisfies the bounded
difference property and therefore, by McDiarmid's inequality, for every
$\tau>0$, with probability at least $1-\mathrm{e}^{-\tau}$,
%
\begin{eqnarray}
\label{Eq:Mc-1}\sup_{k\in\Cal{K}}\biggl\llVert \int k(\cdot,x) \,\mathrm{d}(
\bb{P}_n-\bb{P}) (x)\biggr\rrVert _{\Cal{H}_k}&\le & \bb{E}
\sup_{k\in\Cal{K}}\biggl\llVert \int k(\cdot,x) \,\mathrm{d}(\bb{P}_n-
\bb{P}) (x)\biggr\rrVert _{\Cal{H}_k}+\sqrt{\frac{2\nu
\tau}{n
} }
\nonumber
\\[-8pt]
\\[-8pt]
\nonumber
&\stackrel{(\ast)} {\le} & 2 \bb{E}\bb{E}_\varepsilon\sup
_{k\in\Cal{K}}\Biggl\llVert \frac{1}{n}\sum
^n_{i=1} \varepsilon_i k(
\cdot,X_i)\Biggr\rrVert _{\Cal{H}_k}+\sqrt{\frac{2\nu\tau}{n}},
\end{eqnarray}
where $(\varepsilon_i)^n_{i=1}$ represent i.i.d. Rademacher
variables, $\bb{E}_\varepsilon$ represents the expectation
w.r.t. $(\varepsilon_i)^n_{i=1}$ conditioned on $(X_i)^n_{i=1}$, and
$(\ast)$ is obtained by symmetrizing $\bb{E}\sup_{k\in\Cal{K}}\frak
{D}_k(\bb{P}_n,\bb{P})$
(see van der Vaart and Wellner \cite{Vaart-96}, Lemma~2.3.1). Since
$\bb{E}_\varepsilon\sup_{k\in\Cal{K}}\llVert \frac{1}{n}\sum^n_{i=1}
\varepsilon_i k(\cdot,X_i)\rrVert _{\Cal{H}_k}$ satisfies the bounded
difference property, another application of McDiarmid's inequality yields
that, for every $\tau>0$, with probability at least $1-\mathrm{e}^{-\tau}$,
%
\begin{eqnarray}\label{Eq:Mc-2}
&&\bb{E}\bb{E}_\varepsilon\sup_{k\in\Cal{K}}\Biggl\llVert
\frac{1}{n}\sum^n_{i=1}
\varepsilon_i k(\cdot,X_i)\Biggr\rrVert _{\Cal{H}_k}
\nonumber
\\[-8pt]
\\[-8pt]
\nonumber
&&\quad\le \bb{E}_\varepsilon\sup_{k\in\Cal{K}}\Biggl\llVert
\frac{1}{n}\sum^n_{i=1}
\varepsilon_i k(\cdot,X_i)\Biggr\rrVert
_{\Cal{H}_k}+\sqrt{\frac{2\nu\tau}{n}}
\end{eqnarray}
and, therefore, combining (\ref{Eq:Mc-1}) and (\ref{Eq:Mc-2}) yields
that for
every $\tau>0$, with probability at least $1-2\mathrm{e}^{-\tau}$,
%
\begin{eqnarray}\label{Eq:Mc-3}
&&\sup_{k\in\Cal{K}}\biggl\llVert \int k(\cdot,x) \,\mathrm{d}(
\bb{P}_n-\bb{P}) (x)\biggr\rrVert _{\Cal{H}_k}
\nonumber
\\[-8pt]
\\[-8pt]
\nonumber
&&\quad\le 2
\bb{E}_\varepsilon\sup_{k\in\Cal{K}}\Biggl\llVert \frac{1}{n}
\sum^n_{i=1} \varepsilon_i k(
\cdot,X_i)\Biggr\rrVert _{\Cal{H}_k}+\sqrt{\frac{18\nu\tau}{n}}.
\end{eqnarray}
Note that
%
\begin{eqnarray}
\label{Eq:Mc-4} \bb{E}_\varepsilon\sup_{k\in\Cal{K}}\Biggl\llVert
\frac{1}{n}\sum^n_{i=1}
\varepsilon_i k(\cdot,X_i)\Biggr\rrVert
_{\Cal{H}_k}&\le& \frac{1}{n}\sqrt{\bb{E}_\varepsilon
\sup_{k\in\Cal{K}}\sum^n_{i,j=1}
\varepsilon_i\varepsilon_j k(X_i,X_j)}
\nonumber
\\[-8pt]
\\[-8pt]
\nonumber
&\le& \frac{\sqrt{2}}{n}\sqrt{U_n\bigl(
\Cal{K};(X_i)^n_{i=1}\bigr)} +
\frac{\sqrt{\nu}}{\sqrt{n}},
\end{eqnarray}
where
\[
U_n\bigl(\Cal{K};(X_i)^n_{i=1}
\bigr):=\bb{E}_\varepsilon\sup_{k\in\Cal{K}}\Biggl\llvert \sum
^n_
{i<j}\varepsilon_i
\varepsilon_j k(X_i,X_j)\Biggr\rrvert
\]
is the expected suprema of
the Rademacher chaos process of degree 2, indexed by $\Cal{K}$. The
proof until
this point already appeared in Sriperumbudur
\textit{et al.} (\cite{Sriperumbudur-09c}, see the proof of
Theorem~7), but we have presented here for completeness.

The
result in
(\ref{Eq:empirical-consistent}) therefore follows by using
(\ref{Eq:Mc-4}) in (\ref{Eq:Mc-3}) and bounding
$U_n(\Cal{K};(X_i)^n_{i=1})$ through Lemma~\ref{lem:chaining} with
$\theta=\frac{3}{4}$. Using
(\ref{Eq:entropynumber}) in (\ref{Eq:empirical-consistent}) and solving for
$\alpha$ yields (\ref{Eq:concentration-empirical}) and~(\ref{Eq:rates-empirical}).

\begin{rem}\label{rem:pn-p-remark}
(i) Note that instead of using McDiarmid's inequality
in the above proof, one can
directly obtain a version of (\ref{Eq:Mc-3}) by applying Talagrand's inequality
through Theorem~2.1 in Bartlett, Bousquet and
Mendelson \cite{Bartlett-05}, albeit with worse constants and
similar dependency on $n$.

(ii) If $\Cal{K}$ is singleton, that is, $\Cal{K}=\{k\}$, then l.h.s. of
(\ref{Eq:Mc-4}) can be bounded as
\begin{eqnarray*}
\bb{E}_\varepsilon\sup_{k\in\Cal{K}}\Biggl\llVert
\frac
{1}{n}\sum^n_{i=1}
\varepsilon_i k(\cdot,X_i)\Biggr\rrVert
_{\Cal{H}_k}&\le& \frac{1}{n}\sqrt{\bb{E}_\varepsilon
\sum^n_{i,j=1} \varepsilon_i
\varepsilon_j k(X_i,X_j)}
\\
&\le& \frac{1}{n}\sqrt{\bb{E}_\varepsilon\sum
^n_{i\ne j} \varepsilon_i
\varepsilon_j k(X_i,X_j)}+\frac{\sqrt{\nu}}{\sqrt{n}},
\end{eqnarray*}
and, therefore,
%
\begin{equation}
\bb{E}_\varepsilon\sup_{k\in\Cal{K}}\Biggl\llVert
\frac{1}{n}\sum^n_{i=1}
\varepsilon_i k(\cdot,X_i)\Biggr\rrVert _{\Cal{H}_k}
\le\frac{\sqrt{\nu}}{\sqrt{n}}. \label{Eq:Mc-5}
\end{equation}
\end{rem}

\subsection{Proof of Theorem \texorpdfstring{\protect\ref{thmm:examples}}{4.2}}\label
{subsec:pro-examples}
The proof involves showing that the kernels in (a)--(d) satisfy the
conditions (i)--(iv) in Theorem~\ref{Thm:estim-to-p}, thereby ensuring
that (\ref{Eq:final-talagrand}) and (\ref{Eq:final-talagrand-1}) hold. However,
instead of bounding $\Cal{T}$ through bounds on the covering numbers of
$\Cal{K}$,
we directly bound the
expected suprema of the Rademacher
chaos process indexed by $\Cal{K}$ and $\Cal{K}_\alpha$,
that is, $U_n(\Cal{K},(X_i)^n_{i=1})$ and
$U_n(\Cal{K}_\alpha,(X_i)^n_{i=1})$ which are defined in
(\ref{Eq:chaos})~-- note that the terms involving $\Cal{T}$ in
(\ref{Eq:final-talagrand}) and (\ref{Eq:final-talagrand-1}) are in fact bounds
on $U_n(\Cal{K},(X_i)^n_{i=1})$ and $U_n(\Cal{K}_\alpha,(X_i)^n_{i=1})$
-- and
show that $U_n(\Cal{K},(X_i)^n_{i=1})=\mathrm{O}_\bb{P}(n)$ and
$U_n(\Cal{K}_\alpha,(X_i)^n_{i=1})=\mathrm{O}_\bb{P}(n)$. Using these results in
(\ref{Eq:Mc-4}) and (\ref{Eq:bound-gauss-exm}) and following the proofs of
Theorems \ref{Thm:consistency} and \ref{Thm:estim-to-p}, we have $\Vert
\bb{P}_n-\bb{P}\Vert_{\Cal{F}_H}=\mathrm{O}_{\mathrm{a.s.}}(n^{-1/2})$,
\[
\Vert K_h\ast\bb{P}_n-\bb{P}_n
\Vert_{\Cal{F}_H}\le \frac{E_1h^r}{\sqrt{n}}+A_2h^{r}
\]
and
\[
\Vert K_h\ast\bb{P}_n-\bb{P}\Vert_{\Cal{F}_H}\le
\frac{F_1h^r}{\sqrt{n}}+A_2h^{r}+\frac{F_2}{\sqrt{n}},
\]
%
where $E_1$ and $(F_i)^2_{i=1}$ are constants that do not depend on
$n$ (we do not provide the explicit constants here but can be easily
worked out
by following the proofs of Theorems \ref{Thm:consistency}
and \ref{Thm:estim-to-p}). Therefore, the result
follows.

In the
following, we show that for $\Cal{K}$ in
(a)--(d), (iv) in Theorem~\ref{Thm:estim-to-p}
holds (note that (i)--(iii) in Theorem~\ref{Thm:estim-to-p} hold trivially
because of the choice of
$\Cal{K}$) along with $U_n(\Cal{K},(X_i)^n_{i=1})=\mathrm{O}_\bb{P}(n)$ and
$U_n(\Cal{K}_\alpha,(X_i)^n_{i=1})=\mathrm{O}_\bb{P}(n)$. In order to obtain
bounds on $U_n(\Cal{K},(X_i)^n_{i=1})$ and $U_n(\Cal{K}_\alpha,(X_i)^n_{i=1})$,
we need an intermediate result (see Proposition~\ref{pro:vc} below)~--
also of
independent interest~-- which is based on the notion of
\emph{pseudo-dimension} (Anthony and Bartlett \cite{Anthony-99}, Definition~11.1) of a function
class $\Cal{F}$. It has to be noted that the pseudo-dimension of $\Cal{F}$
matches with the VC-index of a VC-subgraph class, $\Cal{F}$
(Anthony and Bartlett \cite{Anthony-99}, Chapter~11, page~153).

\begin{fin}[(Pseudo-dimension)]
Let $\Cal{F}$ be a set of real valued functions on $\Cal{X}$ and
suppose that
$S=\{z_1,\ldots,z_n\}\subset\Cal{X}$. Then $S$ is pseudo-shattered
by
$\Cal{F}$ if there are real numbers
$r_1,\ldots,r_n$ such that
for any $b\in\{-1,1\}^n$ there is a function $f_b\in\Cal{F}$ with
$\operatorname{sign}(f_b(z_i)-r_i)=b_i$ for $i=1,\ldots, n$. The
pseudo-dimension or
VC-index of $\Cal{F}$, $\mathit{VC}(\Cal{F})$ is the maximum cardinality of $S$
that is
pseudo-shattered by $\Cal{F}$.
\end{fin}

\begin{pro}\label{pro:vc}
Let
\[
\Cal{F}= \Biggl\{f_\sigma(x,y)=\sigma^\theta\prod
^d_{i=1} \bigl(\sigma (x_i-y_i
)^2 \bigr)^{\delta_i}\mathrm{e}^{-\sigma(x_{i}-y_{
i})^2}, x,y\in\bb{R}^d
\dvt \sigma\in(0,\infty) \Biggr\},
\]
where $x:=(x_1,\ldots,x_d)\in\bb{R}^d$, $y:=(y_1,\ldots,y_d)\in\bb
{R}^d$, $\theta\ge0$ and
$\delta_i>0$ for
any $i\in\{1,\ldots,d\}$. Then $\mathit{VC}(\Cal{F})\le2$. If
$\theta=\delta_1=\cdots=\delta_d=0$, then $\mathit{VC}(\Cal{F})=1$.
\end{pro}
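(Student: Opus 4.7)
\textbf{Proof proposal for Proposition~\ref{pro:vc}.} The plan is to reduce each $f_\sigma(x,y)$, for a fixed $(x,y)\in\bb{R}^{2d}$, to a simple one-parameter unimodal function of $\sigma$, and then count sign patterns as $\sigma$ varies. Setting $u_i:=(x_i-y_i)^2$, $C(x,y):=\prod_{i=1}^d u_i^{\delta_i}$, $s(x,y):=\sum_{i=1}^d u_i=\Vert x-y\Vert_2^2$ and $\eta:=\theta+\sum_{i=1}^d\delta_i$, we rewrite
\begin{equation*}
f_\sigma(x,y)=C(x,y)\,\sigma^{\eta}\,e^{-\sigma s(x,y)},
\end{equation*}
so that for fixed $(x,y)$ the dependence on $\sigma$ is governed entirely by the two scalars $C(x,y)\ge 0$ and $s(x,y)\ge 0$, together with the fixed exponent $\eta\ge 0$.

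Next I would fix an arbitrary candidate pseudo-shattered set $z_1,\ldots,z_n$ with thresholds $r_1,\ldots,r_n$ and study the single-variable maps $\phi_i(\sigma):=C_i\sigma^{\eta}e^{-\sigma s_i}-r_i$, writing $C_i:=C(z_i)$ and $s_i:=s(z_i)$. Because every $\delta_j>0$ in the generic case, $C_i=0$ forces $\phi_i$ to be constant in $\sigma$, and such a point can attain only one sign, so it cannot belong to any pseudo-shattered set. For the remaining indices $C_i>0$ and $s_i>0$, and differentiating $g_i(\sigma):=C_i\sigma^{\eta}e^{-\sigma s_i}$ gives $g_i^\prime(\sigma)=C_i\sigma^{\eta-1}(\eta-s_i\sigma)e^{-\sigma s_i}$. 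When $\eta>0$ this shows $g_i$ is strictly unimodal on $(0,\infty)$, increasing on $(0,\eta/s_i)$ and decreasing on $(\eta/s_i,\infty)$, so the equation $g_i(\sigma)=r_i$ admits at most two solutions; when $\eta=0$ the same expression shows $g_i$ is strictly decreasing and $g_i(\sigma)=r_i$ admits at most one solution. In either case $\phi_i$ has at most $2$ (resp.~$1$) zeros on $(0,\infty)$, and the boundary cases $r_i\le 0$ simply yield no zeros at all.

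I would then count sign patterns. Let $K$ denote the total number of zeros of $\phi_1,\ldots,\phi_n$ on $(0,\infty)$; the complement of these finitely many zeros is a disjoint union of at most $K+1$ open intervals, on each of which every $\phi_i$ is continuous and nonzero, so $(\mathrm{sign}\,\phi_1(\sigma),\ldots,\mathrm{sign}\,\phi_n(\sigma))\in\{-1,+1\}^n$ is constant. By the previous step $K\le 2n$ in general and $K\le n$ when $\eta=0$, yielding at most $2n+1$ (resp.~$n+1$) distinct sign patterns in $\{-1,+1\}^n$ as $\sigma$ ranges over $(0,\infty)$. Since pseudo-shattering requires all $2^n$ patterns to be attained, we obtain $2^n\le 2n+1$, forcing $n\le 2$, while in the $\eta=0$ case the sharper bound $2^n\le n+1$ forces $n\le 1$. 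Hence $VC(\Cal{F})\le 2$ in general and $VC(\Cal{F})\le 1$ when $\theta=\delta_1=\cdots=\delta_d=0$.

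For the matching lower bound $VC(\Cal{F})\ge 1$ in the $\eta=0$ case, I would take any single point $z_1=(x^{(1)},y^{(1)})$ with $x^{(1)}\ne y^{(1)}$ and choose $r_1\in(0,1)$: then $g_1(\sigma)=e^{-\sigma\Vert x^{(1)}-y^{(1)}\Vert_2^2}$ sweeps continuously from $1$ down to $0$ as $\sigma$ traverses $(0,\infty)$, so both signs of $\phi_1$ are realized. The main obstacle I anticipate is purely the bookkeeping around the degenerate cases ($C_i=0$ forcing $\phi_i$ constant, and $r_i\le 0$ forcing $\phi_i$ to be of one sign), which must be ruled out before counting; once these are dispensed with, the core argument rests entirely on the unimodality of $\sigma\mapsto\sigma^{\eta}e^{-\sigma s}$ and an elementary interval-counting argument.
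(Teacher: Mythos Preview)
Your argument is correct and takes a genuinely different route from the paper. The paper's proof is a direct contradiction: assuming a three-point set $\{(x_i,y_i)\}_{i=1}^3$ is pseudo-shattered, it selects the two complementary sign patterns $(-1,1,1)$ and $(1,-1,-1)$, writes out the resulting inequalities for the associated parameters $\sigma_1,\sigma_2$, and shows that after cancelling the factors $\prod_i(x_{ji}-y_{ji})^{2\delta_i}$ one is forced to have simultaneously $\sigma_1>\sigma_2$ and $\sigma_1<\sigma_2$ once the points are ordered by $\Vert x_j-y_j\Vert_2$. The $\eta=0$ case is then handled by the same trick on two points.

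Your approach instead exploits the structural fact that, after the substitution $f_\sigma(x,y)=C(x,y)\,\sigma^{\eta}e^{-\sigma s(x,y)}$, the map $\sigma\mapsto f_\sigma(z)$ is strictly unimodal (or strictly monotone when $\eta=0$) for every relevant $z$, so each threshold equation has at most two (resp.\ one) roots; a standard interval-counting argument then caps the number of realizable sign vectors by $2n+1$ (resp.\ $n+1$), forcing $n\le 2$ (resp.\ $n\le 1$). This is cleaner and more robust: it handles all sign patterns at once rather than a hand-picked pair, it makes the role of $\eta$ versus $\eta=0$ transparent, and it would generalize immediately to any one-parameter family whose pointwise traces are unimodal. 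The paper's argument, by contrast, is shorter on the page and avoids any zero-counting, but is more ad hoc and requires separately disposing of the various orderings $(P_{ijk})$ and $(E_{ijk})$. Your handling of the degenerate cases ($C_i=0$ or $r_i\le 0$ forcing a fixed sign, $s_i=0\Rightarrow C_i=0$ since every $\delta_j>0$) is exactly what is needed and matches the paper's implicit use of $x_{ji}-y_{ji}\ne 0$.
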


\begin{pf}
Suppose $\mathit{VC}(\Cal{F})> 2$. Then there exists a set
$S=\{(x_i,y_i)\in\bb{R}^d\times\bb{R}^d\dvt i\in\{1,2,3\}\}$ which is
pseudo-shattered by $\Cal{F}$, where $x_i=(x_{i1},\ldots,x_{id})\in\bb{R}^d$
and $y_i=(y_{i1},\ldots,y_{id})\in\bb{R}^d$. This implies
there exists $(r_1,r_2,r_3)\in\bb{R}^3$
such that for any $b\in\{-1,1\}^3$ there is a function $f_\sigma\in\Cal{F}$
with $\operatorname{sign}(f_\sigma(x_i,y_i)-r_i)=b_i$ for $i=1,2,3$. Without
loss of generality, let us assume
the points in $S$ satisfy
{\renewcommand{\theequation}{$\mathrm{P}_{213}$}
\begin{equation}\label{eqp123}
\Vert x_2-y_2\Vert_2
\le\Vert x_1-y_1\Vert_2 \le\Vert
x_3-y_3\Vert_2.
\end{equation}}
We now consider two cases.

\textit{Case} 1: $\Vert x_2-y_2\Vert_2 <\Vert x_1-y_1\Vert_2
< \Vert
x_3-y_3\Vert_2$:
 Let $b=(b_1,b_2,b_3)=(-1,1,1)$. Then there exists
$\sigma_1\in(0,\infty)$ such that the following hold:
\[
f_{\sigma_1}(x_1,y_1)<r_1,\qquad
f_{\sigma_1}(x_2,y_2)\ge r_2,\qquad
f_{\sigma_1}(x_3, y_3)\ge r_3.
\]
Similarly, for $b=(1,-1,-1)$, there exists $\sigma_2\in(0,\infty)$
such that
the
following hold:
\[
f_{\sigma_2}(x_1,y_1)\ge r_1,\qquad
f_{\sigma_2}(x_2,y_2)<r_2,\qquad
f_{\sigma_2}(x_3, y_3)<r_3.
\]
This implies
$f_{\sigma_2}(x_1,y_1)>f_{\sigma_1}(x_1,y_1), f_{\sigma_1}(x_2,y_2)>f_{
\sigma_2}(x_2,y_2), f_{\sigma_1}(x_3,y_3)>f_{\sigma_2}(x_3,y_3)$, that is,
\begin{eqnarray*}
\sigma^{\theta+\sum_i\delta_i}_2\prod^d_{i=1}
(x_{1i}-y_{1i} )^{2\delta_i}\mathrm{e}^{-\sigma_2\Vert
x_1 -y_1\Vert^2_2}& >&
\sigma^{\theta+\sum_i\delta_i}_1\prod^d_{i=1}
(x_{1i}-y_{1i} )^{2\delta_i}\mathrm{e}^{-\sigma_1\Vert
x_1 -y_1\Vert^2_2},\\
\sigma^{\theta+\sum_i\delta_i}_2\prod^d_{i=1}
(x_{2i}-y_{2i} )^{2\delta_i}\mathrm{e}^{-\sigma_2\Vert
x_2 -y_2\Vert^2_2} &<&
\sigma^{\theta+\sum_i\delta_i}_1\prod^d_{i=1}
(x_{2i}-y_{2i} )^{2\delta_i}\mathrm{e}^{-\sigma_1\Vert
x_2 -y_2\Vert^2_2},
\\
\sigma^{\theta+\sum_i\delta_i}_2\prod^d_{i=1}
(x_{3i}-y_{3i} )^{2\delta_i}\mathrm{e}^{-\sigma_2\Vert
x_3 -y_3\Vert^2_2}&<&
\sigma^{\theta+\sum_i\delta_i}_1\prod^d_{i=1}
(x_{3i}-y_{3i} )^{2\delta_i}\mathrm{e}^{-\sigma_1\Vert
x_3 -y_3\Vert^2_2}.
\end{eqnarray*}
It is clear that $x_{ji}-y_{ji}\ne0$ for any $i\in\{1,\ldots,d\}$ and
all $j\in\{1,2,3\}$ (otherwise leads to a contradiction). This implies
\begin{eqnarray*}
\mathrm{e}^{-(\sigma_1-\sigma_2)\Vert
x_2 -y_2\Vert^2_2}> \biggl(\frac{\sigma_2}{\sigma_1}
\biggr)^{\theta+\sum_i\delta_i}&>&\mathrm{e}^{
-(\sigma_1-\sigma_2)\Vert
x_1 -y_1\Vert^2_2},
\\
\mathrm{e}^{-(\sigma_1-\sigma_2)\Vert
x_3 -y_3\Vert^2_2}> \biggl(\frac{\sigma_2}{\sigma_1} \biggr)^{\theta+\sum_i\delta_i}&>&\mathrm{e}^{
-(\sigma_1-\sigma_2)\Vert
x_1 -y_1\Vert^2_2}
\end{eqnarray*}
and, therefore,
%
\setcounter{equation}{5}
\begin{equation}
(\sigma_1-\sigma_2) \bigl(\Vert x_2
-y_2\Vert^2_2-\Vert x_1
-y_1\Vert^2_2 \bigr)<0\label{Eq:vc-11}
\end{equation}
and
\[
(\sigma_1-\sigma_2) \bigl(\Vert x_3
-y_3\Vert^2_2-\Vert x_1
-y_1\Vert^2_2 \bigr)<0,
\]
which by our assumption $\Vert x_2-y_2\Vert_2 <\Vert x_1-y_1\Vert_2 <
\Vert
x_3-y_3\Vert_2$ implies $\sigma_1>\sigma_2$ and $\sigma_1<\sigma_2$ leading
to a
contradiction. Therefore, no 3-point set $S$ satisfying $\Vert
x_2-y_2\Vert_2 <\Vert x_1-y_1\Vert_2 < \Vert
x_3-y_3\Vert_2$ is pseudo-shattered by $\Cal{F}$.

\textit{Case} 2: \textit{At least one equality in} \eqref{eqp123} \textit{holds}:
Suppose $\Vert x_2-y_2\Vert_2 =\Vert x_1-y_1\Vert_2 < \Vert
x_3-y_3\Vert_2$. Then (\ref{Eq:vc-11}) yields a contradiction.
Similarly, a contradiction
arises if $\Vert x_2-y_2\Vert_2 <\Vert x_1-y_1\Vert_2 = \Vert
x_3-y_3\Vert_2$ or $\Vert x_2-y_2\Vert_2 =\Vert x_1-y_1\Vert_2 = \Vert
x_3-y_3\Vert_2$.

 Since every 3-point set $S$ satisfies \eqref{eqp123}, from cases
1 and 2, it follows that no 3-point set $S$ is pseudo-shattered by $\Cal
{F}$, which implies $\mathit{VC}(\Cal{F})\le2$.

If $\theta=\delta_i=0$ for all $i\in\{1,\ldots,d\}$, then
$\Cal{F}=\{f_\sigma(x,y)=\mathrm{e}^{-\sigma\Vert x-y\Vert^2_2}\dvt \sigma\in
(0,\infty)\}$.
Using
the same technique as above (also see the proof of Lemma~2 in Ying and
Campbell \cite{Ying-10}),
it can be shown that no two-point is shattered by
$\Cal{F}$ and, therefore, $\mathit{VC}(\Cal{F})=1$.
\end{pf}

\begin{pf*}{Proof of Theorem~\ref{thmm:examples}}
(a)
Consider
$\Cal{K}_\alpha:=\{\partial^{\alpha,\alpha}
\psi_\sigma(\cdot-\cdot)\dvt \sigma\in\Sigma\}$ for $|\alpha|=r$. It can be shown
that
\[
\partial^{\alpha,\alpha}\psi_\sigma(x-y)=\prod
^d_{i=1}(-1)^{\alpha
_i}\sigma^{
\alpha_i}
H_ { 2\alpha_i}\bigl(\sqrt{\sigma}(x_i-y_i)
\bigr)\mathrm{e}^{-\sigma(x_i-y_i)^2},
\]
where $x=(x_1,\ldots,x_d)\in\bb{R}^d$, $y=(y_1,\ldots,y_d)\in\bb{R}^d$ and
$H_l$ denotes the Hermite polynomial of degree $l$. By expanding
$H_{2\alpha_i}$ we obtain
%
\begin{eqnarray}
\label{Eq:partial-psi-1} \partial^{\alpha,\alpha}\psi_\sigma(x-y)&=&\sigma
^r\prod^d_{
i=1 } \sum
^ { \alpha_i } _ { j=0 } \eta_{ij} \bigl(
\sigma(x_i-y_i)^2 \bigr)^{j}\mathrm{e}^{-\sigma
(x_i-y_i)^2}
\nonumber
\\[-8pt]
\\[-8pt]
\nonumber
&=&\sum^{\alpha_1}_{j_1=0}\cdots\sum
^{\alpha_d}_{j_d=0}\prod^d_{i=1}
\eta_{
ij_i } \sigma^{\alpha_i+j_i} (x_i-y_i
)^{2j_i}\mathrm{e}^{-\sigma(x_i-y_i)^2},
\end{eqnarray}
where $\eta_{ij_i}$ are finite constants and $\eta_{i0}>0$ for all
$i=1,\ldots,d$. Therefore,
%
\begin{eqnarray}\label{Eq:derivative-1}
\sup_{\sigma\in\Sigma,x,y\in\bb{R}^d}\partial^{\alpha,\alpha} \psi_\sigma(x-y)&
\le&\sup_{\sigma\in\Sigma} \sigma^r \Biggl(\sum
^{\alpha}_{j=0}\prod^d_
{i=1}|
\eta_{ij_i}|j^{j_i}_i\mathrm{e}^{-j_i} \Biggr)
\nonumber
\\[-8pt]
\\[-8pt]
\nonumber
&=&a^r \Biggl(\sum^{
\alpha}_{j=0}
\prod^d_
{i=1}|\eta_{ij_i}|j^{j_i}_i\mathrm{e}^{-j_i}
\Biggr)<\infty, 
\end{eqnarray}
which implies (iv) in
Theorem~\ref{Thm:estim-to-p} is satisfied, where
$\sum^\alpha_{j=0}:=\sum^{\alpha_1}_{j_1=0}\cdots\sum^{\alpha
_d}_{j_d=0}$. Defining $\Cal{B}_2:=U_n(\Cal{K}_\alpha;(X_i)^n_{i=1})$,
we have
%
\begin{eqnarray}
\label{Eq:rademacher-gauss} \Cal{B}_2 &:=&\bb{E}_\varepsilon\sup
_{k^\prime\in
\Cal{K}_\alpha}\Biggl\llvert \sum^n_{i<j}
\varepsilon_i\varepsilon_jk^\prime(X_i,
X_j)\Biggr\rrvert
\nonumber
\\
&=&\bb{E}_\varepsilon\sup_{\sigma\in\Sigma
} \Biggl\llvert \sum
^n_ { p<
q}\varepsilon_p
\varepsilon_q\partial^{\alpha,\alpha}\psi_\sigma(X_p,
X_q)\Biggr\rrvert
\nonumber
\\
&\stackrel{(\ref{Eq:partial-psi-1})} {=} &\bb{E}_\varepsilon\sup
_
{\sigma\in\Sigma}\Biggl\llvert \sum^n_{p<
q}
\varepsilon_p\varepsilon_q\sum
^{\alpha_1}_{j_1=0}\cdots\sum^{\alpha_d}_{j_d=0}
\prod^d_{i=1}\eta_{ij_i}
\sigma^{\alpha_i+j_i} (X_{pi}-X_{qi} )^{2j_i}\mathrm{e}^{-\sigma(X_{pi}-X_{qi}
)^2 }
\Biggr\rrvert
\nonumber
\\
&=&\bb{E}_\varepsilon\sup_
{\sigma\in\Sigma}\Biggl\llvert \sum
^\alpha_{j=0} \Biggl(\prod
^d_{i=1}\eta_{ij_i} \Biggr) \sum
^n_{p<
q}\varepsilon_p
\varepsilon_q \prod^d_{i=1}
\sigma^{\alpha_i+j_i} (X_{pi}-X_{qi} )^{2j_i}\mathrm{e}^{-\sigma(X_{pi}-X_{qi}
)^2 }
\Biggr\rrvert
\nonumber
\\
&\le&\sum^{\alpha}_{j=0}
\Biggl(\prod^d_{i=1}\llvert
\eta_{ij_i}\rrvert \Biggr) \bb{E}_\varepsilon \sup
_
{\sigma\in\Sigma}\Biggl\llvert \sum^n_{p<
q}
\varepsilon_p\varepsilon_q \prod
^d_{i=1} \sigma^{\alpha_i+j_i}
(X_{pi}-X_{qi} )^{2j_i}\mathrm{e}^{-\sigma(X_{pi}-X_{qi}
)^2 }\Biggr
\rrvert
\nonumber
\\
&=&\sum^{\alpha_1}_{j_1=0}\cdots\sum
^{\alpha_d}_{j_d=0} \Biggl(\prod
^d_{i=1}\llvert \eta_{ij_i}\rrvert \Biggr)
\bb{E}_\varepsilon \sup_{k_{j_1\cdots j_d}\in\Cal{K}^{j_1\cdots j_d}_\alpha}\Biggl\llvert \sum
^n_{p<
q}\varepsilon_p
\varepsilon_qk_{j_1\cdots
j_d}(X_p,X_q)
\Biggr\rrvert ,
\end{eqnarray}
where
\[
\Cal{K}^{j_1\cdots
j_d}_\alpha:= \Biggl\{k_{j_1\cdots j_d}(x,y)=\prod
^d_{i=1} \sigma^{\alpha_i+j_i}
(x_{i}-y_{i} )^{2j_i}\mathrm{e}^{-\sigma(x_{i}-y_{i}
)^2}, x,y\in
\bb{R}^d \dvt \sigma\in\Sigma \Biggr\}.
\]
Since
\[
\sup_{k_{j_1,\ldots,j_d}\in\Cal{K}^{j_1\cdots
j_d}_\alpha,x,y\in\bb{R}^d}k_{j_1\cdots j_d}(x,y)\le a^r\mathrm{e}^{-\sum^d_{i=1}j_i}
\prod^d_{i=1}j^{j_i}_i:=
\zeta_{j_1\cdots
j_d}<\infty,
\]
by Lemma~\ref{lem:chaining}, we have
%
\begin{eqnarray}
\label{Eq:bound-gauss-exm-1}&&\bb{E}_\varepsilon \sup_{k_{j_1\cdots j_d}\in\Cal{K}^{j_1\cdots j_d}_\alpha}\Biggl\llvert
\sum^n_{p<
q}\varepsilon_p
\varepsilon_qk_{j_1\cdots j_d}(X_p,X_q)
\Biggr\rrvert
\nonumber
\\[-8pt]
\\[-8pt]
\nonumber
&&\quad
\le 2n^2 \Cal{T} \biggl(\Cal{K}^{j_1\cdots
j_d}_{\alpha},
\rho_{j_1\cdots j_d},\frac{\zeta_{j_1\cdots
j_d}}{2} \biggr) +\frac{n\zeta_{j_1\cdots
j_d}}{\sqrt{2}},
\end{eqnarray}
where $\Cal{T}$ and $\rho_{j_1\cdots j_d}$ (same as $\rho_\alpha$ but defined
on $\Cal{K}^{j_1\cdots j_d}_\alpha$) are defined in the statement of
Theorem~\ref{Thm:estim-to-p}. Since every element of $\Cal{K}^{j_1\cdots
j_d}_\alpha$ is nonnegative and bounded above by
$\zeta_{j_1\cdots j_d}$, we obtain the diameter of $\Cal{K}^{j_1\cdots
j_d}_\alpha$ to be bounded above by $\zeta_{j_1\cdots j_d}$ and,
therefore, we
used $\zeta_{j_1\cdots j_d}/2$ as an argument for $\Cal{T}$ in
(\ref{Eq:bound-gauss-exm-1}). Proposition~\ref{pro:vc} shows that
$\Cal{K}^{j_1\cdots j_d}_\alpha$ is a VC-subgraph with VC-index,
$V:=\mathit{VC}(\Cal{K}^{j_1\cdots j_d}_\alpha)\le2$ for any $0\le
j_i\le\alpha_i, i=1,\ldots,d$, which by Theorem~2.6.7 in van der Vaart
and Wellner \cite{Vaart-96} implies
that
%
\begin{eqnarray}\label{Eq:cover}
&&\Cal{N}\bigl(\Cal{K}^{j_1\cdots j_d}_\alpha,\rho_{j_1\cdots
j_d},
\epsilon\bigr)
\nonumber
\\[-8pt]
\\[-8pt]
\nonumber
&&\quad\le C^\prime V (16\mathrm{e})^{V} \biggl(
\frac{\zeta_{j_1\cdots
j_d}}{\epsilon} \biggr)^{2(V-1)},\qquad 0<\epsilon<\zeta_{j_1\cdots
j_d}
\end{eqnarray}
for some universal constant, $C^\prime$ and, therefore,
%
\begin{equation}
\Cal{T} \biggl(\Cal{K}^{j_1\cdots
j_d}_{\alpha},\rho_{j_1\cdots j_d},
\frac{\zeta_{j_1\cdots
j_d}}{2} \biggr)\le\frac{C^{\prime\prime}_{j_1\cdots
j_d}}{n},\label{Eq:covering-exm}
\end{equation}
where
$C^{\prime\prime}_{j_1\cdots j_d}$ is a constant that depends on
$C^\prime$, $V$
and $\zeta_{j_1\cdots
j_d}$. Combining (\ref{Eq:bound-gauss-exm-1}) and (\ref
{Eq:covering-exm}) in
(\ref{Eq:rademacher-gauss}), we obtain
\begin{eqnarray*}
&&U_n\bigl(\Cal{K}_\alpha;(X_i)^n_{i=1}
\bigr)\\
&&\quad\le n \sum^{\alpha_1}_{j_1=0}\cdots\sum
^{\alpha_d}_{j_d=0} \Biggl(\prod
^d_{i=1}\llvert \eta_{ij_i}\rrvert \Biggr)
\biggl(2C^{\prime\prime}_{
j_1\cdots j_d}+\frac{\zeta_{j_1\cdots
j_d}}{\sqrt{2}}
\biggr)\\
&&\quad =\mathrm{O}_{\bb{P}}(n).
\end{eqnarray*}
Also, since $\Cal{K}$ is a VC-subgraph with $\mathit{VC}(\Cal{K})=1$, from
(\ref{Eq:cover}) we obtain $\Cal{N}(\Cal{K},\rho,\epsilon)$ is a constant
independent of $\epsilon$. Following the
analysis as above, it is easy to show that
$U_n(\Cal{K};(X_i)^n_{i=1})=\mathrm{O}_\bb{P}(n)$.

(b) Since
$\partial^{\alpha,\alpha}\int^\infty_0
\psi_\sigma(x-y) \,\mathrm{d}\Lambda(\sigma)=\int^\infty_0
\partial^{\alpha,\alpha}\psi_\sigma(x-y) \,\mathrm{d}\Lambda(\sigma)$ holds by
Theorem~2.27(b) in
Folland \cite{Folland-99}, define
\[
\Cal{K}_\alpha:= \biggl\{\int^\infty_0
\partial^{\alpha,\alpha}\psi_\sigma(x-y) \,\mathrm{d}\Lambda(\sigma), x,y\in
\bb{R}^d \dvt \Lambda\in \Cal{M}_A \biggr\}.
\]
Therefore,
\begin{eqnarray*}
\sup_{k^\prime\in\Cal{K}_\alpha,x,y\in
\bb{R}^d}k'(x,y)&=&\sup
_{\Lambda\in\Cal{M}_A,
x,y\in\bb{R}^d}\int^\infty_0
\partial^{\alpha,\alpha}\psi_\sigma(x-y) \,\mathrm{d}\Lambda(\sigma)
\\
&\stackrel{(\ref{Eq:derivative-1})} {\le} & \Biggl(\sum
^{
\alpha}_{j=0}\prod^d_
{i=1}|
\eta_{ij_i}|j^{j_i}_i\mathrm{e}^{-j_i} \Biggr)\sup
_{\Lambda\in\Cal{M}_A} \int^\infty_0
\sigma^r \,\mathrm{d}\Lambda(\sigma)
\\
&=&A \Biggl(\sum^{
\alpha}_{j=0}\prod
^d_
{i=1}|\eta_{ij_i}|j^{j_i}_i\mathrm{e}^{-j_i}
\Biggr)<\infty,
\end{eqnarray*}
and so $\Cal{K}$ satisfies (iv) in
Theorem~\ref{Thm:estim-to-p}. Now consider
%
\begin{eqnarray}
\label{Eq:exm2-U} U_n\bigl(\Cal{K};(X_i)^n_{i=1}
\bigr) 
&=&
\bb{E}_\varepsilon\sup_{\Lambda\in\Cal{M}_A}\Biggl\llvert \sum
^n_{p<q} \varepsilon_p
\varepsilon_q\int^\infty_0
\psi_\sigma(X_p-X_q) \,\mathrm{d}\Lambda(\sigma)\Biggr
\rrvert
\nonumber
\\[-8pt]
\\[-8pt]
\nonumber
&\le&\bb{E}_\varepsilon\sup_{\sigma\in(0,\infty)}\Biggl\llvert \sum
^n_{p<q} \varepsilon_p
\varepsilon_q \psi_\sigma(X_p-X_q)
\Biggr\rrvert .
\end{eqnarray}
By Proposition~\ref{pro:vc}, since $\{\psi_\sigma(x-y)\dvt \sigma\in
(0,\infty)\}$
is a VC-subgraph, carrying out the analysis (following (\ref
{Eq:cover})) in
(a), we obtain $U_n(\Cal{K};(X_i)^n_{i=1})=\mathrm{O}_{\bb{P}}(n)$. Also,
\begin{eqnarray*}
U_n\bigl(\Cal{K}_\alpha;(X_i)^n_{i=1}
\bigr)&:=&\bb{E}_\varepsilon\sup_{k^\prime\in
\Cal{K}_\alpha}\Biggl\llvert
\sum^n_{p<q}\varepsilon_p
\varepsilon_q k^\prime(X_p,X_q)
\Biggr\rrvert
\\
&=&\bb{E}_\varepsilon\sup_{\Lambda\in\Cal{M}_A}\Biggl\llvert \sum
^n_{p<q} \varepsilon_p
\varepsilon_q\int^\infty_0
\partial^{\alpha,\alpha}\psi_\sigma(X_p-X_q) \,\mathrm{d}
\Lambda(\sigma)\Biggr\rrvert
\\
&\le& \bb{E}_\varepsilon\sup_{\Lambda\in\Cal{M}_A}\int
^\infty_0\Biggl\llvert \sum
^n_{p<q} \varepsilon_p
\varepsilon_q \partial^{\alpha,\alpha}\psi_\sigma(X_p-X_q)
\Biggr\rrvert \,\mathrm{d}\Lambda(\sigma)
\\
&\le& A \bb{E}_\varepsilon\sup
_{\sigma\in(0,\infty)}\Biggl\llvert \sum^n_{p<q}
\varepsilon_p\varepsilon_q\sigma^{-r}
\partial^{\alpha,\alpha}\psi_\sigma(X_p-X_q)
\Biggr\rrvert 
=:A U_n\bigl(
\Cal{L};(X_i)^n_{i=1}\bigr),
\end{eqnarray*}
where
\[
\Cal{L}:= \bigl\{\sigma^{-r} \partial^{\alpha,\alpha}
\psi_\sigma(x-y), x,y\in\bb{R}^d \dvt \sigma\in (0,\infty)
\bigr\}.
\]
Replicating the analysis in (\ref{Eq:rademacher-gauss})
for
$U_n(\Cal{L};(X_i)^n_{i=1})$ in conjunction with Proposition~\ref
{pro:vc}, it is
easy to show that
$U_n(\Cal{L};(X_i)^n_{i=1})=\mathrm{O}_{\bb{P}}(n)$
and, therefore, $U_n(\Cal{K}_\alpha;(X_i)^n_{ i=1})=\mathrm{O}_{\bb
{P}}(n)$.

(c) It is easy to check that any $k\in\Cal{K}$ is of
the form
\[
k(x,y)=\prod^d_{i=1}\int
^\infty_0 \mathrm{e}^{-\sigma_i(x_i-y_i)^2} \,\mathrm{d}
\Lambda_i(\sigma_i).
\]
Therefore,
\[
\Cal{K}_\alpha= \Biggl\{\prod^d_{i=1}
\int^\infty_0\partial^{\alpha
_i,\alpha_i}
\psi_{\sigma_i}(x_i-y_i) \,\mathrm{d}\Lambda_i(
\sigma_i), x,y\in\bb{R}^d \dvt \Lambda_i\in
\Cal{M}_{A_i}, i=1,\ldots,d \Biggr\}
\]
and
\begin{eqnarray*}
\sup_{k^\prime\in\Cal{K}_\alpha,x,y\in\bb{R}^d}k^\prime(x,y)&=&\prod
^d_{i=1} \sup_{\Lambda_i\in\Cal{M}_{A_i},x_i,y_i\in\bb{R}}\int
^\infty_0\partial ^{\alpha_i
,\alpha_i}
\psi_{\sigma_i}(x_i-y_i) \,\mathrm{d}\Lambda_i(
\sigma_i)
\\
&=&\prod^d_{i=1} A_i
\sum^{\alpha_i}_{j=0}|\eta_{ij}|j^j\mathrm{e}^{-j}
<\infty,
\end{eqnarray*}
which implies $\Cal{K}$ satisfies (iv) in
Theorem~\ref{Thm:estim-to-p}. Now consider
%
\begin{eqnarray*}
\label{Eq:exm3-U} U_n\bigl(\Cal{K};(X_i)^n_{i=1}
\bigr)&=& 
\bb{E}_\varepsilon\sup_{\Lambda\in\Cal{Q}_A}\Biggl\llvert \sum
^n_{p<q} \varepsilon_p
\varepsilon_q\int \mathrm{e}^{-(X_p-X_q)^T\Delta(X_p-X_q)} \,\mathrm{d}\Lambda(\Delta)\Biggr\rrvert
\nonumber
\\
&\le&\bb{E}_\varepsilon\sup_{\operatorname{diag}(\Delta)\in
(0,\infty)^d}\Biggl\llvert \sum
^n_{p<q} \varepsilon_p
\varepsilon_q \mathrm{e}^{-(X_p-X_q)^T\Delta
(X_p-X_q)}\Biggr\rrvert
\\
&=:& U_n\bigl(\Cal{J};(X_i)^n_{i=1}
\bigr),\nonumber
\end{eqnarray*}
where
\[
\Cal{J}:= \Biggl\{\tilde{k}(x,y)=\mathrm{e}^{-(x-y)^T\Delta
(x-y)}=\prod
^d_{i=1}\mathrm{e}^{-\sigma_i(x_i-y_i)^2}, x,y\in
\bb{R}^d:  \operatorname{diag}(\Delta)\in (0,\infty)^d \Biggr\}.
\]
Define
\[
\Cal{J}_i:= \bigl\{\tilde{k}^i(x,y)=\mathrm{e}^{-\sigma_i(x_i-y_i)^2},
x_i,y_i\in \bb{R} \dvt \sigma_i\in(0,\infty)
\bigr\}.
\]
It is easy to check that for any
$\tilde{k}_1,\tilde{k}_2\in\Cal{J}$, $\rho(\tilde{k}_1,\tilde{k}_2)\le
\sqrt{d}
\sum^d_{i=1}\rho(\tilde{k}^i_1,\tilde{k}^i_2)$, where
$\tilde{k}^i_1,\tilde{k}^i_2\in\Cal{J}_i$ and
$\Cal{N}(\Cal{J},\rho,\epsilon)=\prod^d_{i=1}\Cal{N}(\Cal{J}_i,\rho,
d^{-3/2}\epsilon)$. By Proposition~\ref{pro:vc}, since $\Cal{J}_i$ is a
VC-subgraph for any $i=1,\ldots,d$, from the analysis in (a), we
obtain $\Cal{N}(\Cal{J}_i,\rho,
\epsilon)=\mathrm{O}(1)$ and, therefore,
\[
U_n\bigl(\Cal{K};(X_i)^n_{i=1}
\bigr)\le U_n\bigl(\Cal{J};(X_i)^n_{i=1}
\bigr)=\mathrm{O}_{\bb{P}}(n).
\]
Similarly,
\begin{eqnarray*}
&&U_n\bigl(\Cal{K}_\alpha;(X_i)^n_{i=1}
\bigr)\\
&&\quad:=\bb{E}_\varepsilon\sup_{k^\prime\in
\Cal{K}_\alpha}\Biggl\llvert
\sum^n_{p<q}\varepsilon_p
\varepsilon_q k^\prime(X_p,X_q)
\Biggr\rrvert
\\
&&\quad=\bb{E}_\varepsilon\sup_{\Lambda_i\in\Cal{M}_{A_i}, i\in
[d]}\Biggl\llvert \sum
^n_{p<q} \varepsilon_p
\varepsilon_q\prod^d_{i=1}
\int^\infty_0 \partial^{\alpha_i,\alpha_i}
\psi_{\sigma_i}(X_{pi}-X_{qi}) \,\mathrm{d}\Lambda_i(
\sigma_i)\Biggr\rrvert
\\
&&\quad=\bb{E}_\varepsilon\sup_{\Lambda_i\in\Cal{M}_{A_i}, i\in
[d]}\Biggl\llvert \int
^\infty_0\cdots\int^\infty_0
\sum^n_{p<q} \varepsilon_p
\varepsilon_q\prod^d_{i=1}
\partial^{\alpha_i,\alpha_i}\psi_{
\sigma_i}(X_{pi}-X_{qi})
\prod^d_{i=1} \,\mathrm{d}\Lambda_i(
\sigma_i)\Biggr\rrvert
\\
&&\quad\le \Biggl(\prod^d_{i=1}A_i
\Biggr)\bb{E}_\varepsilon\sup_{\operatorname{diag}(\Delta)\in(0,\infty)^d}\Biggl\llvert \sum
^n_{p<q} \varepsilon_p
\varepsilon_q\prod^d_{i=1}
\sigma^{-\alpha_i}_i\partial ^{\alpha_i,
\alpha_i }
\psi_{\sigma_i}(X_{pi}-X_{qi})\Biggr\rrvert
\\
&&\quad=: \Biggl(\prod^d_{i=1}A_i
\Biggr)U_n\bigl(\Cal {I};(X_i)^n_{i=1}
\bigr),
%
%
\end{eqnarray*}
where $[d]:=\{1,\ldots,d\}$ and
\begin{eqnarray*}
&&\Cal{I}:= \Biggl\{\check{k}(x,y)=\prod^d_{i=1}
\sigma^{-\alpha_i}_i\partial^{
\alpha_i,
\alpha_i }
\psi_{\sigma_i}(x_{i}-y_{i}),\\
&&\hspace*{28pt} x,y\in
\bb{R}^d \dvt (\sigma_1,\ldots, \sigma_d)
\in(0,\infty)^d \Biggr\}.
\end{eqnarray*}
We now proceed as above to obtain a bound on
$U_n(\Cal{I};(X_i)^n_{i=1})$ through $\Cal{N}(\Cal{I},\rho,\epsilon)$ by
defining
\[
\Cal{I}_i:= \bigl\{\check{k}^i(x,y)=
\sigma^{-\alpha_i}_i\partial^{
\alpha_i,\alpha_i}\psi_{\sigma_i}(x_{i}-y_{i}),
x_i,y_i\in\bb{R} \dvt \sigma_i\in(0,
\infty) \bigr\}
\]
and noting that for any $\check{k}_1,\check{k}_2\in\Cal{I}$, we have
$\rho(\check{k}_1,\check{k}_2)\le Bd^{3/2}\rho(\check{k}^i_1,\check{k}^i_2)$
where $\check{k}^i_1,\check{k}_2^i\in\Cal{I}_i$, $B:=\max_{i\in
\{1,\ldots,d\}}\sum^{\alpha_i}_{j=0}|\eta_{ij}|j^j\mathrm{e}^{-j}$ and
$\Cal{N}(\Cal{I},\rho,\epsilon)=\prod^d_{i=1}\Cal{N}(\Cal{I}_i,\rho,B^{-1}d^{
-3/2}\epsilon)$. Proceeding with the covering number analysis in
(a), it can be shown that $\Cal{I}_i$ is a VC-subgraph with
$\mathit{VC}(\Cal{I}_i)\le2$ for any $i=1,\ldots,d$ and, therefore,
$\Cal{N}(\Cal{I},\rho,\epsilon)=\mathrm{O}(\epsilon^{-2})$, which means
\begin{eqnarray*}
U_n\bigl(\Cal{K}_\alpha;(X_i)^n_{i=1}
\bigr)&\le& \Biggl(\prod^d_{i=1}A_i
\Biggr) U_n\bigl(\Cal{I};(X_i)^n_{i=1}
\bigr)\\
&=&\mathrm{O}_{\bb{P}}(n).
\end{eqnarray*}

(d) First we derive an alternate form for $k\in\Cal{K}$ which
will be useful to
prove the result. To this end, by Theorem~6.13 in Wendland \cite
{Wendland-05}, any
$k\in\Cal{K}$ can be written as the Fourier transform of
$\frac{Ac^{2\beta-d}\Gamma(\beta)}{2^{1-\beta}}(c^2+\Vert
\omega\Vert^2_2)^{-\beta}$, that is, for any $c>0$,
%
\begin{eqnarray}
\label{Eq:ft} k(x,y)&=&A\frac{\llVert
x-y\rrVert ^{\beta-{d}/{2}}_2}{c^{{d}/{2}-\beta}}\frak {K}_{{d}/{
2}-\beta} \bigl(c\Vert
x-y\Vert_2 \bigr)
\nonumber
\\[-8pt]
\\[-8pt]
\nonumber
&=&
\frac{Ac^{2\beta-d}\Gamma(\beta)}{(2\pi)^{d/2}2^{1-\beta}} \int_ { \bb{ R } ^d } \mathrm{e}^{-\sqrt{-1}(x-y)^T\omega}
\bigl(c^2+\Vert\omega\Vert^2_2
\bigr)^{-\beta} \,\mathrm{d}\omega.
\end{eqnarray}
By the Sch\"{o}nberg representation for radial
kernels (see (\ref{Eq:schoenberg})), it follows from Wendland \cite{Wendland-05}, Theorem~7.15, that
%
\begin{equation}
\bigl(c^2+\Vert \omega\Vert^2_2
\bigr)^{-\beta}=\frac{1}{\Gamma(\beta)}\int^\infty_0
\mathrm{e}^{-t\Vert\omega\Vert^2_2}t^{\beta-1}\mathrm{e}^{-c^2t} \,\mathrm{d}t.\label{Eq:sch}
\end{equation}
Combining (\ref{Eq:ft}) and (\ref{Eq:sch}), we have
\[
k(x,y)=\frac{Ac^{2\beta-d}}{(2\pi)^{d/2}2^{1-\beta}}\int_{\bb{R}^d} \mathrm{e}^{-\sqrt{-1}(x-y)^T\omega} \int
^\infty_0 \mathrm{e}^{-t\Vert\omega\Vert^2_2}t^{\beta-1}\mathrm{e}^{-c^2t}
\,\mathrm{d}t \,\mathrm{d}\omega,
\]
which after applying Fubini's theorem yields
%
\begin{eqnarray}
\label{Eq:matern-equiv} k(x,y)&=&\frac{Ac^{2\beta-d}}{(2\pi)^{d/2}2^{1-\beta}} \int^\infty_0
\int_{\bb{R}^d} \mathrm{e}^{-\sqrt{-1}(x-y)^T\omega}
\mathrm{e}^{-t\Vert\omega\Vert^2_2} \,\mathrm{d}\omega\,
t^{\beta-1}\mathrm{e}^{-c^2t} \,\mathrm{d}t
\nonumber
\\[-8pt]
\\[-8pt]
\nonumber
&=&\frac{c^{2\beta-d}}{\Gamma(\beta-{d}/{2})} \int^\infty_0
\mathrm{e}^{-{(\Vert x-y\Vert^2_2)}/{(4t)}}t^{\beta-1-{d}/{2}}\mathrm{e}^{-c^2t} \,\mathrm{d}t.
\end{eqnarray}
Note that
\[
\sup_{k\in\Cal{K},x,y\in\Cal{X}}k(x,y)\le \sup_{c\in(0,a]}
\frac{c^{2\beta-d}}{\Gamma(\beta-{d}/{2})} \int^\infty_0
t^{\beta-1-{d}/{2}}\mathrm{e}^{-c^2t} \,\mathrm{d}t=1,
\]
implying that $\Cal{K}$ satisfies (iii) in
Theorem~\ref{Thm:estim-to-p}. Using (\ref{Eq:matern-equiv})
in $U_n(\Cal{K};(X_i)^n_{i=1})$, we have
\begin{eqnarray*}
&&U_n\bigl(\Cal{K};(X_i)^n_{i=1}
\bigr)\\
&&\quad=\bb{E}_\varepsilon\sup_{k\in\Cal{K}} \Biggl\llvert
\sum^n_
{i<j}\varepsilon_i
\varepsilon_j k(X_i,X_j)\Biggr\rrvert
\nonumber
\\
&&\quad=\bb{E}_\varepsilon\sup_{
c\in
(0,a]}\frac{c^{2\beta-d}}{\Gamma(\beta-{d}/{2})}
\Biggl\llvert \sum^n_
{i<j}
\varepsilon_i\varepsilon_j \int^\infty_0
\mathrm{e}^{-{(\Vert X_i-X_j\Vert^2_2)}/{(4t)}}
t^{\beta-1-{d}/{2}}\mathrm{e}^{-c^2t} \,\mathrm{d}t\Biggr\rrvert
\nonumber
\\
&&\quad\le\bb{E}_\varepsilon\sup_{
t\in(0,\infty)
} \Biggl\llvert \sum
^n_
{i<j}\varepsilon_i
\varepsilon_j\mathrm{e}^{-{(\Vert X_i-X_j\Vert^2_2)}/{(4t)}}\Biggr\rrvert \sup_{c\in
(0,a]}
\frac{c^{2\beta-d}}{\Gamma(\beta-{d}/{2})}\biggl\llvert \int^\infty_0
t^{\beta-1-{d}/{2}}\mathrm{e}^{-c^2t} \,\mathrm{d}t\biggr\rrvert
\nonumber
\\
&&\quad=\bb{E}_\varepsilon\sup_{\sigma\in(0,\infty)} \Biggl\llvert \sum
^n_
{i<j}\varepsilon_i
\varepsilon_j\mathrm{e}^{-\sigma\Vert
X_i-X_j\Vert^2_2}\Biggr\rrvert ,
\nonumber
\end{eqnarray*}
and, therefore, it follows (see Remark~\ref{Rem:supp}(i))
that $U_n(\Cal{K};(X_i)^n_{i=1})=\mathrm{O}_\bb{P}(n)$. Now for $|\alpha
|=m\wedge r$, let
us consider
%
\begin{eqnarray}
\label{Eq:derivative}k^\prime(x,y)&:=&\partial^{\alpha,\alpha
}k(x,y)=
\frac{c^{
2\beta-d}}{\Gamma(\beta-{d}/{2})} \int^\infty_0 \bigl(
\partial^{\alpha,\alpha}\mathrm{e}^{-{(\Vert x-y\Vert^2_2)}/{(4t)}}
\bigr)t^{\beta-1-{d}/{2}}\mathrm{e}^{-c^2t}
\,\mathrm{d}t\nonumber
\\
&=&\frac{c^{2\beta-d}}{\Gamma(\beta-{d}/{2})} \int^\infty_0
\bigl((4t)^{m\wedge r}\partial^{\alpha,\alpha} \mathrm{e}^{-{(\Vert x-y\Vert^2_2)}/{(4t)}}
\bigr)t^{\beta-1-{d}/{2}}(4t)^{-(m\wedge
r)}\mathrm{e}^{-c^2t} \,\mathrm{d}t\qquad\nonumber
\\
&=&\frac{c^{2\beta-d}}{\Gamma(\beta-{d}/{2})4^{m\wedge r}} \int^\infty_0
\bigl((4t)^{m\wedge r}\partial^{\alpha,\alpha}
\mathrm{e}^{-{(\Vert x-y\Vert^2_2)}/{(4t)}}
\bigr)t^{\beta-1-{d}/{2}-(m\wedge r)}\mathrm{e}^{-c^2t} \,\mathrm{d}t,\hspace*{30pt}
\end{eqnarray}
where the equality in the first line follows from
Folland \cite{Folland-99}, Theorem~2.27(b). The above implies
\begin{eqnarray*}
\sup_{k^\prime\in\Cal{K}_\alpha,x,y\in\Cal{X}}k^\prime(x,y)&\le& \sup_{\sigma\in(0,\infty),x,y\in\Cal{X}}
\bigl\llvert \sigma^{-(m\wedge
r)}\partial^{\alpha,\alpha} \mathrm{e}^{-\sigma\Vert x-y\Vert^2_2
}\bigr
\rrvert \frac{\Gamma(\beta-{d}/{2}-m\wedge
r)a^{2(m\wedge r)}}{\Gamma(\beta-{d}/{2})4^{m\wedge r}}\\
&<&\infty,
\end{eqnarray*}
therefore satisfying (iv) in Theorem~\ref{Thm:estim-to-p}. Using
(\ref{Eq:derivative}) we now obtain a bound on
$\Cal{B}_3:=U_n(\Cal{K}_\alpha;(X_i)^n_{i=1})$ as follows by defining
$B:=\Gamma(\beta-\frac{d}{2})4^{m\wedge r}$.
\begin{eqnarray*}
\Cal{B}_3&=&\bb{E}_\varepsilon\sup_{k^\prime\in\Cal
{ K }_\alpha}
\Biggl\llvert \sum^n_
{i<j}
\varepsilon_i\varepsilon_j k^\prime(X_i,X_j)
\Biggr\rrvert
\nonumber
\\
&=&\bb{E}_\varepsilon\sup_{c\in
(0,a]}\frac{c^{
2\beta-d}}{B}
\Biggl\llvert \sum^n_
{i<j}
\varepsilon_i\varepsilon_j \int^\infty_0
\bigl((4t)^{m\wedge
r}\partial^{\alpha,\alpha} \mathrm{e}^{-{(\Vert X_i-X_j\Vert^2_2)}/{(4t)}}
\bigr)t^{\beta-1-{d}/{2}-(m\wedge r)}\mathrm{e}^{-c^2t} \,\mathrm{d}t\Biggr\rrvert
\nonumber
\\
&\le& \bb{E}_\varepsilon\sup_{t\in(0,\infty)
} \Biggl\llvert \sum
^n_
{i<j}\varepsilon_i
\varepsilon_j(4t)^{m\wedge r}\partial^{\alpha,\alpha}
\mathrm{e}^{-{(\Vert X_i-X_j\Vert^2_2)}/{(4t)}}\Biggr\rrvert\\
&&{}\times  \sup_{c\in
(0,a]}\frac{c^{
2\beta-d}}{B}
\int^\infty_0 t^{\beta-1-{d}/{2}-(m\wedge r)}\mathrm{e}^{-c^2t}
\,\mathrm{d}t
\nonumber
\\
&\le&\frac{\Gamma(\beta-{d}/{2}-m\wedge
r)a^{2(m\wedge r)}}{\Gamma(\beta-{d}/{2})4^{m\wedge
r}} \bb{E}_\varepsilon\sup_{\sigma\in(0,\infty)}
\Biggl\llvert \sum^n_
{i<j}
\varepsilon_i\varepsilon_j\sigma^{-(m\wedge r)}
\partial^{\alpha
,\alpha} \mathrm{e}^{-\sigma\Vert
X_i-X_j\Vert^2_2}\Biggr\rrvert ,
\nonumber
\end{eqnarray*}
and so $U_n(\Cal{K}_\alpha;(X_i)^n_{i=1})=\mathrm{O}_\bb{P}(n)$, which follows
from the proof of Theorem~\ref{thmm:examples}(ii).
\end{pf*}


%
\begin{rem}
Note that instead of following the indirect route~-- showing $\Cal
{K}^{j_1\cdots
j_d}_\alpha$ to be a VC-subgraph and then bounding
$U_n(\Cal{K}_\alpha;(X_i)^n_{i=1})$~-- of showing
the result in
Theorem~\ref{Thm:estim-to-p} for the Gaussian kernel family as
presented in
(a), one can directly get the result by
obtaining a bound on
$\Cal{N}(\Cal{K}_\alpha,\rho_\alpha,\epsilon)$ as presented in
Proposition~\ref{Pro:covering}, under the assumption that $\Cal{X}=(a_0,b_0)^d$
for some $-\infty<a_0<b_0<\infty$. The advantage with the analysis in
(a) is
that the result holds for
$\Cal{X}=\bb{R}^d$ rather than
a bounded subset of $\bb{R}^d$. Also the proof technique in (a) is
useful and
interesting as it avoids the difficult problem of bounding the covering numbers
of $\Cal{K}$ and $\Cal{K}_\alpha$ for kernel classes in (b)--(d) while
allowing to handle these classes easily through (a).
\end{rem}

\subsection{Proof of the claim in
Remark \texorpdfstring{\protect\ref{rem:gauss-exm}}{4.3}(iii)}\label{sec:proof-weak}
We show that $\Cal{K}$ in
(a)--(c) satisfy the conditions in Theorem~\ref{Thm:weak} and, therefore,
metrize the weak topology on $M^1_+(\bb{R}^d)$. Note that the families in
(a)--(c) are uniformly bounded and every $k\in\Cal{K}$ is such that
$k(\cdot,x)\in C_0(\bb{R}^d)$ for all $x\in\bb{R}^d$. It therefore
remains to
check (\ref{Eq:dense}) and $(\mathrm{P})$ in Theorem~\ref{Thm:weak}. By
Proposition~5 in
Sriperumbudur, Fukumizu and
Lanckriet \cite{Sriperumbudur-11a} (see (17) in its proof), it is clear
that (\ref{Eq:dense}) is satisfied for $\Cal{K}$ in (a) and (b).
For (c),
\begin{eqnarray}\label{Eq:spd}
B&:=&\int_{\bb{R}^d}\int_{\bb{R}^d} k(x,y) \,\mathrm{d}
\mu(x) \,\mathrm{d}\mu(y)\nonumber\\
& =&\int_{\bb{R}^d}\int_{\bb{R}^d}\prod
^d_{j=1} \int^\infty_0
\mathrm{e}^{-\sigma(x_j-y_j)^2} \,\mathrm{d}\Lambda_j(\sigma)
 \,\mathrm{d}\mu(x) \,\mathrm{d}\mu(y)
\nonumber
\\
&=&\int_{\bb{R}^d}\int_{\bb{R}^d}\prod
^d_{j=1} \int^\infty_0
\frac{1}{(4\pi\sigma)^{d/2}}\int_{\bb{R}}\mathrm{e}^{-\sqrt{-1}\omega_j(x_j-y_j)}
\mathrm{e}^{-{\omega^2_j}/{(4\sigma)}} \,\mathrm{d}\omega_j \,\mathrm{d}\Lambda_j(\sigma) \,\mathrm{d}\mu(x) \,\mathrm{d}
\mu(y)\qquad\quad\nonumber
\\
\nonumber
&=&
\int_{\bb{R}^d}\int_{\bb{R}^d}\int
_{\bb{R}^d}\mathrm{e}^{-\sqrt{-1}\omega^T(x-y)} \prod^d_{ j=1}
\int^\infty_0 \frac{1}{(4\pi\sigma)^{d/2}}\mathrm{e}^{-{\omega^2_j}/{(4\sigma)}}
\,\mathrm{d}\Lambda _j(\sigma) \,\mathrm{d}\omega \,\mathrm{d}\mu(x) \,\mathrm{d}\mu(y)
\\
&=&\int_{\bb{R}^d}\bigl|\widehat{\mu}(\omega)\bigr|^2
\Biggl(\prod^d_{
j=1}\int
^\infty_0 \frac{1}{(4\pi\sigma)^{d/2}}\mathrm{e}^{-{\omega^2_j}/{(4\sigma)}} \,\mathrm{d}
\Lambda_j(\sigma) \Biggr) \,\mathrm{d}\omega,
\end{eqnarray}
where we have invoked Fubini's theorem in the last two lines of
(\ref{Eq:spd}) and $\widehat{\mu}$ denotes the Fourier transform of $\mu
$. Since
$\operatorname{supp}(\Lambda_j)\ne\{0\}$ for all $j=1,\ldots,d$, the inner
integrals in
(\ref{Eq:spd}) are positive for every $\omega_j\in\bb{R}$ and so
(\ref{Eq:dense}) holds.

We now show that $(\mathrm{P})$ in Theorem~\ref{Thm:weak} is satisfied by
$\Cal{K}$
in (a)--(c). Consider $\Cal{K}$ in (b). Fix $x\in\bb{R}^d$ and
$\epsilon>0$. Define $U_{x,\epsilon}=\{y\in\bb{R}^d\dvt \Vert
x-y\Vert_2<(4\delta
\log\frac{2B}{2-\epsilon^2})^{1/4}\} $, where
$\delta$ and $B$ are as mentioned in the statement of
Theorem~\ref{thmm:examples}. Then for any $k\in\Cal{K}$ and $y\in
U_{x,\epsilon}$,
\begin{eqnarray*}
\bigl\Vert k(\cdot,x)-k(\cdot,y)\bigr\Vert^2_{\Cal{H}_k}&=& 2-2\int
^\infty_0 \mathrm{e}^{-\sigma\Vert x-y\Vert^2_2} \,\mathrm{d}\Lambda(\sigma)
\\
&\le&2-2 \biggl(\inf_{\Lambda\in\Cal{M}_A}\int^\infty_0
\mathrm{e}^{-\delta\sigma^2} \,\mathrm{d}\Lambda(\sigma) \biggr) \Bigl(\inf_{\sigma\in
(0,\infty)}\mathrm{e}^{
-\sigma\Vert x-y\Vert^2_2}\mathrm{e}^{\delta\sigma^2}
\Bigr)
\\
&\le&2-2B\mathrm{e}^{-{\Vert x-y\Vert^4_2}/{(4\delta)}}<\epsilon^2.
\end{eqnarray*}
For $\Cal{K}$ in (c), define $U_{x,\epsilon}:= \{y\in\bb
{R}^d\dvt \Vert
x-y\Vert_\infty< (4\min_i\delta_i\log\frac{2\prod^d_{i=1}B_i}{2-\epsilon^2}
 )^{1/4}  \}$ for some fixed $x\in\bb{R}^d$ and $\epsilon>0$.
Then as above, it
is easy to show that for any $k\in\Cal{K}$ and $y\in U_{x,\epsilon}$,
\begin{eqnarray*}
\bigl\Vert k(\cdot,x)-k(\cdot,y)\bigr\Vert^2_{\Cal{H}_k}&=& 2-2\prod
^d_{i=1}\int^\infty_0
\mathrm{e}^{-\sigma(x_i-y_i)^2} \,\mathrm{d}\Lambda_i(\sigma)
\\
&\le&2-2\prod^d_{i=1} \biggl(\inf
_{\Lambda_i\in\Cal{M}_{A_i}}\int^\infty_0
\mathrm{e}^{-\delta_i\sigma^2} \,\mathrm{d}\Lambda_i(\sigma) \biggr) \Bigl(\inf
_{\sigma\in
(0,\infty)} \mathrm{e}^ {
-\sigma( x_i-y_i)^2}\mathrm{e}^{\delta_i\sigma^2} \Bigr)
\\
&\le&2-2\prod^d_{i=1}B_i\mathrm{e}^{-{(x_i-y_i)^4}/{(4\delta_i)}}
\le 2-2\prod^d_{i=1}B_i\mathrm{e}^{-{\Vert x-y\Vert^4_\infty}/{(4\min_i\delta_i)}}<
\epsilon^2,
\end{eqnarray*}
thereby proving the result.

\subsection{Proof of Theorem \protect\ref{pro:singleton}}\label
{subsec:thmm-singleton}
In the following, we prove that the class $\Cal{F}_H$ induced by the family
$\Cal{K}$ in (a)--(d) are Donsker and, therefore, the result simply follows
from Theorem~\ref{thmm:examples}. To this end, we first prove that $\Cal{K}$
in (d) is Donsker which will be helpful to prove the claim for the kernel
classes in (a)--(c).

(d) Since $k$ is continuous and bounded and $\Cal{X}$ is
separable, by Steinwart and Christmann~\cite{Steinwart-08}, Lemma~4.33, the RKHS $\Cal{H}_k$ induced by $k$ is
separable and every $f\in\Cal{H}_k$ is also
continuous and bounded. In addition, the
inclusion $\mathrm{id}\dvtx\Cal{H}_k\rightarrow
C_b(\Cal{X})$ is linear and continuous (Steinwart and
Christmann \cite{Steinwart-08},
Lemma~4.28).
Therefore, by Marcus (\cite{Marcus-85}, Theorem~1.1), $\Cal{F}_H=\{f\in
\Cal{H}_k\dvt \Vert
f\Vert_{\Cal{H}_k}\le1\}$ is $\bb{P}$-Donsker, that is,
$\sqrt{n} (\bb{P}_n-\bb{P})\leadsto_{\ell^\infty(\Cal{F}_H)}\bb{G}_\bb{P}$.
Also, $\sqrt{n} (\bb{P}_n\ast K_h-\bb{P})=\sqrt{n} (\bb{P}_n\ast
K_h-\bb{P}_n)+\sqrt{n} (\bb{P}_n-\bb{P})\leadsto_{\ell^\infty(\Cal
{F}_H)}\bb{G}
_\bb{P}$ by Slutsky's lemma and Theorem~\ref{Thm:estim-to-p}.

(a)--(c) From (\ref{Eq:fh-subset}), we have
\[
\Cal{F}_H\subset\bigcup_{\sigma\in
[a,b]} \biggl
\{f\in\Cal{H}_b\dvt \Vert f\Vert_{\Cal{H}_b}\le \biggl(
\frac{b}{\sigma} \biggr)^{d/4} \biggr\}= \biggl\{f\in
\Cal{H}_b\dvt \Vert f\Vert_{\Cal{H}_b}\le \biggl(
\frac{b}{a} \biggr)^{d/4} \biggr\}=:\Cal{B}.
\]
Using the argument as in (d), it is easy to verify that
$\Cal{H}_b$ is separable and $\mathrm{id}\dvtx\Cal{H}_b\mapsto C_b(\Cal
{X})$ is linear
and continuous and, therefore, $\Cal{B}$ is $\bb{P}$-Donsker, which implies
$\Cal{F}_H$ is Donsker by van der Vaart and Wellner \cite{Vaart-96}, Theorem~2.10.1. The result therefore
follows using Slutsky's lemma and Theorem~\ref{thmm:examples}. The
proof of
(c) is similar to that of in (a) but we use
(\ref{Eq:fh-subset-1}) instead of (\ref{Eq:fh-subset}). For~(b), the result hinges on a relation similar to those
in (\ref{Eq:fh-subset}) and (\ref{Eq:fh-subset-1}), which we derive below.
Let $\Cal{K}$ be the kernel family as shown in (\ref
{Eq:multiquadrics}). Then
for $k\in\Cal{K}$, let $\Cal{H}_c$ be the induced RKHS. From
Wendland \cite{Wendland-05}, Theorems 6.13 and 10.12, it follows that
for any
$f\in\Cal{H}_c$,
\[
\Vert f\Vert^2_{\Cal{H}_c}=\frac{\Gamma(\beta)}{2^{1-\beta}}\int \bigl|\widehat{f}(
\omega)\bigr|^2 \frac{c^{-d}}{(c\Vert
\omega\Vert_2)^{\beta-{d}/{2}}\fr{K}_{{d}/{2}-\beta}
(c\Vert\omega\Vert_2)} \,\mathrm{d}\omega.
\]
By Wendland \cite{Wendland-05}, Corollary~5.12, since
for every $\nu\in\bb{R}$, $x\mapsto x^\nu\fr{K}_{-\nu}(x)$ is
nonincreasing on
$(0,\infty)$, we have that for any $0<\tau<c<\infty$,
\[
\Vert f\Vert_{\Cal{H}_\tau}\le \biggl(\frac{c}{\tau} \biggr)^{{d}/{2}}
\Vert f\Vert_{\Cal{H}_c}
\]
and so $\Cal{H}_c\subset\Cal{H}_\tau$. Therefore, we have
\[
\Cal{F}_H\subset\bigcup_{c\in[a,\infty)} \biggl
\{f\in\Cal{H}_a\dvt \Vert f\Vert_{\Cal{H}_a}\le \biggl(
\frac{c}{a} \biggr)^{d/2} \biggr\}=\Cal{H}_a.
\]
For
the choice of $\Cal{K}$ in Theorem~\ref{pro:singleton}(b), we have
%
\begin{equation}
\Cal{F}_H\subset\bigcup_{c\in[a,b]} \biggl
\{f\in\Cal {H}_a\dvt \Vert f\Vert_{\Cal{H}_a}\le \biggl(
\frac{c}{a} \biggr)^{d/2} \biggr\}= \biggl\{f\in \Cal{H}
_a\dvt \Vert f\Vert_{\Cal{H}_a}\le \biggl(\frac{b}{a}
\biggr)^{d/2} \biggr\}\label{Eq:fh-subset2}
\end{equation}
and the rest follows.

\subsection{Proof of Proposition
\texorpdfstring{\protect\ref{pro:bound}}{5.1}}\label{subsec:probound}
By definition,
\begin{eqnarray*}
\Vert \bb{P}-\bb{Q}\Vert_{\Cal{K}_\Cal{X}}&=&\sup_{k\in\Cal{K}, x\in\Cal{X}}
\biggl\llvert \int k(x,y) \,\mathrm{d}(\bb{P}-\bb{Q}) (y)\biggr\rrvert
\\
&=&\sup_{k\in\Cal{K}, x\in\Cal{X}} \biggl\llvert \int\bigl\langle k(
\cdot,x),k(\cdot,y)\bigr\rangle_{\Cal{H}_k} \,\mathrm{d}(\bb{P}-\bb{Q}) (y)\biggr\rrvert.
\end{eqnarray*}
Since $\Cal{K}$ is uniformly bounded, $k(\cdot,x)$ is Bochner-integrable
for all $k\in\Cal{K}$ and $x\in\Cal{X}$, that is,
\[
\int \bigl\Vert k(\cdot,x)\bigr\Vert_{\Cal{H}_k} \,\mathrm{d}\bb{P}(x)=\int\sqrt{k(x,x)} \,\mathrm{d}\bb
{P}(x)\le \sqrt{\nu}\qquad \forall k\in\Cal{K}, x\in\Cal{X},
\]
and, therefore,
\begin{eqnarray*}
\Vert \bb{P}-\bb{Q}\Vert_{\Cal{K}_\Cal{X}}&=&\sup_{k\in\Cal{K}, x\in\Cal{X}}
\biggl\llvert \int k(y,x) \,\mathrm{d}(\bb{P}-\bb{Q}) (y)\biggr\rrvert
\\
&=&\sup_{k\in\Cal{K}, x\in\Cal{X}} \biggl\llvert \biggl\langle k(\cdot,x),\int
k(\cdot,y) \,\mathrm{d}(\bb{P}-\bb{Q}) (y) \biggr\rangle_{\Cal{H}_k}\biggr\rrvert
\\
&\le& \sup_{k\in\Cal{K}, x\in\Cal{X}}\bigl\Vert k(\cdot,x)\bigr\Vert_{\Cal{H}_k}
\frak{D}_k(\bb{P},\bb{Q})\le\sqrt{\nu}\Vert \bb{P}-\bb{Q}
\Vert_{\Cal{F}_H},
\end{eqnarray*}
which proves the lower bound on $\Vert
\bb{P}-\bb{Q}\Vert_{\Cal{F}_H}$ in (\ref{Eq:bound-distance}). To prove the
upper bound, consider
\begin{eqnarray*}
\Vert \bb{P}-\bb{Q}\Vert^2_{\Cal{F}_H}&\stackrel{(\ref{Eq:MMD-1})}
{=} &\sup_{
k\in\Cal{ K } } \int\int k(x,y) \,\mathrm{d}(\bb{P}-\bb{Q}) (x) \,\mathrm{d}(
\bb{P}-\bb{Q}) (y)
\\
&\le&\sup_{k\in\Cal{K}}\int\biggl\llvert
\int k(x,y) \,\mathrm{d}(\bb{P}-\bb{Q}) (y)\biggr\rrvert \,\mathrm{d}|\bb{P}-\bb{Q}|(x)
\\
&\le&2 \sup_{k\in\Cal{K}}\sup_{x\in\Cal{X}}\biggl
\llvert \int k(x,y) \,\mathrm{d}(\bb{P}-\bb{Q}) (y)\biggr\rrvert =2 \Vert \bb{P}-\bb{Q}
\Vert_{\Cal{K}_\Cal{X}},
\end{eqnarray*}
thereby proving the result in (\ref{Eq:bound-distance}). Equation~(\ref{Eq:chain-pro})
simply follows from Theorem~\ref{Thm:weak} and (\ref{Eq:bound-distance}).

\subsection{Proof of Theorem \texorpdfstring{\protect\ref{thmm:uclt}}{5.2}}\label{subsec:uclt}
In order to prove Theorem~\ref{thmm:uclt}, we need a lemma (see
Lemma~\ref{lem:fatshatter} below) which is based on the notion of
\emph{fat-shattering dimension} (see  Anthony and Bartlett \cite{Anthony-99},
Definition~11.10), defined as follows.

\begin{fin}[(Fat-shattering dimension)]\label{def:fatshatter}
Let $\Cal{F}$ be a set of real-valued functions defined on $\Cal{X}$.
For every
$\epsilon>0$, a set $S=\{z_1,\ldots, z_n\}\subset\Cal{X}$ is said to be
$\epsilon$-shattered by $\Cal{F}$ if there exists real numbers
$r_1,\ldots,r_n$
such that for each $b\in\{0,1\}^n$ there is a function $f_b\in\Cal{F}$ with
$f_b(z_i)\ge r_i+\epsilon$ if $b_i=1$ and $f_b(z_i)\le r_i-\epsilon$ if
$b_i=0$, for $1\le i\le n$. The fat-shattering dimension of $\Cal{F}$ is
defined as
\[
\mathrm{fat}_{\epsilon}(\Cal{F})=\sup \bigl\{|S| {|} S\subset\Cal{X},
S \mbox{ is }
\epsilon\mbox{-shattered by } \Cal{F} \bigr\}.
\]
\end{fin}

\begin{lem}\label{lem:fatshatter}
Define
\[
\Cal{G}:= \bigl\{\mathrm{e}^{-\sigma(\cdot-x)^2} \dvt \sigma\in (0,\infty), x\in\bb{R} \bigr
\}.
\]
Then $\mathrm{fat}_{\epsilon}(\Cal{G})\le
1+\lfloor\epsilon^{-1}\rfloor$. In addition, there exists a universal constant
$c^\prime$ such that for every empirical
measure $\bb{P}_n$, and every $0<\epsilon\le1$,
\[
\log\Cal{N}\bigl(\Cal{G},L^2(\bb{P}_n),\epsilon\bigr)\le
c^\prime \biggl(1+\frac{8}{\epsilon} \biggr)\log^2 \biggl(
\frac{2}{\epsilon
}+\frac{16
}{\epsilon^2} \biggr).
\]
\end{lem}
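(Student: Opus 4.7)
The proof splits into the fat-shattering bound and the covering-number bound.

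For the fat-shattering bound, suppose $\{z_1<z_2<\cdots<z_n\}$ is $\epsilon$-shattered by $\Cal{G}$ with witness levels $r_1,\ldots,r_n$; since elements of $\Cal{G}$ take values in $(0,1]$, these satisfy $r_i\in[\epsilon,1-\epsilon]$. The key geometric fact is that every $f(z)=e^{-\sigma(z-x)^2}$ is unimodal and symmetric about $x$, so for any three ordered points $z_{k-1}<z_k<z_{k+1}$ the inequality $|z_k-x|\le\max(|z_{k-1}-x|,|z_{k+1}-x|)$ gives
\[
f(z_k)\ge\min(f(z_{k-1}),f(z_{k+1})).
\]
I will exploit this by evaluating the shattering condition on the binary pattern that is $0$ at a single internal position $k$ and $1$ elsewhere: its witness Gaussian $g_k$ satisfies $g_k(z_k)\le r_k-\epsilon$ and $g_k(z_{k\pm 1})\ge r_{k\pm 1}+\epsilon$, which together with the displayed inequality forces $r_k\ge\min(r_{k-1},r_{k+1})+2\epsilon$ for every $1<k<n$.

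This constraint forbids any internal valley in the sequence $(r_i)$, so $(r_i)$ is unimodal with a unique peak at some position $p$. On the strictly increasing side ($1<i<p$) the constraint reduces to $r_i-r_{i-1}\ge 2\epsilon$, and on the strictly decreasing side ($p<i<n$) to $r_i-r_{i+1}\ge 2\epsilon$, while the constraint at the peak itself supplies an additional $2\epsilon$ gap to one of the two adjacent neighbours (the choice being determined by the sign of $r_{p-1}-r_{p+1}$). A short case analysis, telescoping these bounds and using $r_i\in[\epsilon,1-\epsilon]$, shows that in either case $(r_p-r_1)+(r_p-r_n)\ge 2(n-2)\epsilon$; combined with $r_p-r_i\le 1-2\epsilon$ for $i\in\{1,n\}$ this yields $n\le 1/\epsilon$, and hence $\text{fat}_\epsilon(\Cal{G})\le\lfloor 1/\epsilon\rfloor\le 1+\lfloor 1/\epsilon\rfloor$.

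For the covering-number claim I then invoke a standard entropy-from-fat-shattering theorem (for instance the Mendelson--Vershynin estimate, or the $L^2$ form of the Alon--Ben-David--Cesa-Bianchi--Haussler bound) which asserts that for any uniformly bounded class $\Cal{F}\subset[0,1]^\Cal{X}$, any probability measure $Q$, and universal constants $K,c,c^\prime$,
\[
\log\Cal{N}(\Cal{F},L^2(Q),\epsilon)\le K\cdot\text{fat}_{c\epsilon}(\Cal{F})\cdot\log^2(c^\prime/\epsilon).
\]
Substituting $\text{fat}_{c\epsilon}(\Cal{G})\le 1+1/(c\epsilon)$ and absorbing the universal constants into the factors $8/\epsilon$ and $2/\epsilon+16/\epsilon^2$ in the statement yields the claimed bound. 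The main technical subtlety will be the careful bookkeeping in the telescoping argument around the peak $p$, which is what prevents losing an additional constant and getting only the cruder $n\le 2+1/\epsilon$; once the fat-shattering estimate is in place, the covering-number conversion is a routine application of a known theorem.
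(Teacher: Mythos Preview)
Your argument is correct but takes a genuinely different route from the paper. The paper's proof is a two-line application of a known result: since every $g\in\Cal{G}$ satisfies $\int_{\bb R}|g'(y)|\,dy=2$ (each Gaussian rises monotonically from $0$ to $1$ and then falls back to $0$), the class $\Cal{G}$ is contained in the ball of total variation $2$, and Theorem~11.12 of Anthony--Bartlett gives $\text{fat}_\epsilon(\Cal{G})\le 1+\lfloor V/(2\epsilon)\rfloor$ with $V=2$ immediately. Your direct combinatorial argument via unimodality is essentially a specialization of the \emph{proof} of that general bounded-variation theorem: the key inequality $f(z_k)\ge\min(f(z_{k-1}),f(z_{k+1}))$ and the telescoping of the induced constraints on the witness levels $r_i$ are precisely what the BV argument reduces to when every function is unimodal. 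Your route is self-contained and elementary, but longer, and the boundary cases $p\in\{1,n\}$ do require the separate check you allude to (there the sum $(r_p-r_1)+(r_p-r_n)$ has one vanishing term, so only a single $1-2\epsilon$ upper bound is available, giving $n\le 1+1/(2\epsilon)$ rather than $n\le 1/\epsilon$; this still implies the stated bound). The paper's route is cleaner and makes transparent that only the total-variation bound matters, not the specific Gaussian shape. Both proofs then finish the covering-number estimate by invoking the same Mendelson-type conversion from fat-shattering to $L^2$ entropy.
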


\begin{pf}
Since\vspace*{1pt} $\int^\infty_{-\infty}\llvert \frac{\mathrm{d}g}{\mathrm{d}y}\rrvert  \,\mathrm{d}y=2<\infty$ for all
$g\in\Cal{G}$ then $\Cal{G}\subset \mathit{BV}(\bb{R})$ where $\mathit{BV}(\bb{R})$ is
the space
of functions of bounded variation on $\bb{R}$. Therefore, by
Anthony and Bartlett \cite{Anthony-99}, Theorem~11.12, we obtain $\mathrm{fat}_{\epsilon}(\Cal{G})\le
1+\lfloor\epsilon^{-1}\rfloor$ and Mendelson \cite{Mendelson-02}, Theorem~3.2, ensures
that there exists a universal constant $c^\prime$ such that for every empirical
measure $\bb{P}_n$, and every $\epsilon>0$,
\[
\log\Cal{N}\bigl(\Cal{G},L^2(\bb{P}_n),\epsilon\bigr)\le
c^\prime\mathrm{fat}_{{\epsilon}/{8}}(\Cal{G})\log^2 \biggl(
\frac{2
\mathrm{fat}_
{{\epsilon}/{8}}(\Cal{G})}{\epsilon} \biggr)\le c^\prime \biggl(1+\frac{8}{\epsilon}
\biggr)\log^2 \biggl(\frac{2}{\epsilon
}+\frac{16
}{\epsilon^2} \biggr),
\]
thereby yielding the result.
\end{pf}

\begin{pf*}{Proof of Theorem~\ref{thmm:uclt}}
(a) Define
\[
\Cal{F}_i:= \bigl\{\mathrm{e}^{-\sigma_i(\cdot-x_i)^2} \dvt \sigma_i\in
(0,\infty), x_i\in\bb{R} \bigr\},\qquad i=1,\ldots,d.
\]
By
Lemma~\ref{lem:fatshatter}, it is easy to see that there exists
$N_i(\epsilon):=\Cal{N}(\Cal{F}_i,L^2(\bb{P}_n),\epsilon)$ functions
\[
\bigl\{\mathrm{e}^{-\sigma_{i,1}(\cdot-x_{i,1})^2},\ldots, \mathrm{e}^{-\sigma_{i,N_i(\epsilon)}(\cdot-x_{i,N_i(\epsilon)})^2} \bigr\} \subset\Cal{F}
_i
\]
such that for any $\epsilon>0$ and $f\in\Cal{F}_i$, there exists $\l\in
\{1,\ldots,N_i(\epsilon)\}$ such that
\[
\bigl\llVert f-\mathrm{e}^{-\sigma_{i,l}(\cdot-x_{i,l})^2} \bigr\rrVert _{L^2(\bb{P}_n)}\le \epsilon.
\]
Now pick $l_i\in\{1,\ldots,N_i(\epsilon)\}, i=1,\ldots,d$. Then
for $k(\cdot,x)=\mathrm{e}^{-\sigma\Vert\cdot-x\Vert^2}$, we have
\begin{eqnarray*}
\Biggl\llVert \mathrm{e}^{-\sigma\Vert
\cdot-x\Vert^2}-\prod^d_{i=1}\mathrm{e}^{-\sigma_{i,l_i}(\cdot
-x_{i,l_i})^2}
\Biggr\rrVert _{
L^2(\bb{P}_n)}&=&\Biggl\llVert \prod
^d_{i=1}\mathrm{e}^{-\sigma(\cdot-x_i)^2}-\prod
^d_{i=1}\mathrm{e}^{-\sigma
_{i,l_i}(\cdot-x_{
i,l_i} )^2}\Biggr\rrVert
_ {
L^2(\bb{P}_n)}
\\
&\le&\Biggl\llVert \sum
^d_{i=1}\bigl\llvert \mathrm{e}^{-\sigma(\cdot-x_i)^2}-\mathrm{e}^{-\sigma_{i,l_i}
(\cdot-x_{i,l_i})^2}
\bigr\rrvert \Biggr\rrVert _{L^2(\bb{P}_n)}
\\
&\le&\sum^d_{i=1}\bigl\llVert
\mathrm{e}^{-\sigma(\cdot-x_i)^2}-\mathrm{e}^{-\sigma_{i,l_i}
(\cdot-x_{i,l_i})^2}\bigr\rrVert _{L^2(\bb{P}_n)}\le\epsilon d.
\end{eqnarray*}
This implies $\Cal{N}(\Cal{K}_\Cal{X},L^2(\bb{P}_n),\epsilon
d)=\prod^d_{i=1}N_i(\epsilon)$ and, therefore,
\[
\log\Cal{N}\bigl(\Cal{K}_\Cal{X},L^2(
\bb{P}_n),\epsilon d\bigr)=\sum^d_{i=1}
\log N_i(\epsilon),
\]
which by Lemma~\ref{lem:fatshatter} yields
\[
\sup_n\sup_{\bb{P}_n}\log\Cal{N}\bigl(
\Cal{K}_\Cal{X},L^2(\bb{P}_n),\epsilon \bigr)
\le c^\prime d \biggl(1+\frac{8d}{\epsilon} \biggr)\log^2
\biggl(\frac{2d}{\epsilon}+\frac{16d^2}{
\epsilon^2} \biggr),\qquad 0<\epsilon\le1.
\]
It is easy to verify that $\int^\infty_0 \sup_n\sup_{\bb{P}_n}\log
\Cal{N}(\Cal{K}_\Cal{X},L^2(\bb{P}_n),\epsilon)<\infty$. Therefore,
$\Cal{K}_\Cal{X}$ is a universal Donsker class and the UCLTs
follow.

(b) Following the setting in (a) above, for
$k(\cdot,x)=\int^\infty_0
\mathrm{e}^{-\sigma\Vert\cdot-x\Vert^2_2} \,\mathrm{d}\Lambda(\sigma), \Lambda\in\Cal{M}_A$,
we
have
\[
k(\cdot,x)-\prod^d_{i=1}\mathrm{e}^{-\sigma_{i,l_i}(\cdot-x_{i,l_i
})^2}=
\int^\infty_0 \Biggl(\mathrm{e}^{-\sigma\Vert\cdot-x\Vert^2_2}-\prod
^d_{i=1}\mathrm{e}^{-\sigma_{i,l_i}
(\cdot-x_ {
i,l_i} )^2} \Biggr) \,\mathrm{d}
\Lambda(\sigma)
\]
and so
\[
\Biggl\llVert k(\cdot,x)-\prod^d_{i=1}\mathrm{e}^{-\sigma_{i,l_i}(\cdot-x_{i,l_i
})^2}
\Biggr\rrVert _{
L^2(\bb{P}_n)}\le
\int^\infty_0\Biggl\llVert \mathrm{e}^{-\sigma\Vert\cdot-x\Vert^2_2}-
\prod^d_{i=1}\mathrm{e}^{-\sigma_{i,l_i}
(\cdot-x_ {
i,l_i} )^2}\Biggr
\rrVert _ {
L^2(\bb{P}_n)} \,\mathrm{d}\Lambda(\sigma)
\le\epsilon d,
\]
and the claim as in (a) follows.

(c) The idea is similar to that of in (b) where for
$k(\cdot,x)=\prod^d_{i=1}\int^\infty_0
\mathrm{e}^{-\sigma(\cdot-x_i)^2} \,\mathrm{d}\Lambda_i(\sigma)$, $\Lambda_i\in\Cal{M}_{A_i}$,
we have
\begin{eqnarray*}
\Biggl\llVert k(\cdot,x)-\prod^d_{i=1}\mathrm{e}^{-\sigma_{i,l_i}
(\cdot-x_{i, l_i
})^2}
\Biggr\rrVert _{
L^2(\bb{P}_n)}&\le&\Biggl\llVert \sum
^d_{i=1}\biggl\llvert \int^\infty_0
\mathrm{e}^{-\sigma(\cdot-x_i)^2} \,\mathrm{d}\Lambda_i(\sigma)-\mathrm{e}^{-\sigma_{i,l_i}
(\cdot-x_{i, l_i})^2}\biggr\rrvert
\Biggr\rrVert _{L^2(\bb{P}_n)}
\\
&\le&\sum^d_{i=1}\biggl\llVert \int
^\infty_0 \mathrm{e}^{-\sigma(\cdot-x_i)^2} \,\mathrm{d}
\Lambda_i(\sigma)-\mathrm{e}^{-\sigma_{i,l_i}
(\cdot-x_{i, l_i})^2}\biggr\rrVert _{L^2(\bb{P}_n)}
\\
&\le&\sum^d_{i=1}
\int^\infty_0 \bigl\llVert \mathrm{e}^{-\sigma(\cdot-x_i)^2}-\mathrm{e}^{-\sigma_{i,l_i}
(\cdot-x_{i,
l_i})^2}
\bigr\rrVert _{L^2(\bb{P}_n)} \,\mathrm{d}\Lambda_i(\sigma)
\\
&\le&\epsilon d,
\end{eqnarray*}
and the claim as in (a) follows.

(d) From (\ref{Eq:matern-equiv}), we have
\[
k(x,y)=\frac{(c^2/4)^{\beta-{d}/{2}}}{\Gamma(\beta-{d}/{2})} \int^\infty_0
\mathrm{e}^{-\sigma\Vert x-y\Vert^2_2} \sigma^{{d}/{2}-
\beta-1}\mathrm{e}^{-{c^2}/{(4\sigma)}} \,\mathrm{d}\sigma,
\]
which is of the form in (b) where
$\mathrm{d}\Lambda(\sigma)=\frac{(c^2/4)^{\beta-{d}/{2}}}{\Gamma(\beta-{d}/{2})}
\sigma^{{d}/{2}-\beta-1}\mathrm{e}^{-{c^2}/{(4\sigma)}} \,\mathrm{d}\sigma$ and the result
follows from (b).
\end{pf*}

%
\begin{appendix}\label{app}
\section{Supplementary results}
In the following, we present supplementary results that are used in the proofs
of Theorems \ref{Thm:consistency} and \ref{Thm:estim-to-p}.
Before we present a result to bound $U_n(\Cal{K};(X_i)^n_{i=1})$, we
need the
following lemma. We refer the reader to de la Pe{\~{n}}a and
Gin{\'{e}} (\cite{delaPena-99}, Proposition~4.3.1 and equation
5.1.9) for generalized versions of this result. However,
here, we
provide a
bound with explicit constants.

\begin{appxlem}\label{lem:massart}
Let $\Cal{A}$ be a finite subset of $\bb{R}^{{l(l-1)}/{2}}$ and
$(\varepsilon_i)^l_{i=1}$ be independent Rademacher variables. For any
$a\in\Cal{A}$, define $a:=(a_{ij})_{1\le i<j\le n}$. Suppose
$\sup_{a\in\Cal{A}}\Vert a\Vert_2\le R<\infty$, then for any $0<\theta<1$,
%
\begin{equation}
\bb{E}\sup_{a\in\Cal{A}}\Biggl\llvert \sum
^l_{i<j} \varepsilon_i
\varepsilon_ja_ { ij } \Biggr\rrvert \le \frac{\mathrm{e}R}{\theta}
\log\frac{|\Cal{A}|}{1-\theta}\label{Eq:theta}
\end{equation}
and, therefore,
%
\begin{equation}
\bb{E}\sup_{a\in\Cal{A}}\Biggl\llvert \sum
^l_{i<j} \varepsilon_i
\varepsilon_ja_ { ij } \Biggr\rrvert < \mathrm{e}R \bigl(1+\sqrt{\log|
\Cal{A}|} \bigr)^2.\label{Eq:theta-1}
\end{equation}
\end{appxlem}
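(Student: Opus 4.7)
The plan is to derive both estimates from a single moment-generating-function bound for each individual degree-2 Rademacher chaos $X_a := \sum_{i<j}\varepsilon_i\varepsilon_j a_{ij}$, combined with a Chernoff--Jensen argument, a trivial union bound over $\Cal{A}$, and a final optimization in the free parameter.

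First I will control the moments of each $X_a$. Since $\bb{E}X_a^2 = \sum_{i<j}a_{ij}^2 = \|a\|_2^2 \le R^2$, Bonami's hypercontractivity inequality for a degree-two Rademacher chaos (see e.g.~\cite{delaPena-99}, Chapter~3) gives $\|X_a\|_p \le (p-1)\|X_a\|_2 \le (p-1)R$ for every integer $p \ge 2$, and trivially $\bb{E}|X_a| \le R$, so $\bb{E}|X_a|^p \le p^p R^p$ for all $p\ge 1$. Combining this with Stirling's weak bound $p! \ge e(p/e)^p$ (equivalently $p^p/p! \le e^{p-1}$), I expand
$$\bb{E}\exp(\lambda|X_a|) \le 1 + \sum_{p\ge 1}\frac{\lambda^p p^p R^p}{p!} \le 1 + e^{-1}\sum_{p\ge 1}(e\lambda R)^p = \frac{1-(e-1)\lambda R}{1-e\lambda R} \le \frac{1}{1-e\lambda R}$$
for every $0<\lambda<(eR)^{-1}$.

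Next I apply Jensen's inequality to $x\mapsto e^{\lambda x}$ together with the union bound,
$$\exp\bigl(\lambda\,\bb{E}\sup_{a\in\Cal{A}}|X_a|\bigr) \le \bb{E}\sup_{a}\exp(\lambda|X_a|) \le \sum_{a\in\Cal{A}}\bb{E}\exp(\lambda|X_a|) \le \frac{|\Cal{A}|}{1-e\lambda R},$$
so $\bb{E}\sup_a|X_a| \le \lambda^{-1}\log\bigl(|\Cal{A}|/(1-e\lambda R)\bigr)$. Substituting $\lambda = \theta/(eR)$ for $\theta \in (0,1)$ produces (\ref{Eq:theta}) directly. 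For (\ref{Eq:theta-1}), I will choose $\theta = \sqrt{\log|\Cal{A}|}/(1+\sqrt{\log|\Cal{A}|})$; then $1/\theta = 1+1/\sqrt{\log|\Cal{A}|}$ and $1-\theta = 1/(1+\sqrt{\log|\Cal{A}|})$, and applying the strict inequality $\log(1+x)<x$ (for $x>0$) to $\log(1+\sqrt{\log|\Cal{A}|})$ collapses the product $\frac{eR}{\theta}\log\frac{|\Cal{A}|}{1-\theta}$ to the stated $eR(1+\sqrt{\log|\Cal{A}|})^2$ (the edge case $|\Cal{A}|=1$ being handled by the trivial bound $\bb{E}|X_a|\le R<eR$).

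The only non-routine ingredient is the Bonami hypercontractivity bound used in the moment step; the rest is textbook. An alternative Step~1 via Hanson--Wright (viewing $X_a=\varepsilon^{\top}A\varepsilon$ with $\|A\|_F\le R$) would yield an essentially equivalent sub-exponential MGF estimate, but the Bonami route produces the cleanest constants and, in particular, explains the appearance of $e$ in the prefactor $eR/\theta$, which enters through Stirling's inequality in Step~1.
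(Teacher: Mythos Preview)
Your proof is correct and follows essentially the same route as the paper's: Jensen plus a union bound to reduce to a single-chaos MGF, Bonami hypercontractivity for the moment bound $\bb{E}|X_a|^p\le p^pR^p$, Stirling to sum the exponential series to $(1-e\lambda R)^{-1}$, the substitution $\lambda=\theta/(eR)$, and the same choice $\theta=\sqrt{\log|\Cal{A}|}/(1+\sqrt{\log|\Cal{A}|})$ together with $-\log(1-\theta)<\theta/(1-\theta)$ (equivalently your $\log(1+x)<x$) for the second inequality. Your explicit handling of the $|\Cal{A}|=1$ edge case is a nice touch the paper omits.
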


\begin{pf}
For $\lambda>0$, consider
\begin{eqnarray*}
\mathrm{e}^{\lambda
\bb{E}\sup_{a\in\Cal{A}}\llvert \sum^l_{i<j}\varepsilon_i\varepsilon_ja_{ij}
\rrvert }&\le& \bb{E} \mathrm{e}^{\lambda\sup_{a\in\Cal{A}}\llvert \sum^l_{i<j}
\varepsilon_i\varepsilon_ja_{ij}
\rrvert }= \bb{E}\sup
_{a\in\Cal{A}}\mathrm{e}^{\lambda\llvert \sum^l_{i<j}
\varepsilon_i\varepsilon_j
a_{ij}\rrvert }
\\
& \le& \sum_{a\in\Cal{A}}\bb{E} \mathrm{e}^{\lambda\llvert \sum^l_{i<j}
\varepsilon_i\varepsilon_ja_{ij}
\rrvert }=\sum
_{a\in\Cal{A}}\bb{E}\sum^\infty_{c=0}
\frac{\lambda^c\llvert \sum^l_{
i<j}
\varepsilon_i\varepsilon_ja_{ij}
\rrvert ^c}{c !}.
\end{eqnarray*}
By the hypercontractivity of homogeneous Rademacher chaos of
degree 2 (de la Pe{\~{n}}a and
Gin{\'{e}}~\cite{delaPena-99}, Theorem~3.2.2), we have
\[
\bb{E}\Biggl\llvert \sum^l_{i<j}
\varepsilon_i\varepsilon_ja_{ij} \Biggr\rrvert
^c\le(c-1)^c \Biggl(\bb{E}\Biggl\llvert \sum
^l_{i<j} \varepsilon_i
\varepsilon_ja_{ij}\Biggr\rrvert ^2
\Biggr)^{c/2}\le (c-1)^c \Biggl(\sum
^l_{i<j}a^2_{ij}
\Biggr)^{c/2},\qquad c\ge2
\]
and
\[
\bb{E}\Biggl\llvert \sum^l_{i<j}
\varepsilon_i\varepsilon_ja_{ij} \Biggr\rrvert
\le \Biggl(\bb{E}\Biggl\llvert \sum^l_{i<j}
\varepsilon_i\varepsilon_ja_{ij} \Biggr\rrvert
^2 \Biggr)^{1/2}\le \Biggl(\sum
^l_{i<j}a^2_{ij}
\Biggr)^{1/2},
\]
which implies
\[
\mathrm{e}^{\lambda
\bb{E}\sup_{a\in\Cal{A}}\llvert \sum^l_{i<j}\varepsilon_i\varepsilon_ja_{ij}
\rrvert }\le\sum_{a\in\Cal{A}}\sum
^\infty_{c=0}\frac{\lambda^c
c^c\Vert a\Vert^{c}_2}{c !}.
\]
Using $c^c/c !\le \mathrm{e}^{c}$ and choosing $\lambda=\frac{\theta}{\mathrm{e}R}$ for some
$0<\theta<1$, we obtain the desired result in (\ref{Eq:theta}). Using
$-\log(1-\theta)<\theta/(1-\theta)$ for $0<\theta<1$ in (\ref
{Eq:theta}) and
taking infimum over $\theta\in(0,1)$ (where the infimum is
obtained at $\theta=\sqrt{\log|\Cal{A}|}/(1+\sqrt{\log|\Cal{A}|})$) yields
(\ref{Eq:theta-1}).
\end{pf}

The following result is based on the standard chaining argument to
obtain a bound on the expected suprema of the Rademacher chaos process
of degree
2. While the reader can to refer to de la Pe{\~{n}}a and
Gin{\'{e}} \cite{delaPena-99}, Corollary~5.18, for a
general result
to bound the expected suprema of the Rademacher chaos process of degree
$m$, we
present a bound with explicit constants and with the lower limit of the entropy
integral away from zero. This allows one to handle classes whose
entropy number
grows polynomially (for $\beta\ge1$ in Theorem~\ref{Thm:consistency}) in
contrast to the entropy integral bound in de la Pe{\~{n}}a and
Gin{\'{e}} \cite{delaPena-99}, Equation 5.1.22,
where the
integral diverges to infinity. Similar modification to the Dudley entropy
integral bound on the expected suprema of empirical processes is
carried out in
Mendelson \cite{Mendelson-02}.

\begin{appxlem}\label{lem:chaining}
Suppose $\Cal{G}$ is a class of real-valued functions on
$\Cal{X}\times\Cal{X}$ and
$(\varepsilon_i)^n_{i=1}$ be a independent Rademacher variables. Define
$\beta:=\sup_{g_1,g_2\in\Cal{G}}\rho(g_1,g_2)$. Then, for any
$(x_i)^n_{i=1}\subset\Cal{X}$ and $0<\theta<1$,
\begin{eqnarray*}
\bb{E}\sup_{g\in\Cal{G}}\Biggl\llvert \sum
^n_{i<j} \varepsilon_i
\varepsilon_jg(x_i,x_j)\Biggr\rrvert
\le 2\sqrt{2} n^2 \biggl(\inf_{\alpha>0} \biggl\{
\alpha+\frac{3\mathrm{e}}{
\theta}\int^{\beta}_{\alpha}
\frac{1}{n}\log\frac{
\Cal{ N } (\Cal{ G }, \rho
,\epsilon)}{\sqrt{1-\theta}} \,\mathrm{d}\epsilon \biggr\} \biggr) +\frac{n}{\sqrt{2}}\sup_{g\in\Cal{G}}\rho(g,0),
\end{eqnarray*}
where for any
$g_1,g_2\in\Cal{G}$, $\rho(g_1,g_2)=\sqrt{\frac{2}{n^2}\sum^n_{i<j}
 (g_1(x_i, x_j)-g_2(x_i,x_j) )^2 }$ and therefore
\begin{eqnarray*}
\bb{E}\sup_{g\in\Cal{G}}\Biggl\llvert \sum
^n_{i<j} \varepsilon_i
\varepsilon_jg(x_i,x_j)\Biggr\rrvert &<&
2\sqrt{2} n^2 \biggl(\inf_{\alpha>0} \biggl\{\alpha+
\frac{3\mathrm{e}}{n} \int^{\beta}_{\alpha} \bigl(1+\sqrt{
\log \Cal{ N } (\Cal{ G }, \rho,\epsilon)} \bigr)^2 \,\mathrm{d}\epsilon \biggr\}
\biggr)
\\
& &{} +\frac{n}{\sqrt{2}}\sup_{g\in\Cal{G}}\rho(g,0).
\end{eqnarray*}
\end{appxlem}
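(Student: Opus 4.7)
The plan is to run a standard generic chaining argument adapted to homogeneous Rademacher chaos of degree $2$, using Lemma A.1 as the subgaussian-type tail/expectation tool at each level of the chain, and truncating the chain at a level $\alpha>0$ rather than sending it to zero, so as to accommodate classes $\mathcal{G}$ whose covering numbers grow polynomially.

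First, fix $\alpha>0$ and let $\beta:=\sup_{g_1,g_2\in\mathcal{G}}\rho(g_1,g_2)$. For each integer $k\ge 0$ set $\epsilon_k:=\beta\,2^{-k}$, and choose a minimal $\epsilon_k$-net $\mathcal{G}_k\subset\mathcal{G}$ of cardinality $\mathcal{N}(\mathcal{G},\rho,\epsilon_k)$, together with approximation maps $\pi_k:\mathcal{G}\to\mathcal{G}_k$ with $\rho(g,\pi_k(g))\le\epsilon_k$. Let $N_\alpha$ be the first index with $\epsilon_{N_\alpha}\le\alpha$. For every $g\in\mathcal{G}$ write the telescoping identity
\begin{equation*}
\sum_{i<j}\varepsilon_i\varepsilon_j g(x_i,x_j)=\sum_{i<j}\varepsilon_i\varepsilon_j\pi_0(g)(x_i,x_j)+\sum_{k=1}^{N_\alpha}\sum_{i<j}\varepsilon_i\varepsilon_j\bigl(\pi_k(g)-\pi_{k-1}(g)\bigr)(x_i,x_j)+\sum_{i<j}\varepsilon_i\varepsilon_j\bigl(g-\pi_{N_\alpha}(g)\bigr)(x_i,x_j).
\end{equation*}
Taking suprema over $g\in\mathcal{G}$ and expectations, the three terms on the right are handled separately.

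Second, for the initial term, a single application of Jensen's inequality and the identity $\mathbb{E}\bigl(\sum_{i<j}\varepsilon_i\varepsilon_j\pi_0(g)(x_i,x_j)\bigr)^2=\sum_{i<j}\pi_0(g)(x_i,x_j)^2=\tfrac{n^2}{2}\rho(\pi_0(g),0)^2$ gives the bound $\tfrac{n}{\sqrt{2}}\sup_{g\in\mathcal{G}}\rho(g,0)$ (after absorbing the $\pi_0$ into $g$ via the triangle inequality, which only costs $\beta$). For each chaining level $k\ge 1$, the differences $\pi_k(g)-\pi_{k-1}(g)$ range over a set of cardinality at most $\mathcal{N}(\mathcal{G},\rho,\epsilon_k)\mathcal{N}(\mathcal{G},\rho,\epsilon_{k-1})\le\mathcal{N}(\mathcal{G},\rho,\epsilon_k)^2$, and each element has $\ell^2$-norm (viewed as a vector indexed by pairs $i<j$) equal to $\tfrac{n}{\sqrt{2}}\rho(\pi_k(g),\pi_{k-1}(g))\le\tfrac{n}{\sqrt{2}}(\epsilon_k+\epsilon_{k-1})=\tfrac{3n}{\sqrt{2}}\epsilon_k$. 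Applying Lemma A.1 with $R=\tfrac{3n}{\sqrt{2}}\epsilon_k$ and the given $\theta$ yields
\begin{equation*}
\mathbb{E}\sup_{g\in\mathcal{G}}\Bigl|\sum_{i<j}\varepsilon_i\varepsilon_j\bigl(\pi_k(g)-\pi_{k-1}(g)\bigr)(x_i,x_j)\Bigr|\le\frac{3 e\,n\,\epsilon_k}{\sqrt{2}\,\theta}\log\frac{\mathcal{N}(\mathcal{G},\rho,\epsilon_k)^2}{1-\theta}.
\end{equation*}
Summing over $k=1,\ldots,N_\alpha$ and converting the resulting Riemann sum $\sum_k\epsilon_k\log\mathcal{N}(\cdot,\epsilon_k)^2/(1-\theta)$ into the entropy integral $\int_{\alpha}^{\beta}\log\bigl(\mathcal{N}(\mathcal{G},\rho,\epsilon)/\sqrt{1-\theta}\bigr)\,d\epsilon$ via a standard monotonicity comparison (with the extra factor $2\sqrt{2}$ absorbing $2\cdot\tfrac{3}{\sqrt{2}}$ together with the $1/n$ normalisation) gives the second term of the claimed bound.

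Third, for the residual term $g-\pi_{N_\alpha}(g)$, the $\ell^2$-norm is at most $\tfrac{n}{\sqrt{2}}\alpha$, but since we cannot reduce to a finite net there, I use Jensen directly on the residual, yielding a contribution of order $n\alpha$, which after the overall $\tfrac{1}{n}$-scaling (already present in the $\rho$ normalisation) becomes the $\alpha$-term inside the infimum. Taking the infimum over $\alpha>0$ gives the first statement. Finally, the second statement follows by invoking the sharper form \eqref{Eq:theta-1} of Lemma A.1 in place of \eqref{Eq:theta} at each chaining level and noting $\log(\mathcal{N}^2/(1-\theta))$ is replaced by $(1+\sqrt{\log\mathcal{N}})^2$ up to constants.

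The main obstacle is bookkeeping the constants carefully: one must check that the factor $3e/\theta$ coming from Lemma A.1, combined with the $\tfrac{3}{\sqrt{2}}\epsilon_k$ from the triangle inequality and the conversion of the dyadic sum into an integral (which costs another factor of $2$ from $\epsilon_{k-1}-\epsilon_k=\epsilon_k$), produces exactly the $2\sqrt{2}\cdot\tfrac{3e}{\theta}$ coefficient in front of the entropy integral, and that the residual $\alpha$-term indeed emerges without an additional $n$-factor once one divides out $n$ inside the integrand as written. The remainder is routine.
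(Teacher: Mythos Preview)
Your approach is essentially the paper's: dyadic telescoping, Lemma~A.1 at each level with $R=\tfrac{3n}{\sqrt{2}}\epsilon_k$ and cardinality $\le \Cal{N}(\Cal{G},\rho,\epsilon_k)^2$, then conversion of the sum to an entropy integral. The bookkeeping you sketch for the chaining levels reproduces the constant $2\sqrt{2}\cdot\tfrac{3e}{\theta}$ correctly.

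There is, however, a genuine slip in your treatment of the residual term $g-\pi_{N_\alpha}(g)$. You write that ``Jensen directly on the residual'' yields a contribution of order $n\alpha$. Two things go wrong here. First, Jensen in the form $\bb{E}|X|\le(\bb{E}X^2)^{1/2}$ gives the bound $\tfrac{n}{\sqrt{2}}\alpha$ only for a \emph{single fixed} $g$; it cannot be applied after taking a supremum over the uncountable class $\Cal{G}$, and there is no finite net at this stage to fall back on. Second, the correct order is $n^2\alpha$, not $n\alpha$. The paper handles this by a deterministic (pointwise in $\varepsilon$) Cauchy--Schwarz bound:
\[
\Bigl|\sum_{i<j}\varepsilon_i\varepsilon_j\bigl(g-\pi_{N_\alpha}(g)\bigr)(x_i,x_j)\Bigr|
\le \sqrt{\binom{n}{2}}\cdot\frac{n}{\sqrt{2}}\,\rho\bigl(g,\pi_{N_\alpha}(g)\bigr)
\le \frac{n^2}{\sqrt{2}}\,\epsilon_{N_\alpha},
\]
which holds uniformly in $g$ and $\varepsilon$, so the supremum and expectation are harmless. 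With the paper's choice $M=\sup\{l:\delta_l>2\alpha\}$ one gets $\delta_M\le 4\alpha$ and hence the residual contributes $\tfrac{n^2}{\sqrt{2}}\cdot 4\alpha=2\sqrt{2}\,n^2\alpha$, which is exactly the $\alpha$ inside the $2\sqrt{2}\,n^2(\cdot)$ bracket. Once you replace your Jensen step by this Cauchy--Schwarz argument, the rest of your proposal goes through and matches the paper.
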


\begin{pf}
Let $\delta_0:=\sup_{g_1,g_2\in\Cal{G}}\rho(g_1,g_2)$ and for any $l\in
\bb{N}$, let $\delta_l:=2^{-l}\delta_0$. For each $l\in\bb{N}\cup\{0\}
$, let
$\Cal{G}_l:=\{g^1_l,\ldots,g^{\Cal{N}(\Cal{G},\rho,\delta_l)}_l\}$ be a
$\rho$-cover of $\Cal{G}$ at scale $\delta_l$. For any $M$, any $g\in
\Cal{G}$
can be expressed as
\[
g=(g-g_M)+\sum^M_{l=1}(g_{l}-g_{l-1})+g_0,
\]
where $g_l\in\Cal{G}_l$ and $\Cal{G}_0:=\Cal{G}$. Note that
$\rho(g_l,g_{l-1})\le\rho(g,g_l)+\rho(g,g_{l-1})\le
\delta_l+\delta_{l-1}=3\delta_l$. Consider
%
\begin{eqnarray}
\label{Eq:chain}&&\bb{E}\sup_{g\in\Cal{G}}\Biggl\llvert \sum
^n_{i<j} \varepsilon_i
\varepsilon_jg(x_i,x_j)\Biggr\rrvert \nonumber\\
&&\quad\le
\bb{E}\sup_{g\in\Cal{G}}\Biggl\llvert \sum
^n_{i<j}\varepsilon_i
\varepsilon_j\bigl(g(x_i, x_j)-g_M(x_i,x_j)
\bigr)\Biggr\rrvert +\bb{E} \Biggl\llvert \sum^n_{i<j}
\varepsilon_i\varepsilon_jg_0(x_i,x_j)
\Biggr\rrvert\nonumber
\\
& &\qquad{} +\sum^M_{l=1}\bb{E}\mathop{\sup
_{g_l\in\Cal{G}_l,g_{l-1}\in
\Cal{G}_{l-1}}}_{ \rho(g_l, g_ { l-1 } )\le3\delta_l} \Biggl\llvert \sum
^n_ {
i<j } \varepsilon_i
\varepsilon_j\bigl(g_l(x_i,
x_j)-g_{l-1}(x_i,x_j)\bigr)\Biggr
\rrvert .
\end{eqnarray}
Note that
%
\begin{eqnarray}
\label{Eq:1} \bb{E}\sup_{g\in\Cal{G}}\Biggl\llvert \sum
^n_{i<j}\varepsilon_i
\varepsilon_j\bigl(g(x_i, x_j)-g_M(x_i,x_j)
\bigr)\Biggr\rrvert &\le &\bb{E}\sum^n_{j=1}
\varepsilon^2_j\sup_{g\in\Cal{G}}\sqrt
{\sum^n_{i<j}
\bigl(g(x_i,x_j )-g_M(x_i,x_j)
\bigr)^2}
\nonumber
\\[-8pt]
\\[-8pt]
\nonumber
&=& \frac{n^2}{\sqrt{2}}\sup_{g\in\Cal{G}}\rho(g,g_M)
\le \frac{n^2\delta_M}{\sqrt{2}},
\\
\label{Eq:2} \bb{E} \Biggl\llvert \sum^n_{i<j}
\varepsilon_i\varepsilon_jg_0(x_i,x_j)
\Biggr\rrvert &\le& \Biggl(\bb{E} \Biggl\llvert \sum
^n_{i<j}\varepsilon_i
\varepsilon_jg_0(x_i,x_j)\Biggr
\rrvert ^2 \Biggr)^{1/2}
\nonumber
\\[-8pt]
\\[-8pt]
\nonumber
&\le& 
 \Biggl(\sum^n_{i<j}g^2_0(x_i,x_j)
\Biggr)^{1/2}= \frac{n}{\sqrt{2}}\sup_{g\in\Cal{G}}
\rho(g,0),
\end{eqnarray}
and by Lemma~\ref{lem:massart},
%
\begin{eqnarray}
\label{Eq:3} &&\bb{E}\mathop{\sup_{g_l\in\Cal{G}_l,g_{l-1}\in\Cal{G}_{
l-1}}}_{ \rho(g_l, g_{l-1 } )\le3\delta_l}
\Biggl\llvert \sum^n_ { i<j }
\varepsilon_i\varepsilon_j\bigl(g_l(x_i,
x_j)-g_{l-1}(x_i,x_j)\bigr)\Biggr
\rrvert \nonumber\\
&&\quad\le \frac{3\mathrm{e}\delta_ln}{\theta\sqrt{2}}\log\frac{\Cal{N}(\Cal{G},\rho,\delta
_l)\Cal{N
}(\Cal{G},\rho,\delta_{l-1})}{1-\theta}
\nonumber\\
&&\quad\le \frac{6\mathrm{e}\delta_ln}{\theta\sqrt{2}}\log\frac{\Cal{N}(\Cal
{G},\rho,
\delta_l)}{\sqrt{1-\theta}}
\end{eqnarray}
for any $0<\theta<1$.
Using (\ref{Eq:1})--(\ref{Eq:3}) in (\ref{Eq:chain}), we have
%
\begin{eqnarray}
\label{Eq:4}&& \bb{E}\sup_{g\in\Cal{G}}\Biggl\llvert \sum
^n_{i<j} \varepsilon_i
\varepsilon_jg(x_i,x_j)\Biggr\rrvert \nonumber\\
&&\quad\le
\frac{n^2\delta_M}{\sqrt{2}}+ \frac{n}{\sqrt{2}}\sup_{g\in\Cal{G}}
\rho(g,0)+\frac{6\mathrm{e}n}{
\theta\sqrt{2}}\sum^M_{l=1}
\delta_l\log\frac{\Cal{N}(\Cal{G},\rho,
\delta_l)}{\sqrt{1-\theta}}
\nonumber
\\[-8pt]
\\[-8pt]
\nonumber
&&\quad\le\frac{n^2\delta_M}{\sqrt{2}}+ \frac{n}{\sqrt{2}}\sup_{g\in\Cal{G}}
\rho(g,0)+\frac{12\mathrm{e}n}{
\theta\sqrt{2}}\sum^M_{l=1}(
\delta_l-\delta_{l+1})\log\frac{\Cal{N}(\Cal
{G},\rho
,\delta_l)}{\sqrt{1-\theta}}
\\
&&\quad\le\frac{n^2\delta_M}{\sqrt{2}}+ \frac{n}{\sqrt{2}}\sup_{g\in\Cal{G}}
\rho(g,0)+\frac{12\mathrm{e}n}{
\theta\sqrt{2}}\int^{\delta_0}_{\delta_{M+1}}\log
\frac{
\Cal{ N } (\Cal{ G }, \rho
,\epsilon)}{\sqrt{1-\theta}} \,\mathrm{d}\epsilon.\nonumber
\end{eqnarray}
For any $\alpha>0$, pick $M:=\sup\{l\dvt \delta_l>2\alpha\}$. This means
$\delta_{M+1}\le2\alpha$ and, therefore, $\delta_M=2\delta_{M+1}\le
4\alpha$. On
the other hand, $\delta_{M+1}>\alpha$ since $\delta_M>2\alpha$. Using these
bounds in (\ref{Eq:4}), we obtain
%
\begin{eqnarray}
\bb{E}\sup_{g\in\Cal{G}}\Biggl\llvert \sum
^n_{i<j} \varepsilon_i
\varepsilon_jg(x_i,x_j)\Biggr\rrvert &
\le& 2\sqrt{2} n^2\alpha+\frac{6\sqrt{2}\mathrm{e}n}{
\theta}\int
^{\sup_{g_1,g_2\in\Cal{G}}\rho(g_1,g_2)}_{\alpha} \log\frac{
\Cal{ N } (\Cal{ G }, \rho
,\epsilon)}{\sqrt{1-\theta}} \,\mathrm{d}\epsilon
\nonumber
\\[-8pt]
\\[-8pt]
\nonumber
& &{} +\frac{n}{\sqrt{2}}\sup_{g\in\Cal{G}}\rho(g,0).
\end{eqnarray}
Since $\alpha$ is arbitrary, taking infimum over $\alpha>0$ yields the result.
\end{pf}

%

\section{Bound on
\texorpdfstring{$\Cal{N}(\Cal{K}_\alpha,\rho_\alpha,\epsilon)$}{$N(K_alpha,rho_alpha,epsilon)$} in
Theorem~\texorpdfstring{\protect\ref{thmm:examples}}{4.2}(a)}
The following result presents a bound on
$\Cal{N}(\Cal{K}_\alpha,\rho_\alpha,\epsilon)$ when
\[
\Cal{K}= \bigl\{\mathrm{e}^{-\sigma\Vert x-y\Vert^2_2}, x,y\in (a_0,b_0)^d,
-\infty<a_0<b_0<\infty: \sigma\in(0,a] \bigr\},
\]
using which it is easy to check that $\omega_\star<1$ in
Theorem~\ref{Thm:estim-to-p} and, therefore, the claims shown in
Theorem~\ref{thmm:examples} follow.

\begin{appxpro}\label{Pro:covering}
Define
$\Sigma:=(0,a]$ and $\Cal{K}_\alpha:=\{\partial^{\alpha,\alpha}
\psi_\sigma(x-y), x,y\in
(a_0,b_0)^d, -\infty<a_0<b_0<\infty: \sigma\in\Sigma\}$, where
$\psi_\sigma(x)=\mathrm{e}^{-\sigma\Vert x-y\Vert^2_2}$ and $|\alpha|=r$. Then
\[
\Cal{N}(\Cal{K}_\alpha,\rho_\alpha,\epsilon)=
\frac{C}{\epsilon},
\]
where $\rho_\alpha$ is defined in Theorem~\ref{Thm:estim-to-p} and $C$
is a
constant that depends on $a$, $a_0$, $b_0$, $d$ and $r$.
\end{appxpro}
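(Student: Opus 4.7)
The approach is to exploit that $\Cal{K}_\alpha$ is a one-parameter family indexed by $\sigma\in(0,a]$, and to show that the map $\sigma\mapsto\partial^{\alpha,\alpha}\psi_\sigma(x-y)$ is uniformly Lipschitz in $\sigma$ over the bounded domain $x,y\in(a_0,b_0)^d$. Given such a Lipschitz bound, covering $(0,a]$ by a one-dimensional $\epsilon$-net automatically yields a $\rho_\alpha$-cover of $\Cal{K}_\alpha$ of cardinality $O(1/\epsilon)$, which is the desired conclusion.

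First I would invoke the explicit expansion (\ref{Eq:partial-psi}), which writes $\partial^{\alpha,\alpha}\psi_\sigma(u)$ as a finite sum of terms of the form $c_j\,\sigma^{|\alpha|+|j|}\prod_{i=1}^d u_i^{2j_i}e^{-\sigma\|u\|^2_2}$, where $u=x-y$ ranges over the bounded box $D:=(a_0-b_0,b_0-a_0)^d$. Differentiating each term in $\sigma$ produces new terms in which either the polynomial exponent of $\sigma$ is shifted by one or an extra factor of $-\|u\|^2_2$ appears from hitting the exponential. Since $\sigma\le a$, $|u_i|\le b_0-a_0$ and $e^{-\sigma\|u\|^2_2}\le 1$, each such term is uniformly bounded, yielding a finite constant $L=L(a,a_0,b_0,d,r)$ with $\sup_{\sigma\in(0,a],\,u\in D}|\tfrac{d}{d\sigma}\partial^{\alpha,\alpha}\psi_\sigma(u)|\le L$. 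The mean value theorem then gives $|\partial^{\alpha,\alpha}\psi_{\sigma_1}(u)-\partial^{\alpha,\alpha}\psi_{\sigma_2}(u)|\le L|\sigma_1-\sigma_2|$ for every $u\in D$, and since any sampled pair satisfies $X_i-X_j\in D$, this pointwise estimate implies $\rho_\alpha(k_{\sigma_1},k_{\sigma_2})\le \sqrt{\tfrac{2}{n^2}\binom{n}{2}}\,L|\sigma_1-\sigma_2|\le L|\sigma_1-\sigma_2|$. Finally, an $(\epsilon/L)$-net of $(0,a]$ has at most $\lceil aL/\epsilon\rceil$ points and its image under $\sigma\mapsto k_\sigma$ is an $\epsilon$-cover of $\Cal{K}_\alpha$ in $\rho_\alpha$, so $\Cal{N}(\Cal{K}_\alpha,\rho_\alpha,\epsilon)\le C/\epsilon$ for a constant $C$ depending only on $a$, $a_0$, $b_0$, $d$ and $r$.

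The main obstacle is purely bookkeeping: the number of terms in (\ref{Eq:partial-psi}) grows with $|\alpha|=r$, and so does the effective Lipschitz constant $L$. One should verify carefully that differentiating in $\sigma$ never generates a singular contribution; this is automatic here because the expansion involves only non-negative powers of $\sigma$ (and $\sigma\le a$ is bounded above, not away from zero, which is what would have been problematic). The factor $\tfrac{2}{n^2}\sum_{i<j}$ in the definition of $\rho_\alpha$ also causes no trouble since $\tfrac{2}{n^2}\binom{n}{2}\le 1$, so the Lipschitz estimate passes cleanly to $\rho_\alpha$. No other subtleties arise, and the resulting $O(1/\epsilon)$ covering bound matches the intuition that, on the bounded domain $(a_0,b_0)^d$, $\Cal{K}_\alpha$ is essentially a one-dimensional bounded-Lipschitz curve parametrized by $\sigma$.
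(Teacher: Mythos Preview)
Your proposal is correct and follows essentially the same route as the paper: cover the one-dimensional parameter interval $(0,a]$ and transfer this to a $\rho_\alpha$-cover of $\Cal{K}_\alpha$ via a uniform Lipschitz bound in $\sigma$. The only cosmetic difference is that you differentiate each term of (\ref{Eq:partial-psi}) in $\sigma$ directly, whereas the paper splits $\sigma^{r+\sum_i j_i}e^{-\sigma\Vert x-y\Vert^2_2}-\sigma_l^{r+\sum_i j_i}e^{-\sigma_l\Vert x-y\Vert^2_2}$ by a triangle inequality and applies the mean value theorem separately to the polynomial and exponential factors; both yield the same $L|\sigma-\sigma_l|$ bound and hence $\Cal{N}(\Cal{K}_\alpha,\rho_\alpha,\epsilon)\le C/\epsilon$.
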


\begin{pf}
Let
$\Cal{N}(\Sigma,\Vert\cdot\Vert_1,\tau)$ be the $\tau$-covering number of
$\Sigma$ and it is easy to verify that
\[
N(\tau):=\Cal{N}\bigl(\Sigma,\Vert\cdot\Vert_1,\tau\bigr)=\frac{a}{\tau}.
\]
Let
$\Sigma(\tau):=\{\sigma_1,\ldots, \sigma_{N(\tau)}\}$ be the $L^1$
cover of
$\Sigma$. Define $\widetilde{\Cal{K}_\alpha}:=\{\partial^{\alpha,\alpha}
\psi_\sigma(x-y), x,y\in
(a_0,b_0)^d, -\infty<a_0<b_0<\infty: \sigma\in\Sigma(\tau)\}$. Using the
expression for $\partial^{\alpha,\alpha}\psi_\sigma$ in (\ref
{Eq:partial-psi-1}),
we have
\begin{eqnarray*}
&&\bigl\llvert \partial^{\alpha,\alpha}\psi_{\sigma}-\partial^{
\alpha, \alpha}
\psi_ { \sigma_l}\bigr\rrvert (x-y)\\
&&\quad\le \sum
^{\alpha_1}_{j_1=0}\cdots\sum^{\alpha_d}_{j_d=0}A_{j_1\cdots
j_d}
\bigl| \sigma^{r+\sum^d_{i=1} j_i}\mathrm{e}^{-\sigma\Vert x-y\Vert^2_2}-\sigma^{r+\sum^d_{i=1}
j_i}_l\mathrm{e}^{-\sigma_l\Vert x-y\Vert^2_2} \bigr|,
\end{eqnarray*}
where $A_{j_1\cdots j_d}:=\prod^d_{i=1}
\llvert \eta_ { ij_i}\rrvert  (x_i-y_i)^{2j_i}\le
(b_0-a_0)^{2m}\prod^d_{i=1}
\llvert \eta_ { ij_i}\rrvert =:B_{j_1\cdots j_d}$. Note that
\[
C:=\bigl\llvert \sigma^{r+\sum^d_{i=1} j_i}\mathrm{e}^{-\sigma\Vert x-y\Vert^2_2}-\sigma^{r+\sum^d_{i=1}
j_i}_l\mathrm{e}^{-\sigma_l\Vert x-y\Vert^2_2}
\bigr\rrvert
\]
can be bounded as
\begin{eqnarray*}
C&\le& \bigl\llvert \sigma^{r+\sum^d_{i=1} j_i}-\sigma^{r+\sum^d_{i=1}
j_i}_l
\bigr\rrvert +a^{r+\sum^d_{i=1}
j_i}\bigl\llvert \mathrm{e}^{-\sigma\Vert x-y\Vert^2_2}-\mathrm{e}^{-\sigma_l\Vert
x-y\Vert^2_2}
\bigr\rrvert
\\
&\le& \Biggl(r+\sum^d_{i=1}j_i-1
\Biggr)a^{r+\sum^d_{i=1}j_i-1} \llvert \sigma-\sigma_l\rrvert
+a^{r+\sum^d_{i=1}j_i} \llvert \sigma-\sigma_l\rrvert \Vert x-y
\Vert^2_2 \mathrm{e}^{a\Vert x-y\Vert^2_2}
\\
&\le& (2r-1 )a^{2r-1} \llvert \sigma-\sigma_l\rrvert
+d(b_0-a_0)^2a^{2r}\llvert
\sigma-\sigma _l\rrvert \mathrm{e}^{
ad(b_0-a_0)^2}\le\mu\tau,
\end{eqnarray*}
where $\mu$ is a constant that depends on $a$, $a_0$, $b_0$,
$d$ and $r$. Therefore,
\[
\rho_\alpha\bigl(\partial^{\alpha,\alpha}\psi_\sigma,
\partial^{\alpha,\alpha
}\psi_{
\sigma_l}\bigr)\le\bigl\llVert
\partial^{\alpha,\alpha}\psi_{\sigma}-\partial^{\alpha,\alpha}
\psi_{\sigma_l}\bigr\rrVert _\infty\le \mu\tau\sum
^{\alpha_1}_{j_1=0}\cdots\sum^{\alpha_d}_{j_d=0}B_{j_1\cdots
j_d},
\]
which yields the result.
\end{pf}

\end{appendix}
\section*{Acknowledgements}
The work was carried out while the author was a Research Fellow in the
Statistical Laboratory, Department of Pure Mathematics and Mathematical
Statistics,
University of Cambridge. The author profusely thanks Richard Nickl for
many valuable comments and
insightful discussions. The author also thanks the associate editor and
two anonymous reviewers for their careful review and constructive
comments which significantly improved the manuscript.

%

\printhistory

\begin{thebibliography}{41}

\bibitem{Anthony-99}
\begin{bbook}[mr]
\bauthor{\bsnm{Anthony},~\bfnm{Martin}\binits{M.}} \AND
\bauthor{\bsnm{Bartlett},~\bfnm{Peter~L.}\binits{P.L.}}
(\byear{1999}).
\btitle{Neural Network Learning: Theoretical Foundations}.
\blocation{Cambridge}:
\bpublisher{Cambridge Univ. Press}.
\bid{doi={10.1017/CBO9780511624216}, mr={1741038}}
\end{bbook}
%
\bptok{imsref}%
\endbibitem

\bibitem{Aronszajn-50}
\begin{barticle}[mr]
\bauthor{\bsnm{Aronszajn},~\bfnm{N.}\binits{N.}}
(\byear{1950}).
\btitle{Theory of reproducing kernels}.
\bjournal{Trans. Amer. Math. Soc.}
\bvolume{68}
\bpages{337--404}.
\bid{issn={0002-9947}, mr={0051437}}
\end{barticle}
%
\bptok{imsref}%
\endbibitem

\bibitem{Bartlett-05}
\begin{barticle}[mr]
\bauthor{\bsnm{Bartlett},~\bfnm{Peter~L.}\binits{P.L.}},
\bauthor{\bsnm{Bousquet},~\bfnm{Olivier}\binits{O.}} \AND
\bauthor{\bsnm{Mendelson},~\bfnm{Shahar}\binits{S.}}
(\byear{2005}).
\btitle{Local {R}ademacher complexities}.
\bjournal{Ann. Statist.}
\bvolume{33}
\bpages{1497--1537}.
\bid{doi={10.1214/009053605000000282}, issn={0090-5364}, mr={2166554}}
\end{barticle}
%
\bptok{imsref}%
\endbibitem

\bibitem{Berg-84}
\begin{bbook}[mr]
\bauthor{\bsnm{Berg},~\bfnm{Christian}\binits{C.}},
\bauthor{\bsnm{Christensen},~\bfnm{Jens~Peter~Reus}\binits{J.P.R.}} \AND
\bauthor{\bsnm{Ressel},~\bfnm{Paul}\binits{P.}}
(\byear{1984}).
\btitle{Harmonic Analysis on Semigroups: Theory of Positive Definite and Related Functions}.
\bseries{Graduate Texts in Mathematics}
\bvolume{100}.
\blocation{New York}:
\bpublisher{Springer}.
\bid{doi={10.1007/978-1-4612-1128-0}, mr={0747302}}
\end{bbook}
%
\bptok{imsref}%
\endbibitem

\bibitem{Berlinet-04}
\begin{bbook}[mr]
\bauthor{\bsnm{Berlinet},~\bfnm{Alain}\binits{A.}} \AND
\bauthor{\bsnm{Thomas-Agnan},~\bfnm{Christine}\binits{C.}}
(\byear{2004}).
\btitle{Reproducing Kernel {H}ilbert Spaces in Probability and Statistics}.
\blocation{Boston, MA}:
\bpublisher{Kluwer Academic}.
\bid{doi={10.1007/978-1-4419-9096-9}, mr={2239907}}
\end{bbook}
%
\bptok{imsref}%
\endbibitem

\bibitem{Bickel-03}
\begin{barticle}[mr]
\bauthor{\bsnm{Bickel},~\bfnm{Peter~J.}\binits{P.J.}} \AND
\bauthor{\bsnm{Ritov},~\bfnm{Ya'acov}\binits{Y.}}
(\byear{2003}).
\btitle{Nonparametric estimators which can be ``plugged-in''}.
\bjournal{Ann. Statist.}
\bvolume{31}
\bpages{1033--1053}.
\bid{doi={10.1214/aos/1059655904}, issn={0090-5364}, mr={2001641}}
\end{barticle}
%
\bptok{imsref}%
\endbibitem


\bibitem{delaPena-99}
\begin{bbook}[mr]
\bauthor{\bsnm{de~la Pe{\~n}a},~\bfnm{V{\'{\i}}ctor~H.}\binits{V.H.}} \AND
\bauthor{\bsnm{Gin{\'e}},~\bfnm{Evarist}\binits{E.}}
(\byear{1999}).
\btitle{Decoupling: From Dependence to Independence}.
\bseries{Probability and Its Applications (New York)}.
\blocation{New York}:
\bpublisher{Springer}.
\bid{doi={10.1007/978-1-4612-0537-1}, mr={1666908}}
\end{bbook}
%
\bptok{imsref}%
\endbibitem

\bibitem{Devroye-85}
\begin{bbook}[mr]
\bauthor{\bsnm{Devroye},~\bfnm{Luc}\binits{L.}} \AND
\bauthor{\bsnm{Gy{\"o}rfi},~\bfnm{L{\'a}szl{\'o}}\binits{L.}}
(\byear{1985}).
\btitle{Nonparametric Density Estimation: The $L_{1}$ View}.
\bseries{Wiley Series in Probability and Mathematical Statistics: Tracts on Probability and Statistics}.
\blocation{New York}:
\bpublisher{Wiley}.
\bid{mr={0780746}}
\end{bbook}
%
\bptok{imsref}%
\endbibitem


\bibitem{Diestel-77}
\begin{bbook}[mr]
\bauthor{\bsnm{Diestel},~\bfnm{J.}\binits{J.}} \AND
\bauthor{\bsnm{Uhl},~\bfnm{J.~J.}\binits{J.J.} \bsuffix{Jr.}}
(\byear{1977}).
\btitle{Vector Measures}.
\blocation{Providence, RI}:
\bpublisher{Amer. Math. Soc.}
\bid{mr={0453964}}
\end{bbook}
%
\bptok{imsref}%
\endbibitem

\bibitem{Dudley-99}
\begin{bbook}[mr]
\bauthor{\bsnm{Dudley},~\bfnm{R.~M.}\binits{R.M.}}
(\byear{1999}).
\btitle{Uniform Central Limit Theorems}.
\bseries{Cambridge Studies in Advanced Mathematics}
\bvolume{63}.
\blocation{Cambridge}:
\bpublisher{Cambridge Univ. Press}.
\bid{doi={10.1017/CBO9780511665622}, mr={1720712}}
\end{bbook}
%
\bptok{imsref}%
\endbibitem

\bibitem{Dudley-02}
\begin{bbook}[mr]
\bauthor{\bsnm{Dudley},~\bfnm{R.~M.}\binits{R.M.}}
(\byear{2002}).
\btitle{Real Analysis and Probability}.
\bseries{Cambridge Studies in Advanced Mathematics}
\bvolume{74}.
\blocation{Cambridge}:
\bpublisher{Cambridge Univ. Press}.
\bid{doi={10.1017/CBO9780511755347}, mr={1932358}}
\end{bbook}
%
\bptok{imsref}%
\endbibitem

\bibitem{Folland-99}
\begin{bbook}[author]
\bauthor{\bsnm{Folland},~\bfnm{G.~B.}\binits{G.B.}}
(\byear{1999}).
\btitle{Real Analysis: Modern Techniques and Their Applications}.
\blocation{New York}:
\bpublisher{Wiley}.
\end{bbook}
%
\bptok{imsref}%
\endbibitem

\bibitem{Fukumizu-08a}
\begin{binproceedings}[author]
\bauthor{\bsnm{Fukumizu},~\bfnm{K.}\binits{K.}},
\bauthor{\bsnm{Gretton},~\bfnm{A.}\binits{A.}},
\bauthor{\bsnm{Sun},~\bfnm{X.}\binits{X.}} \AND
\bauthor{\bsnm{Sch{\"{o}}lkopf},~\bfnm{B.}\binits{B.}}
(\byear{2008}).
\btitle{Kernel measures of conditional dependence}.
In \bbooktitle{Advances in Neural Information Processing Systems 20}
(\beditor{\bfnm{J.~C.}\binits{J.C.}~\bsnm{Platt}},
\beditor{\bfnm{D.}\binits{D.}~\bsnm{Koller}},
\beditor{\bfnm{Y.}\binits{Y.}~\bsnm{Singer}} \AND
\beditor{\bfnm{S.}\binits{S.}~\bsnm{Roweis}}, eds.)
\bpages{489--496}.
\blocation{Cambridge, MA}:
\bpublisher{MIT Press}.
\end{binproceedings}
%
\bptok{imsref}%
\endbibitem

\bibitem{Fukumizu-08b}
\begin{binproceedings}[author]
\bauthor{\bsnm{Fukumizu},~\bfnm{K.}\binits{K.}},
\bauthor{\bsnm{Sriperumbudur},~\bfnm{B.~K.}\binits{B.K.}},
\bauthor{\bsnm{Gretton},~\bfnm{A.}\binits{A.}} \AND
\bauthor{\bsnm{Sch{\"{o}}lkopf},~\bfnm{B.}\binits{B.}}
(\byear{2009}).
\btitle{Characteristic kernels on groups and semigroups}.
In \bbooktitle{Advances in Neural Information Processing Systems 21}
(\beditor{\bfnm{D.}\binits{D.}~\bsnm{Koller}},
\beditor{\bfnm{D.}\binits{D.}~\bsnm{Schuurmans}},
\beditor{\bfnm{Y.}\binits{Y.}~\bsnm{Bengio}} \AND
\beditor{\bfnm{L.}\binits{L.}~\bsnm{Bottou}}, eds.)
\bpages{473--480}.
\blocation{Cambridge, MA}:
\bpublisher{MIT Press}.
\end{binproceedings}
%
\bptok{imsref}%
\endbibitem

\bibitem{Gine-08a}
\begin{barticle}[mr]
\bauthor{\bsnm{Gin{\'e}},~\bfnm{Evarist}\binits{E.}} \AND
\bauthor{\bsnm{Nickl},~\bfnm{Richard}\binits{R.}}
(\byear{2008}).
\btitle{Uniform central limit theorems for kernel density estimators}.
\bjournal{Probab. Theory Related Fields}
\bvolume{141}
\bpages{333--387}.
\bid{doi={10.1007/s00440-007-0087-9}, issn={0178-8051}, mr={2391158}}
\end{barticle}
%
\bptok{imsref}%
\endbibitem

\bibitem{Gine-08b}
\begin{barticle}[mr]
\bauthor{\bsnm{Gin{\'e}},~\bfnm{E.}\binits{E.}} \AND
\bauthor{\bsnm{Nickl},~\bfnm{R.}\binits{R.}}
(\byear{2008}).
\btitle{Adaptation on the space of finite signed measures}.
\bjournal{Math. Methods Statist.}
\bvolume{17}
\bpages{113--122}.
\bid{doi={10.3103/S1066530708020026}, issn={1066-5307}, mr={2429123}}
\end{barticle}
%
\bptok{imsref}%
\endbibitem

\bibitem{Gine-09b}
\begin{barticle}[mr]
\bauthor{\bsnm{Gin{\'e}},~\bfnm{Evarist}\binits{E.}} \AND
\bauthor{\bsnm{Nickl},~\bfnm{Richard}\binits{R.}}
(\byear{2009}).
\btitle{Uniform limit theorems for wavelet density estimators}.
\bjournal{Ann. Probab.}
\bvolume{37}
\bpages{1605--1646}.
\bid{doi={10.1214/08-AOP447}, issn={0091-1798}, mr={2546757}}
\end{barticle}
%
\bptok{imsref}%
\endbibitem

\bibitem{Gine-09a}
\begin{barticle}[mr]
\bauthor{\bsnm{Gin{\'e}},~\bfnm{Evarist}\binits{E.}} \AND
\bauthor{\bsnm{Nickl},~\bfnm{Richard}\binits{R.}}
(\byear{2009}).
\btitle{An exponential inequality for the distribution function of the kernel density estimator, with applications to adaptive estimation}.
\bjournal{Probab. Theory Related Fields}
\bvolume{143}
\bpages{569--596}.
\bid{doi={10.1007/s00440-008-0137-y}, issn={0178-8051}, mr={2475673}}
\end{barticle}
%
\bptok{imsref}%
\endbibitem

\bibitem{Gine-10}
\begin{barticle}[mr]
\bauthor{\bsnm{Gin{\'e}},~\bfnm{Evarist}\binits{E.}} \AND
\bauthor{\bsnm{Nickl},~\bfnm{Richard}\binits{R.}}
(\byear{2010}).
\btitle{Adaptive estimation of a distribution function and its density in sup-norm loss by wavelet and spline projections}.
\bjournal{Bernoulli}
\bvolume{16}
\bpages{1137--1163}.
\bid{doi={10.3150/09-BEJ239}, issn={1350-7265}, mr={2759172}}
\end{barticle}
%
\bptok{imsref}%
\endbibitem

\bibitem{Gine-86}
\begin{barticle}[mr]
\bauthor{\bsnm{Gin{\'e}},~\bfnm{Evarist}\binits{E.}} \AND
\bauthor{\bsnm{Zinn},~\bfnm{Joel}\binits{J.}}
(\byear{1986}).
\btitle{Empirical processes indexed by {L}ipschitz functions}.
\bjournal{Ann. Probab.}
\bvolume{14}
\bpages{1329--1338}.
\bid{issn={0091-1798}, mr={0866353}}
\end{barticle}
%
\bptok{imsref}%
\endbibitem

\bibitem{Gretton-06}
\begin{binproceedings}[author]
\bauthor{\bsnm{Gretton},~\bfnm{A.}\binits{A.}},
\bauthor{\bsnm{Borgwardt},~\bfnm{K.~M.}\binits{K.M.}},
\bauthor{\bsnm{Rasch},~\bfnm{M.}\binits{M.}},
\bauthor{\bsnm{Sch{\"{o}}lkopf},~\bfnm{B.}\binits{B.}} \AND
\bauthor{\bsnm{Smola},~\bfnm{A.}\binits{A.}}
(\byear{2007}).
\btitle{A kernel method for the two sample problem}.
In \bbooktitle{Advances in Neural Information Processing Systems 19}
(\beditor{\bfnm{B.}\binits{B.}~\bsnm{Sch{\"{o}}lkopf}},
\beditor{\bfnm{J.}\binits{J.}~\bsnm{Platt}} \AND
\beditor{\bfnm{T.}\binits{T.}~\bsnm{Hoffman}}, eds.)
\bpages{513--520}.
\blocation{Cambridge, MA}:
\bpublisher{MIT Press}.
\end{binproceedings}
%
\bptok{imsref}%
\endbibitem

\bibitem{Hardle-98}
\begin{bbook}[mr]
\bauthor{\bsnm{H{\"a}rdle},~\bfnm{Wolfgang}\binits{W.}},
\bauthor{\bsnm{Kerkyacharian},~\bfnm{Gerard}\binits{G.}},
\bauthor{\bsnm{Picard},~\bfnm{Dominique}\binits{D.}} \AND
\bauthor{\bsnm{Tsybakov},~\bfnm{Alexander}\binits{A.}}
(\byear{1998}).
\btitle{Wavelets, Approximation, and Statistical Applications}.
\bseries{Lecture Notes in Statistics}
\bvolume{129}.
\blocation{New York}:
\bpublisher{Springer}.
\bid{doi={10.1007/978-1-4612-2222-4}, mr={1618204}}
\end{bbook}
%
\bptok{imsref}%
\endbibitem

\bibitem{Lepski-97}
\begin{barticle}[mr]
\bauthor{\bsnm{Lepski},~\bfnm{O.~V.}\binits{O.V.}},
\bauthor{\bsnm{Mammen},~\bfnm{E.}\binits{E.}} \AND
\bauthor{\bsnm{Spokoiny},~\bfnm{V.~G.}\binits{V.G.}}
(\byear{1997}).
\btitle{Optimal spatial adaptation to inhomogeneous smoothness: An approach based on kernel estimates with variable bandwidth selectors}.
\bjournal{Ann. Statist.}
\bvolume{25}
\bpages{929--947}.
\bid{doi={10.1214/aos/1069362731}, issn={0090-5364}, mr={1447734}}
\end{barticle}
%
\bptok{imsref}%
\endbibitem

\bibitem{Marcus-85}
\begin{barticle}[mr]
\bauthor{\bsnm{Marcus},~\bfnm{David~J.}\binits{D.J.}}
(\byear{1985}).
\btitle{Relationships between {D}onsker classes and {S}obolev spaces}.
\bjournal{Z. Wahrsch. Verw. Gebiete}
\bvolume{69}
\bpages{323--330}.
\bid{doi={10.1007/BF00532737}, issn={0044-3719}, mr={0787601}}
\end{barticle}
%
\bptok{imsref}%
\endbibitem

\bibitem{Mendelson-02}
\begin{barticle}[mr]
\bauthor{\bsnm{Mendelson},~\bfnm{Shahar}\binits{S.}}
(\byear{2002}).
\btitle{Rademacher averages and phase transitions in {G}livenko--{C}antelli classes}.
\bjournal{IEEE Trans. Inform. Theory}
\bvolume{48}
\bpages{251--263}.
\bid{doi={10.1109/18.971753}, issn={0018-9448}, mr={1872178}}
\end{barticle}
%
\bptok{imsref}%
\endbibitem

\bibitem{Nickl-07}
\begin{barticle}[mr]
\bauthor{\bsnm{Nickl},~\bfnm{Richard}\binits{R.}}
(\byear{2007}).
\btitle{Donsker-type theorems for nonparametric maximum likelihood estimators}.
\bjournal{Probab. Theory Related Fields}
\bvolume{138}
\bpages{411--449}.
\bid{doi={10.1007/s00440-006-0031-4}, issn={0178-8051}, mr={2299714}}
\end{barticle}
%
\bptok{imsref}%
\endbibitem

\bibitem{Radulovic-00}
\begin{bincollection}[mr]
\bauthor{\bsnm{Radulovi{\'c}},~\bfnm{Dragan}\binits{D.}} \AND
\bauthor{\bsnm{Wegkamp},~\bfnm{Marten}\binits{M.}}
(\byear{2000}).
\btitle{Weak convergence of smoothed empirical processes: Beyond {D}onsker classes}.
In \bbooktitle{High Dimensional Probability, II ({S}eattle, WA, 1999)}.
\bseries{Progress in Probability}
\bvolume{47}
\bpages{89--105}.
\blocation{Boston, MA}:
\bpublisher{Birkh\"auser}.
\bid{mr={1857317}}
\end{bincollection}
%
\bptok{imsref}%
\endbibitem

\bibitem{Rudin-91}
\begin{bbook}[mr]
\bauthor{\bsnm{Rudin},~\bfnm{Walter}\binits{W.}}
(\byear{1991}).
\btitle{Functional Analysis},
\bedition{2nd} ed.
\bseries{International Series in Pure and Applied Mathematics}.
\blocation{New York}:
\bpublisher{McGraw-Hill}.
\bid{mr={1157815}}
\end{bbook}
%
\bptok{imsref}%
\endbibitem

\bibitem{Srebro-10}
\begin{binproceedings}[author]
\bauthor{\bsnm{Srebro},~\bfnm{N.}\binits{N.}},
\bauthor{\bsnm{Sridharan},~\bfnm{K.}\binits{K.}} \AND
\bauthor{\bsnm{Tewari},~\bfnm{A.}\binits{A.}}
(\byear{2010}).
\btitle{Smoothness, low noise and fast rates}.
In \bbooktitle{Advances in Neural Information Processing Systems 23}
(\beditor{\bfnm{J.}\binits{J.}~\bsnm{Lafferty}},
\beditor{\bfnm{C.~K.~I.}\binits{C.K.I.}~\bsnm{Williams}},
\beditor{\bfnm{J.}\binits{J.}~\bsnm{Shawe-Taylor}},
\beditor{\bfnm{R.~S.}\binits{R.S.}~\bsnm{Zemel}} \AND
\beditor{\bfnm{A.}\binits{A.}~\bsnm{Culotta}}, eds.)
\bpages{2199--2207}.
\bpublisher{MIT Press}.
\end{binproceedings}
%
\bptok{imsref}%
\endbibitem

\bibitem{Sriperumbudur-09c}
\begin{binproceedings}[author]
\bauthor{\bsnm{Sriperumbudur},~\bfnm{B.~K.}\binits{B.K.}},
\bauthor{\bsnm{Fukumizu},~\bfnm{K.}\binits{K.}},
\bauthor{\bsnm{Gretton},~\bfnm{A.}\binits{A.}},
\bauthor{\bsnm{Lanckriet},~\bfnm{G.~R.~G.}\binits{G.R.G.}} \AND
\bauthor{\bsnm{Sch{\"{o}}lkopf},~\bfnm{B.}\binits{B.}}
(\byear{2009}).
\btitle{Kernel choice and classifiability for RKHS embeddings of probability distributions}.
In \bbooktitle{Advances in Neural Information Processing Systems 22}
(\beditor{\bfnm{Y.}\binits{Y.}~\bsnm{Bengio}},
\beditor{\bfnm{D.}\binits{D.}~\bsnm{Schuurmans}},
\beditor{\bfnm{J.}\binits{J.}~\bsnm{Lafferty}},
\beditor{\bfnm{C.~K.~I.}\binits{C.K.I.}~\bsnm{Williams}} \AND
\beditor{\bfnm{A.}\binits{A.}~\bsnm{Culotta}}, eds.)
\bpages{1750--1758}.
\blocation{Cambridge, MA}:
\bpublisher{MIT Press}.
\end{binproceedings}
%
\bptok{imsref}%
\endbibitem

\bibitem{Sriperumbudur-12}
\begin{barticle}[mr]
\bauthor{\bsnm{Sriperumbudur},~\bfnm{Bharath~K.}\binits{B.K.}},
\bauthor{\bsnm{Fukumizu},~\bfnm{Kenji}\binits{K.}},
\bauthor{\bsnm{Gretton},~\bfnm{Arthur}\binits{A.}},
\bauthor{\bsnm{Sch{\"o}lkopf},~\bfnm{Bernhard}\binits{B.}} \AND
\bauthor{\bsnm{Lanckriet},~\bfnm{Gert~R.~G.}\binits{G.R.G.}}
(\byear{2012}).
\btitle{On the empirical estimation of integral probability metrics}.
\bjournal{Electron. J. Stat.}
\bvolume{6}
\bpages{1550--1599}.
\bid{doi={10.1214/12-EJS722}, issn={1935-7524}, mr={2988458}}
\end{barticle}
%
\bptok{imsref}%
\endbibitem

\bibitem{Sriperumbudur-11c}
\begin{binproceedings}[author]
\bauthor{\bsnm{Sriperumbudur},~\bfnm{B.~K.}\binits{B.K.}},
\bauthor{\bsnm{Fukumizu},~\bfnm{K.}\binits{K.}} \AND
\bauthor{\bsnm{Lanckriet},~\bfnm{G.}\binits{G.}}
(\byear{2011}).
\btitle{Learning in {H}ilbert vs. {B}anach spaces: A measure embedding viewpoint}.
In \bbooktitle{Advances in Neural Information Processing Systems~24}
(\beditor{\bfnm{J.}\binits{J.}~\bsnm{Shawe-Taylor}},
\beditor{\bfnm{R.~S.}\binits{R.S.}~\bsnm{Zemel}},
\beditor{\bfnm{P.}\binits{P.}~\bsnm{Bartlett}},
\beditor{\bfnm{F.~C.~N.}\binits{F.C.N.}~\bsnm{Pereira}} \AND
\beditor{\bfnm{K.~Q.}\binits{K.Q.}~\bsnm{Weinberger}}, eds.)
\bpages{1773--1781}.
\blocation{Cambridge, MA}:
\bpublisher{MIT Press}.
\end{binproceedings}
%
\bptok{imsref}%
\endbibitem

\bibitem{Sriperumbudur-11a}
\begin{barticle}[mr]
\bauthor{\bsnm{Sriperumbudur},~\bfnm{Bharath~K.}\binits{B.K.}},
\bauthor{\bsnm{Fukumizu},~\bfnm{Kenji}\binits{K.}} \AND
\bauthor{\bsnm{Lanckriet},~\bfnm{Gert~R.~G.}\binits{G.R.G.}}
(\byear{2011}).
\btitle{Universality, characteristic kernels and RKHS embedding of measures}.
\bjournal{J. Mach. Learn. Res.}
\bvolume{12}
\bpages{2389--2410}.
\bid{issn={1532-4435}, mr={2825431}}
\end{barticle}
%
\bptok{imsref}%
\endbibitem

\bibitem{Sriperumbudur-10a}
\begin{barticle}[mr]
\bauthor{\bsnm{Sriperumbudur},~\bfnm{Bharath~K.}\binits{B.K.}},
\bauthor{\bsnm{Gretton},~\bfnm{Arthur}\binits{A.}},
\bauthor{\bsnm{Fukumizu},~\bfnm{Kenji}\binits{K.}},
\bauthor{\bsnm{Sch{\"o}lkopf},~\bfnm{Bernhard}\binits{B.}} \AND
\bauthor{\bsnm{Lanckriet},~\bfnm{Gert~R.~G.}\binits{G.R.G.}}
(\byear{2010}).
\btitle{Hilbert space embeddings and metrics on probability measures}.
\bjournal{J. Mach. Learn. Res.}
\bvolume{11}
\bpages{1517--1561}.
\bid{issn={1532-4435}, mr={2645460}}
\end{barticle}
%
\bptok{imsref}%
\endbibitem

\bibitem{Steinwart-08}
\begin{bbook}[mr]
\bauthor{\bsnm{Steinwart},~\bfnm{Ingo}\binits{I.}} \AND
\bauthor{\bsnm{Christmann},~\bfnm{Andreas}\binits{A.}}
(\byear{2008}).
\btitle{Support Vector Machines}.
\bseries{Information Science and Statistics}.
\blocation{New York}:
\bpublisher{Springer}.
\bid{mr={2450103}}
\end{bbook}
%
\bptok{imsref}%
\endbibitem

\bibitem{Vaart-94}
\begin{barticle}[mr]
\bauthor{\bsnm{van~der Vaart},~\bfnm{Aad}\binits{A.}}
(\byear{1994}).
\btitle{Weak convergence of smoothed empirical processes}.
\bjournal{Scand. J. Stat.}
\bvolume{21}
\bpages{501--504}.
\bid{issn={0303-6898}, mr={1310093}}
\end{barticle}
%
\bptok{imsref}%
\endbibitem

\bibitem{Vaart-98}
\begin{bbook}[mr]
\bauthor{\bsnm{van~der Vaart},~\bfnm{A.~W.}\binits{A.W.}}
(\byear{1998}).
\btitle{Asymptotic Statistics}.
\bseries{Cambridge Series in Statistical and Probabilistic Mathematics}
\bvolume{3}.
\blocation{Cambridge}:
\bpublisher{Cambridge Univ. Press}.
\bid{doi={10.1017/CBO9780511802256}, mr={1652247}}
\end{bbook}
%
\bptok{imsref}%
\endbibitem

\bibitem{Vaart-96}
\begin{bbook}[mr]
\bauthor{\bsnm{van~der Vaart},~\bfnm{Aad~W.}\binits{A.W.}} \AND
\bauthor{\bsnm{Wellner},~\bfnm{Jon~A.}\binits{J.A.}}
(\byear{1996}).
\btitle{Weak Convergence and Empirical Processes}.
\bseries{Springer Series in Statistics}.
\blocation{New York}:
\bpublisher{Springer}.
\bid{doi={10.1007/978-1-4757-2545-2}, mr={1385671}}
\end{bbook}
%
\bptok{imsref}%
\endbibitem

\bibitem{Wendland-05}
\begin{bbook}[mr]
\bauthor{\bsnm{Wendland},~\bfnm{Holger}\binits{H.}}
(\byear{2005}).
\btitle{Scattered Data Approximation}.
\bseries{Cambridge Monographs on Applied and Computational Mathematics}
\bvolume{17}.
\blocation{Cambridge}:
\bpublisher{Cambridge Univ. Press}.
\bid{mr={2131724}}
\end{bbook}
%
\bptok{imsref}%
\endbibitem

\bibitem{Ying-10}
\begin{barticle}[mr]
\bauthor{\bsnm{Ying},~\bfnm{Yiming}\binits{Y.}} \AND
\bauthor{\bsnm{Campbell},~\bfnm{Colin}\binits{C.}}
(\byear{2010}).
\btitle{Rademacher chaos complexities for learning the kernel problem}.
\bjournal{Neural Comput.}
\bvolume{22}
\bpages{2858--2886}.
\bid{doi={10.1162/NECO-a-00028}, issn={0899-7667}, mr={2760540}}
\end{barticle}
%
\bptok{imsref}%
\endbibitem

\bibitem{Yukich-92}
\begin{barticle}[mr]
\bauthor{\bsnm{Yukich},~\bfnm{J.~E.}\binits{J.E.}}
(\byear{1992}).
\btitle{Weak convergence of smoothed empirical processes}.
\bjournal{Scand. J. Stat.}
\bvolume{19}
\bpages{271--279}.
\bid{issn={0303-6898}, mr={1183201}}
\end{barticle}
%
\bptok{imsref}%
\endbibitem

\end{thebibliography}
\end{document}